\renewcommand{\mid}{|}
\newcommand{\idotsint}{\mathop{\int\cdots\int}}
\newtheorem{theorem}{Theorem}[section]
\newtheorem{proposition}[theorem]{Proposition}
\newtheorem{lemma}[theorem]{Lemma}
\newtheorem{corollary}[theorem]{Corollary}
\newcommand{\Vol}{\operatorname{vol}}
\newcommand{\dint}{\mathrm{d}}
\def\BB{\mathbb{B}}
\def\CC{\mathbb{C}}
\def\EE{\mathbb{E}}
\def\HH{\mathbb{H}}
\def\LL{\mathbb{L}}
\def\NN{\mathbb{N}}
\def\PP{\mathbb{P}}
\def\RR{\mathbb{R}}
\def\SS{\mathbb{S}}
\def\TT{\mathbb{T}}
\def\ZZ{\mathbb{Z}}
\def\bC{\mathbf{C}}
\def\bD{\mathbf{D}}
\def\bG{\mathbf{G}}
\def\bN{\mathbf{N}}
\def\bP{\mathbf{P}}
\def\bPi{\bolds{\Pi}}
\def\bQ{\mathbf{Q}}
\def\br{r}
\def\bS{\mathbf{S}}
\def\bT{\mathbf{T}}
\def\cA{\mathcal{A}}
\def\cB{\mathcal{B}}
\def\cG{\mathcal{G}}
\def\cH{\mathcal{H}}
\def\cU{\mathcal{U}}
\def\cV{\mathcal{V}}
\def\sC{\mathscr{C}}
\def\sD{\mathscr{D}}
\def\sG{\mathscr{G}}
\def\sP{\mathscr{P}}
\def\sV{\mathscr{V}}
\def\1{\mathbh{1}}
\def\bpr{\bolds{\pi}}
\def\obpr{\bar{\bpr}}
\def\tpr{\tilde{\pi}}
\def\obC{\overline{\bC}}
\def\obP{\overline{\bP}}
\def\obQ{\overline{\bQ}}
\def\oBT{\overline{\BB\TT}}
\def\Liso{{\Lambda_{\mathrm{iso}}}}
\def\iso{\mathrm{iso}}
\def\loc{\mathrm{loc}}
\def\Prob{\operatorname{Prob}}
\def\inn{\mathrm{in}}
\def\out{\mathrm{out}}
\def\av{\mathrm{av}}
\def\col{\operatorname{col}}
\def\hor{\mathrm{hor}}
\def\vert{\mathrm{vert}}
\def\diam{\mathrm{diam}}
\def\reg{\mathrm{reg}}
\begin{document}
\begin{frontmatter}

\title{Branching random tessellations with interaction:
A~thermodynamic view}
\runtitle{Branching random tessellations}

\begin{aug}
\author[A]{\fnms{Hans-Otto} \snm{Georgii}\corref{}\ead[label=e2]{georgii@math.lmu.de}},
\author[B]{\fnms{Tomasz} \snm{Schreiber}\thanksref{T1}}
\and
\author[C]{\fnms{Christoph} \snm{Th\"ale}\ead[label=e3]{christoph.thaele@rub.de}}
\runauthor{H.-O. Georgii, T. Schreiber and C. Th\"ale}
\affiliation{Ludwig-Maximilians University Munich, Nicolaus Copernicus University Toru\'n and Ruhr University Bochum}
\address[A]{H.-O. Georgii\\
Mathematisches Institut\\
Ludwig-Maximilians-Universit\"{a}t\\
Theresienstra{\ss}e 39\\
80333 M\"{u}nchen\\
Germany\\
\printead{e2}}
\address[B]{T. Schreiber\\
Faculty of Mathematics\\
\quad and Computer Science\\
Nicolaus Copernicus University\\
ul. Chopina 12/18\\
87-100 Toru\'n\\
Poland} 
\address[C]{C. Th\"ale\\
Fakult\"at f\"ur Mathematik\\
Ruhr-Universit\"at Bochum\\
Universit\"atsstra\ss e 150\\
44801 Bochum\\
Germany\\
\printead{e3}}
\end{aug}
\thankstext{T1}{This work has been initiated by the second author,
Tomasz Schreiber.
Shortly before his untimely death of cancer at the age of 35, he
entrusted to the first
author a draft manuscript that introduced the general setup of this
paper and proposed a
study of relative entropy densities for BRTs. To keep his memory, we
have tried to
realise his intentions with suitable modifications of concepts and
results. HOG, CT.}

\received{\smonth{4} \syear{2013}}
\revised{\smonth{11} \syear{2013}}

%
\begin{abstract}
A branching random tessellation (BRT) is a stochastic process that
transforms a coarse initial tessellation of $\mathbb{R}^d$ into a
finer tessellation by means of random cell divisions in continuous
time. This concept generalises the so-called STIT tessellations, for
which all cells split up independently of each other. Here, we allow
the cells to interact, in that the division rule for each cell may
depend on the structure of the surrounding tessellation. Moreover, we
consider coloured tessellations, for which each cell is marked with an
internal property, called its colour. Under a suitable condition, the
cell interaction of a BRT can be specified by a measure kernel, the
so-called division kernel, that determines the division rules of all
cells and gives rise to a Gibbsian characterisation of BRTs. For
translation invariant BRTs, we introduce an ``inner'' entropy density
relative to a STIT tessellation. Together with an inner energy density
for a given ``moderate'' division kernel, this leads to a variational
principle for BRTs with this prescribed kernel, and further to an
existence result for such BRTs.
\end{abstract}

%
\begin{keyword}[class=AMS]
\kwd[Primary ]{60D05}
\kwd{60K35}
\kwd[; secondary ]{28D20}
\kwd{60G55}
\kwd{82B21}
\end{keyword}
\begin{keyword}
\kwd{Branching tessellation}
\kwd{coloured tessellation}
\kwd{free energy}
\kwd{Gibbs measure}
\kwd{relative entropy}
\kwd{STIT tessellation}
\kwd{stochastic geometry}
\kwd{variational principle}
\end{keyword}

\end{frontmatter}

\section{Introduction}

A central object of stochastic geometry and spatial stochastics are
tessellations of $\RR^d$ (with $d\geq1$), that is, locally finite
families of \mbox{$d$-}dimensional convex polytopes that cover $\RR^d$ and
have pairwise disjoint interiors. They are used in many practical
applications. For example, random tessellations serve as models for
cellular or polycrystalline materials, plant cells or influence zones,
for instance, in the modelling of telecommunication networks or animal
territories; see \cite{OkaBooSugChi00,SKM} for an overview.

The standard random tessellations usually considered in the literature
are the Poisson hyperplane tessellations, the Poisson--Voronoi and the
Poisson--Delaunay tessellations; cf.~\cite{SW} for definitions. These
have the property of being facet-to-facet (or side-to-side in the
planar case), which is to say that the intersection of any two of its
cells is either empty or a {common} face of {both} cells. However,
there are numerous applications for which models of this kind are
inappropriate, for example, network models for telecommunication
systems or models for crack structures in geology. Hence, there is a
growing demand for mathematically tractable models of
nonfacet-to-facet tessellations, which may serve as idealised
reference models. Only some years ago, the class of iteration-stable
random tessellations (called STIT tessellations for short) was
introduced by Nagel and Wei\ss\ in \cite{NW05}. These tessellations
are constructed by means of a temporal random process of cell division,
and thus live in space--time. They
have attracted considerable interest because of its analytical
tractability; see, for example, \cite{RedenbachThaele,ST2,ST3,ST7,ST4,ST5,ST6} or~\cite{TWN}.

Our objects of study here generalise the STIT models in two respects.
On the one hand, we consider coloured tessellations, for which each
cell is equipped with an individual colour. For example, the colour of
a cell could represent its nutrient content, its genotype, age, or
whatever else might be relevant to describe the state of a cell. (In a
different context, coloured tessellations have been studied by Arak and
Surgailis \cite{ArakSurgailis89,ArakSurgailis91}, e.g.)
On the other hand, and more importantly, we allow for an \emph
{interaction of cells} during their division process.
That is, our objects of interest can be viewed in two ways that are
equivalent but deal differently with space--time: either
\begin{itemize}[--]
\item[--] as Gibbsian spatial systems of interacting branching processes of
coloured cells,~or
\item[--] as temporal processes of tessellations in space.
\end{itemize}
The latter viewpoint can informally be described as follows.
At time zero, one starts with an initial
random tessellation of $\RR^d$ into coloured cells. Each cell lives
for a random time, which is determined by an interactive competition of
cells. Namely, the survival rate of a cell $c$ at any time $s>0$ may
not only depend on the cell's geometry and colour, but in fact on the
whole tessellation including its past evolution. When the lifetime has
run out, a hyperplane with coloured half-spaces is chosen randomly
according to some rule that may again depend
on the cell's geometry, colour and the past evolution of the surrounding
tessellation, and is used to cut $c$ into two polyhedral sub-cells
$c^+$ and $c^-$,
which inherit their colours from the respective half-spaces of the cutting
hyperplane. The daughter cells $c^+$~and~$c^-$ then replace $c$ in the
collective
division game, which is continued until time $1$, say. The resulting
tessellation of $\RR^d$ at a deterministic time $s\in[0,1]$ is
denoted by $T_{s}$, and the tessellation-valued stochastic process
$(T_s)_{s\in[0,1]}$ is what we call a \emph{branching random
tessellation} or BRT for short. The rule determining the splitting of
cells is given by a measure kernel, which will be called the associated
\emph{division kernel}.

In the special case when (i) the distribution of lifetimes is
exponential with parameter proportional to the mean width of the cells,
and (ii) the bi-coloured hyperplanes are chosen at random according to
the motion-invariant hyperplane measure and some reference measure on
the colour space, $(T_{s})_{s\in[0,1]}$ is a coloured STIT
tessellation of $\RR^d$ and its distribution is invariant under rigid
motions whenever so is the initial random tessellation. The coloured
STIT tessellations play an important role in the background of our
theory, in a way which is conceptually similar to that of the Poisson
point processes in the theory of Gibbsian point processes.

Let us note that the Gibbsian viewpoint, for which the BRTs are
considered as interacting branching processes of coloured cells,
parallels the Gibbsian treatment of interacting particle systems and
interacting diffusions developed in
\cite{CattiauxRZ,DaiPra,DaiPraRZ,Deuschel}, for example.
Let us also mention that different tessellation models with cell
interaction, namely Delaunay or Voronoi tessellations of Gibbsian type
(which undergo no time evolution), are studied in \cite
{BertinBilliotDrouilhet,DereudreDrouilhetGeorgii,DereudreGeorgii}.

The main results of this paper are the following.
%
\begin{itemize}[--]
\item[--] To begin, we discuss how the intuitive concept of ``cell
interaction'' that governs a BRT $\bP$ can be specified by a so-called
\emph{division kernel} $\Phi$.
We show that such a $\Phi$ can equivalently be used in two different
ways: either as the collection of instantaneous splitting rates of all
cells during their joint time evolution, or in the Gibbsian way, as a
means to determine the conditional distribution of the behaviour of all
cells within any bounded window when that of all other cells is given.
A third equivalent use of $\Phi$ involves a Campbell-like formula for
the jump intensity measure of $\bP$. We show further that a measure
kernel $\Phi$ as above exists as soon as $\bP$ satisfies a condition
of local absolute continuity (LAC) relative to a STIT model.

\item[--] We then turn to a kind of thermodynamic formalism for BRTs $\bP$
that are invariant under spatial translations.
The basic quantity is an inner entropy density $h^\inn(\bP)$, which
is defined as the limit of a conditional entropy per unit volume of~$\bP$ relative to a reference STIT model. The adjective ``inner'' refers
to the fact that only the cells completely inside the respective window
are taken into account, rather than all cells that hit the window. The
functional $h^\inn$ will be shown to share some familiar properties of
the entropy densities for the standard models of statistical mechanics,
at least with some natural adaptations.

\item[--] Finally, we consider an arbitrary division kernel $\Psi$ that
satisfies some mild regularity conditions, which roughly require that
$\Psi$ is not too far from a STIT kernel; such a $\Psi$ will be
called moderate. We introduce an associated inner energy density
$u^\inn(\bP;\Psi)$ as well as some sort of pressure $v^\inn(\bP;\Psi)$. The resulting inner excess free energy density $h^\inn(\bP;\Psi)$ gives rise to a variational principle, which states that the
minimisers of $h^\inn( \cdot;\Psi)$ are precisely the translation
invariant BRTs that admit $\Psi$ as their division kernel. It is
further shown that such minimisers do exist, for any prescribed
distribution $P$ of the time-zero tessellation. This proves the
existence of a BRT $\bP$ for any given initial distribution $P$ and
any moderate division kernel $\Psi$. For general $\Psi$, such a $\bP
$ is not necessarily unique.
\end{itemize}

The paper is organised as follows: Section~\ref{secPreliminaries}
introduces the setup and recalls some necessary facts.
Besides tessellations and BRTs, the main concepts are division kernels
and local conditional BRTs of Gibbsian type. This section also \mbox{includes}
some examples of division kernels to which our theory applies. The main
results together with their framework are stated in Section~\ref
{secResults}. These are Theorems~\ref{thmequivalence} and~\ref{thmexPhi} on the significance and existence of global division
kernels, Theorems~\ref{thmentropy} and~\ref{thmlevelsets} on the
existence of the inner entropy density and its properties, and Theorems
\ref{thmvarprinciple} and~\ref{thmexistence} on the variational
characterisation and the existence of invariant BRTs with given
moderate division kernels. All proofs are collected in the final
Section~\ref{ProofsSec}.

\section{Preliminaries}\label{secPreliminaries}

\subsection{Tessellations}\label{SecTessellations}

\subsubsection{Polytopes and tessellations}

Consider the Euclidean space ${\RR}^d$ of arbitrary dimension $d\ge
1$. We shall deal with certain random processes of coloured
tessellations\vspace*{1pt} of ${\RR}^d$ into (coloured) convex polytopes. Let us
specify these terms. First, a \emph{polytope} $p$ in ${\RR}^d$ is the
closed convex hull of a finite set of points and is always assumed to
have nonempty interior; the set of all such polytopes is denoted by
$\PP$. Each polytope $p\in\PP$ is equipped with a translation
covariant selector $m(p)$, called its ``\textit{centre}'' or ``\textit
{midpoint}'', for example, its barycentre, its Steiner point or its
circumcentre. We write $r(p)= \max_{x\in p}|x-m(p)|$ for its \emph
{radius} and $\partial p$ and $\operatorname{int}(p)$ for its topological
\emph{boundary}, respectively, \emph{interior}.

More generally, we will assume that each polytope is marked with some
internal property, called its \emph{colour}. So, we fix an arbitrary
Polish space $\Sigma$, which we call the \emph{colour space}. A
coloured polytope, called \textit{cell} in the sequel, is a pair
$c=(p,\sigma)$ with $p\in\PP$ and $\sigma\in\Sigma$. Let us
denote by
$\mathrm{sp}(c):=p$ and $\col(c):=\sigma$, respectively, the spatial
part and the colour of~$c$. The space of cells is thus $\CC:=\PP
\times\Sigma$. To simplify notation, \emph{we adopt the general
convention that spatial operations on cells} (\emph{and also on coloured
tessellations defined below}), \emph{such as intersections with subsets of
$\RR^d$ and translations}, \emph{solely refer to the spatial part and do not
affect their colours}. For example, $m(c):=m(\mathrm{sp}(c))$,
$r(c):=r(\mathrm{sp}(c))$, $\operatorname{int}(c):= \operatorname{int}(\mathrm
{sp}(c))$, $c\cap W:=(\mathrm{sp}(c)\cap W,\col(c))$ for $W\subset
\RR^d$, and $c-x:=(\mathrm{sp}(c)-x,\col(c))$ when $x\in\RR^d$. Finally,
$\Vol(c):=\Vol_d(\mathrm{sp}(c))$ is the ($d$-dimensional) volume of
the spatial part of $c$. Let us also define the space $\CC_0=\{c\in
\CC\dvtx m(c)=0\}$ of cells having their midpoint at the origin.

The cells are the constituents of the coloured tessellations which we
introduce now; for brevity we will omit the adjective ``coloured'' in the
following. (Note that letting $\Sigma$ be a singleton one recovers the
uncoloured case usually considered in the literature; cf.~\cite{SW,SKM}.)

\begin{definition}\label{deftessellation}
A (coloured) \emph{tessellation} $T$ of $\RR^d$ is a countable subset
of~$\CC$ such that:
\begin{itemize}
\item$T$ is locally finite, in that any bounded subset of $\RR^d$
only hits a finite number of cells from $T$,
\item two distinct cells of $T$ have disjoint interiors, that is,
$\operatorname{int}(c)\cap\operatorname{int}(c')=\varnothing$ for all $c,c'\in T$
with $c\neq c'$,
\item the cells cover the whole space, which is to say that $\bigcup_{c\in T}c=\RR^d$.
\end{itemize}
The space of all tessellations of $\RR^d$ will henceforth be denoted
by $\TT$.
\end{definition}

Besides tessellations of $\RR^d$, we will also consider tessellations
in local windows $W\subset\RR^d$, which will generally be chosen to
be polytopes, or sometimes also finite unions of polytopes.
So, we write $\PP_\cup$ for the set of all finite, not necessarily
connected unions of polytopes,
and for $W\in\PP_\cup$ we let $\CC_W$ be the set of cells that are
contained in $W$. We finally write $\TT_W$ for the set of all
tessellations of $W$, that is, of all finite collections $\{c_1,\ldots,c_n\}$ of cells with pairwise disjoint interiors and such that
$c_1\cup\cdots\cup c_n=W$.

\subsubsection{Measurability}

We need measurable structures on all spaces introduced above. We start
with the space $\PP$ of polytopes. As the sets in $\PP$ are compact
and nonempty, the natural metric on $\PP$ is the usual Hausdorff
distance $d_H$; cf. \cite{SW}, Chapter~12.3. Hence, the space $\PP$
can be equipped with the Borel $\sigma$-field $\cB(\PP)$ induced by $d_H$.
In fact, $\cB(\PP)$ is generated by the sets $\{p\in\PP\dvtx p\cap B\ne
\varnothing\}$ with $B\in\cB(\RR^d)$, the Borel $\sigma$-field on
$\RR
^d$; see \cite{SW}, Chapters~12.2--12.3.
The coloured counterpart $\CC$ is endowed with the product $\sigma$-field
$\cB(\CC)=\cB(\PP)\otimes\cB(\Sigma)$, where $\cB(\Sigma)$ is
the Borel $\sigma$-field on $\Sigma$. The space $\CC_0$ of centred cells
receives the trace $\sigma$-field.

We next need to introduce a suitable $\sigma$-field on $\TT$. As is usual
in point process theory, we let $\cB(\TT)$ be the $\sigma$-field
generated by the counting variables
%
\begin{equation}
\label{eqNA} N_A\dvtx \TT\rightarrow\NN\cup\{+\infty\},\qquad T\mapsto|T
\cap A|,\qquad A\in\cB(\CC),
\end{equation}
where $| \cdot|$ stands for the cardinality of the argument set,
that is, $N_A$ counts how many cells of $T$ belong to $A$. In view of
the structure of $\cB(\CC)$, $\cB(\TT)$ is also generated by the
random variables
\[
N_{B,S}\dvtx \TT\ni T\mapsto\bigl| \bigl\{c\in T\dvtx c\cap B\neq
\varnothing, \col(c) \in S \bigr\}\bigr|,
\]
with $B$ a bounded Borel set in $\RR^d$ and $S\in\cB(\Sigma)$.
Moreover, $\cB(\TT)$ is the Borel $\sigma$-field for the vague topology
on $\TT$, which is generated by the functions
\[
e_g\dvtx \TT\rightarrow[0,\infty),\qquad T\mapsto\sum
_{c\in T}g(c),
\]
where $g\geq0$ is a continuous function on $\CC$ with a bounded
support in the spatial coordinate; see \cite{KRM}, Appendix 15.7, or
\cite{KP}, Theorem~A2.3.

To deal with local properties of tessellations, we will often restrict
a tessellation to a local window $W\in\PP$.
We thus define the \emph{projection} to such a $W$ by
%
\begin{equation}
\label{pr} \pi_W\dvtx \TT\to\TT_W,\qquad T\mapsto
T_W:= \bigl\{c\cap W\dvtx c\in T, \operatorname{int}(c\cap W)\ne
\varnothing \bigr\}.
\end{equation}
In the same manner as above, we may introduce a $\sigma$-field $\cB
(\TT
_W)$ on $\TT_W$. One can then easily check that the mapping $\pi_W$
is measurable.

The culminating concept of this subsection is the following.

%
\begin{definition}
A probability measure $P$ on $(\TT,\cB(\TT))$ satisfying the
first-moment condition
$\int P(\dint T) |T_{W}| <\infty$ for all windows $W\in\PP$
is called a \emph{random tessellation}. The set of all such $P$ is
denoted by $\sP(\TT)$.
\end{definition}

\subsection{Branching tessellations}

\subsubsection{Cutting cells by hyperplanes}

We now turn to the main objects of our investigation: tessellations
which arise from a given initial tessellation by a successive splitting
of cells into two pieces by means of suitable hyperplanes. Recall that
a hyperplane $\eta$ with unit normal $u\in\SS_+^{d-1}$ (upper unit
half-sphere) and signed distance $r\in\RR$ to the origin can be
written in the form $\eta= \{ x \in\RR^d\dvtx  \langle x, u \rangle= r \}
$, where $\langle\cdot, \cdot\rangle$ stands for the usual
scalar product.\vspace*{1pt} So, the space of hyperplanes can be identified with
$\mathbb{S}^{d-1}_+ \times\RR$.
For $\eta$ as above, we write $\eta^+ = \{ x \in\RR^d\dvtx  \langle x, u
\rangle\ge r \}$ and $\eta^- = \{ x \in\RR^d\dvtx  \langle x, u \rangle
\le r \}$ for the associated half-spaces. More\vspace*{1pt} generally, we consider
\emph{bi-coloured hyperplanes} $H=(\eta,\sigma^+,\sigma^-)\in\HH:=\SS
^{d-1}_+\times\RR\times\Sigma^2$, for which each of the half-spaces
$\eta^\pm$ is equipped with a colour $\sigma^\pm$. We write
$\mathrm{sp}(H):=\eta$ and $\col^\pm(H):=\sigma^\pm$, respectively, for the
spatial part and the colours of $H$ and again \textit{adopt the convention
that spatial operations with bi-coloured hyperplanes only refer to the
spatial part}, for example, $c\cap H:=\mathrm{sp}(c)\cap\mathrm
{sp}(H)$ or $c\cap H^\pm:=(\mathrm{sp}(c)\cap\mathrm{sp}(H)^\pm,\col^\pm(H))$ for any $c\in\CC$. Moreover, for such a cell $c$, we let
%
\begin{equation}
\label{[c]} \langle c\rangle= \bigl\{H\in\HH\dvtx H\cap\operatorname{int}(c)
\ne \varnothing \bigr\}
\end{equation}
be the set of all bi-coloured hyperplanes which hit the interior of
(the spatial part of) $c$.
Each bi-coloured hyperplane $H$ defines a cell division operation
$\oslash$ on tessellations.
Namely, let $T\in\TT$, $c\in T$ and $H\in\langle c\rangle$. Then
$\oslash
$ is defined by
%
\begin{equation}
\label{oslash} \oslash_{c,H}(T):= \bigl(T\setminus\{c\} \bigr)\cup \bigl
\{c\cap H^+,c\cap H^- \bigr\}
\end{equation}
with $c\cap H^\pm$ as above.
Branching tessellations are now defined as follows. For simplicity, the
time interval will mostly be the unit interval $[0,1]$.

\begin{definition}\label{defBT}
(a) Let $W\in\PP_\cup$ be a finite union of polytopes. A \emph
{branching tessellation in the window $W$}
with bounded time interval $I=[a,b)$ or $[a,b]$ is a family $\bT
=(T_s)_{s\in I}$ of tessellations in $W$ such that:
\begin{itemize}
\item the function $s\mapsto T_s$ from $I$ to $\TT_W$ is piecewise
constant, right-continuous and has only a finite number of jumps,
\item at each point $s$ of discontinuity (so that $T_s\ne T_{s-}:=\lim_{r\uparrow s}T_r$), there exists a unique cell
$c\in T_{s-}$ and a bi-coloured hyperplane $H\in\langle c\rangle$
such that
\[
T_s = \oslash_{c,H}(T_{s-}).
\]
\end{itemize}
Further, $T_a$ is called the initial tessellation.
We write $\BB\TT_W$ for the set of all such branching tessellations
in $W$.

(b) A family $\bT=(T_s)_{0\leq s\le1}$ is called a \emph{branching
tessellation in $\RR^d$}
if for each window $W\in\PP$ the restricted process
$\bT_W=\bpr_W(\bT):= (\pi_W(T_s) )_{0\leq s\le1} $
is a branching tessellation in $W$.
Again, $T_0$ is then called the initial tessellation of $\bT$.
The set of all branching tessellations in $\RR^d$ is denoted by $\BB
\TT$.
\end{definition}

The following remark provides a further way of describing the time
evolution of a branching tessellation.

%
\begin{remark}\label{remBT}
(a) Let $\bT$ be a branching tessellation in a window $W\in\PP_\cup
$ with time interval $I=[0,1]$. (The case of other time intervals is similar.)
Keeping record of all jump times of $\bT$ together with the associated
cells that are divided and the respective cutting hyperplanes, one
arrives at the set
%
\begin{eqnarray}\label{division}
\sD(\bT) &=& \bigl\{(s,c,H)\in(0,1]\times\CC\times\HH\dvtx
T_{s-}\ne T_s,
\nonumber\\[-8pt]\\[-8pt]
&&\hspace*{20pt} c\in T_{s-}, H\in\langle c\rangle, T_s = \oslash
_{c,H}(T_{s-}) \bigr\}\nonumber
\end{eqnarray}
of all ``division events''. There is a one-to-one correspondence between
$\bT$ and the pair $(T_0,\sD(\bT))$, in that $\bT$ can be recovered
from the initial tessellation $T_0$ and the set $\sD(\bT)$ of
division events.
Indeed, labelling the elements of $\sD(\bT)$ with the indices
$1,\ldots,n:= |\sD(\bT)|$ according to the order of their time
coordinates so that $0=:s_0<s_1< \cdots<s_n\le s_{n+1}:=1$, one has
the recursion $T_s=T_0$ for $s\in[0,s_1)$ and
\[
T_s= \oslash_{c_i,H_i}(T_{s_{i-1}})\qquad\mbox{for $s\in
[s_i,s_{i+1})$, $i=1,\ldots,n$.}
\]
Finally, $T_1=T_{s_n}$.

This description also gives rise to a convenient way of visualising
$\bT$ as a graph in $[0,1]\times\CC_W$; see Figure~\ref{Tree}. The
set of vertices is
\[
\sV(\bT)= \bigl\{(0,c)\dvtx c\in T_0 \bigr\}\cup \bigl\{ \bigl(s,c
\cap H^\pm \bigr)\dvtx (s,c,H)\in\sD (\bT) \bigr\}.
\]
Moreover, each $(s,c)\in\sV(\bT)$ is equipped with a ``lifeline''
$[s,s^*)\times\{c\}$, where
$s^*=s'$ if $(s',c,H)\in\sD(\bT)$ for some $s'>s$ and $H\in\langle
c\rangle$,
and $s^*=1$ otherwise. If $s^*<1$, this lifeline is augmented by the
lines from $(s^*,c)$ to the two children
$(s^*,c\cap H^\pm)$ of $(s,c)$.
If $s^*=1$, the half-open line $[s,1)\times\{c\}$ is replaced by the
closed line $[s,1]\times\{c\}$.
In this way, one obtains a finite forest of binary ``family'' trees in
$\CC_W$ that evolve from the cells of $T_0$. So, these cells are
the roots, or ancestors, and the $|T_0|+|\sD(\bT)|$ leaves form the
tessellation $T_1$. This branching mechanism is strongly reminiscent of
the fragmentation processes considered in~\cite{Bertoin}.
%
\begin{figure}

\includegraphics{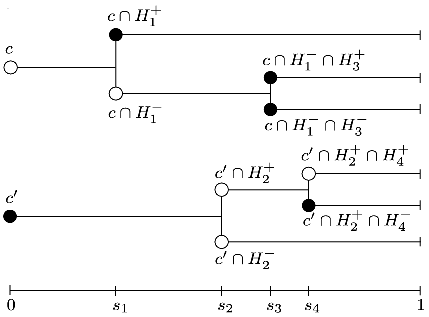}

\caption{Representation of a two-coloured branching tessellation in a
finite window with initial tessellation $T_0=\{c,c'\}$. The cells
living at a time $s$ constitute a tessellation $T_s$. At each time
$s_i$, a~cell that lives up to this moment is selected and cut in two
by a bi-coloured hyperplane $H_i$, which impresses its colours onto the
cell's pieces.}\label{Tree}
\end{figure}

(b) Branching tessellations in the whole space $\RR^d$ admit a similar
description in terms of division events. For each $\bT\in\BB\TT$,
we can then define
%
\begin{equation}
\label{eqglobaldivision} \sD(\bT)=\bigcup_{V\in\PP}\, \bigcap
_{W\in\PP\dvtx W\supset V}\sD (\bT_W).
\end{equation}
Conversely, for each $W\in\PP$ one can recover the division events in
$W$ from $\sD(\bT)$ via
\[
\sD(\bT_W)= \bigl\{(s,c\cap W,H)\dvtx (s,c,H)\in\sD(\bT), H\in
\langle c \cap W\rangle \bigr\}.
\]
It follows that $\bT$ is uniquely determined by $T_0$ and $\sD(\bT)$,
and $\bT$ can be regarded as a forest of infinitely many finite binary
family trees of coloured cells, the roots of which correspond to the
cells of the initial tessellation $T_0$ of $\RR^d$.
\end{remark}

Later on, it will be essential for us to keep track of the past of a
branching tessellation. So, instead of considering the evolution $\bT
=(T_s)_{0\leq s\leq1}$ in $\TT$, we will consider the process $(\bT
_s)_{0\le s\le1}$ in $\BB\TT$, which is given by $\bT_s=(T_u)_{0\le
u\le s}$.
Equivalently, $\bT_s$ can be thought of as being obtained from $\bT$
by removing from
$\sD(\bT)$ all elements
with time-coordinate larger than $s$. In this way, each $\bT_s$ can be
considered to be an element of $\BB\TT$, which is frozen at time $s$
(and thus remains constant thereafter). The set of all such branching
tessellations is denoted by $\BB\TT_s$. In particular, $\BB\TT
_1=\BB\TT$, and $\BB\TT_u\subset\BB\TT_s$ when $u<s$. We write
%
\begin{equation}
\label{eqbprs} \bpr_s\dvtx \BB\TT\to\BB\TT_s,\qquad \bT\mapsto
\bT_s,
\end{equation}
for the natural projection that removes the division events after time $s$.
As before, the nonbold $T_s$ stands for the tessellation at time $s$,
whereas a bold $\bT_s$ stands for an element of $\BB\TT_s$.

Besides this projection concerning time, we have also the projection to
a spatial window $W\in\PP$, which is given by
%
\begin{eqnarray}\label{eqbprW}
\bpr_W\dvtx \BB\TT\to\BB\TT_W,\qquad \bT
\mapsto \bT_W=(T_{W,s})_{0\le
s\le1}
\nonumber\\[-9pt]\\[-9pt]
\eqntext{\mbox{with } T_{W,s}= \pi_W(T_s),}
\end{eqnarray}
where $\BB\TT_W=\bpr_W(\BB\TT)$ and $\pi_W$ is as in (\ref{pr}).
We also write $\bpr_{W,s}=\bpr_W\circ\bpr_s$, $\bT_{W,s}=\bpr
_{W,s}(\bT)$ and $\BB\TT_{W,s}=\bpr_{W,s}(\BB\TT)$. So, to obtain
$\bT_{W,s}$ from $\bT$ one has to remove from $\sD(\bT)$ all
division events with a time coordinate exceeding $s$ or a hyperplane
not hitting the cell's intersection with $W$.

\subsubsection{Branching random tessellations}

Our main objects of interest are probability measures on $\BB\TT$.
So, we need to equip $\BB\TT$ with a $\sigma$-field. We know from Remark
\ref{remBT} that each $\bT\in\BB\TT$ is uniquely determined by
its initial tessellation $T_0$ together with the set
$\sD(\bT)$ of division events as given by (\ref{division}) and
(\ref{eqglobaldivision}). Since $\sD(\bT)$ is a locally finite
subset of $(0,1]\times\CC\times\HH$, one can proceed as usually in
point process theory by defining $\cB=\cB(\BB\TT)$ as the smallest
$\sigma$-field for which the counting variables
%
\begin{equation}
\label{bNA,B}\bN_{A,B}\dvtx \bT\mapsto|T_0\cap A| + \bigl|\sD(
\bT) \cap B\bigr|
\end{equation}
with $A\in\cB(\CC)$ and $B\in\cB((0,1])\otimes\cB(\CC)\otimes
\cB(\HH)$
are measurable; here $\cB((0,1])$ denotes the Borel $\sigma$-field on $(0,1]$.
By standard theory, $(\BB\TT,\cB)$ is a Borel space.
For any window $W\in\PP$, we define a $\sigma$-field $\cB_W=\cB
(\BB
\TT_W)$ on $\BB\TT_W$ in the same way.
To simplify notation, we will not distinguish between the $\sigma$-field
$\cB_W$ on $\BB\TT_W$ and its pre-image $\bpr_W^{-1}\cB_W$ on $\BB
\TT$, which will be denoted by the same symbol. Anyway, with these
definitions it is
clear that both the projection $\bpr_W$ in (\ref{eqbprW})
and the time restriction map $\bpr_\bullet\dvtx (s,\bT)\mapsto\bT_s$ of
(\ref{eqbprs}) are measurable.

%
\begin{definition}
A \emph{branching random tessellation} (\emph{BRT}) of $\RR^d$ is a
probability measure $\bP$ on $(\BB\TT,\cB)$ satisfying
the first-moment condition
%
\begin{equation}
\label{eqfirstmoment} \int\bP(\dint\bT) |T_{W,1}| <\infty\qquad\mbox{for all
windows }W\in\PP.
\end{equation}
The set of all such BRTs of $\RR^d$ is denoted by $\sP=\sP(\BB\TT
)$. BRTs within a window $W\in\PP_\cup$ are defined analogously.
\end{definition}

For every $\bP\in\sP$ and any of the projections $\bpr_\ast$ in
(\ref{eqbprs}) and (\ref{eqbprW}), we write $\bP_\ast=\bP\circ
\bpr_\ast^{-1}$ for the image of $\bP$ under $\bpr_\ast$.
In particular, each $\bP_s$ is a BRT. In fact, one can achieve that
$\bP_s$ depends measurably on $s$, in that the mapping $[0,1]\times
\cB\ni(s,A)\mapsto\bP_s(A)$ is a probability kernel, as will be
assumed throughout the following.
This can be seen by disintegrating the measure
%
\begin{equation}
\label{obP}\obP:=\int_0^1\dint s\int\bP(
\dint\bT ) \delta_{(s,\bT_s)}
\end{equation}
on
$
\oBT:= \{(s,\bT_s)\dvtx  s\in[0,1], \bT_s\in\BB\TT_s \}
$;
cf. \cite{KRM}, Appendix 15.3. Later on, we will also consider the projections
$\obpr_W=\mathrm{id}\otimes\bpr_W$ that act on the second
coordinate of $\oBT$ as in (\ref{eqbprW}) and leave the first
coordinate untouched, and the\vspace*{1pt} projection images $\obP_W=\obP\circ
\obpr_W^{-1}$, where $W\in\PP$. We also introduce the notation $\oBT
_W:=\obpr_W(\oBT)$.

\subsection{Division kernels}\label{SECCellDivisionKernels}

Consider a random element $\bT$ of $\BB\TT_W$ for a window $W\in\PP
_\cup$. The process $(\bT_s)_{0\le s\le1}$ is then automatically
Markovian because its ``past'' is part of the ``present''. In this paper,
we will focus on the ``nice'' case in which the evolution of this Markov
process is described by a rate kernel that specifies the jump times and
transitions of $(\bT_s)_{0\le s\le1}$. Since the only transitions are
single-cell divisions by bi-coloured hyperplanes, this means that the
rate kernels take the following form.

\begin{definition}\label{defdivisionkernel}
A \emph{division kernel}
is a measure kernel $\Phi$ from the set
\[
\bigl\{(s,\bT_s,c)\in\oBT\times\CC\dvtx c\in T_s \bigr\}
\]
to $\HH$ such that each $\Phi(s, \bT_s, c, \cdot)$ is a finite
measure supported on $\langle c\rangle\subset\HH$.
If $\Phi$ is only defined for arguments in $\oBT_W\times\CC_W$,
$\Phi$ is called a \emph{division kernel
for the window $W\in\PP_\cup$}.
\end{definition}

In the following, it will be convenient to work also with the \emph
{cumulative division kernel}
%
\begin{equation}
\label{eqcumulkernel} \widehat\Phi(s, \bT_s, \cdot) = \sum
_{c\in T_s} \delta_c\otimes \Phi(s, \bT_s,
c, \cdot)
\end{equation}
from $\oBT$ to $\CC\times\HH$. Note that, conversely, $\Phi(s, \bT
_s, c, \cdot) = \widehat\Phi(s, \bT_s, \{c\}\times\cdot) $.

The next remark describes how a division kernel determines the
evolution of a BRT within a bounded window.

%
\begin{remark}[(Local BRTs with prescribed division kernels)]\label{remconstruction}
Let $W\in\PP_\cup$ be a fixed window, $\Phi_W$ be a division kernel
for $W$, and
\[
\hat\phi_W(s,\bT_s):=\widehat\Phi_W
\bigl(s, \bT_s, T_s\times\langle W\rangle \bigr)
\]
the finite total mass of the cumulative kernel $\widehat\Phi_W(s, \bT_s,
\cdot)$. We construct a random element $\bT$ of $\BB\TT_W$ as follows:
\begin{longlist}[(R)]
\item[(I)] Pick an initial tessellation $T_0\in\TT_W$ according to
some probability law $P_W$ on
$\TT_W$,
and let $s_0=0$ and $\bT_0:= T_0$. Also, let $i=1$ and proceed with
the following random recursion over the number~$i$.
\item[(R)] Suppose that $i\ge1$ and both a random time $s_{i-1} \in
[0,1]$ and a BRT $\bT_{s_{i-1}}\in\BB\TT_{s_{i-1}}$ are already
realised. Then take a random time $s_i\in(s_{i-1},\infty]$ with
``survival'' probability
%
\begin{equation}
\label{survivalprobability} \Prob(s_i>s) =\exp \biggl[- \int
_{s_{i-1}}^s \hat\phi_W(u\wedge 1,
\bT_{s_{i-1}})\,\dint u \biggr]
\end{equation}
%
for $s>s_{i-1}$.
If $s_i\le1$, proceed to define an extension $\bT_{s_i}\in\BB\TT
_{s_i}$ of $\bT_{s_{i-1}}$ as follows: pick a random cell $c_i\in
T_{s_{i-1}}$ and a bi-coloured hyperplane $H_i$ according to the law
\[
{\widehat\Phi_W(s_i,\bT_{s_{i-1}}, \cdot)} /{
\hat \phi _W(s_i,\bT_{s_{i-1}})}.
\]
(Note that the denominator does not vanish for each possible choice of
$s_i$.) Then let $T_s=T_{s_{i-1}}$ for $s\in(s_{i-1},s_i)$ and
$T_{s_i}=\oslash_{c_i,H_i}(T_{s_{i-1}})$, that is,
\[
\sD(\bT_{s_{i}})=\sD(\bT_{s_{i-1}})\cup \bigl\{(s_i,c_i,H_i)
\bigr\}.
\]
Next, let $i:=i+1$ and go to (R).
In the case $s_{i}>1$, let $T_s=T_{s_{i-1}}$ for $s\in(s_{i-1},1]$,
set $n=i-1$, and stop.
\end{longlist}

One needs to ensure that this algorithm terminates after finitely many
steps. It is not difficult to show that this is the case if
%
\begin{equation}
\label{eqbdkernel} \sup_{s,\bT_s,c}\Phi_W \bigl( s,
\bT_s,c,\langle c\rangle \bigr)=: \phi <\infty;
\end{equation}
see the proof of Lemma~\ref{lemmoment-bound} below.
This lemma shows further that the process $(\bT_s)_{0\le s\le1}$ can
be characterised as the unique,
in general time-inhomogeneous pure jump (i.e., piecewise constant)
Markov process in $\BB\TT_W$ with
initial distribution $P_{W}$ and generator
%
\begin{equation}
\label{LocGen} \qquad\LL_{W,s}^{\Phi_W} g(\bT_s) = \int
_{T_s\times\langle W\rangle} \widehat\Phi_W \bigl(s,\bT_s,
\dint(c,H) \bigr) \bigl[g \bigl(\oslash_{s,c,H}(\bT _s) \bigr)
- g(\bT_s) \bigr]
\end{equation}
at time $s\in[0,1]$. Here, $\oslash_{s,c,H}(\bT_s)\in\BB\TT_s$
is the branching tessellation that coincides with $\bT_s$ for times
less than $s$ and equals
$\oslash_{c,H}(T_s)$ at time $s$, and $g$ is any bounded measurable
function on $\BB\TT_{W}$.
The distribution of $\bT$ is a BRT $\bP_W$ in $W\in\PP$, and this
$\bP_W$ is called the \emph{BRT in $W$ with division kernel $\Phi_W$
and initial distribution $P_{W}$}.
\end{remark}

The main objects of this paper are BRTs on the full space $\RR^d$ that
can be characterised in a similar way as the local BRTs in the remark
above. Namely, for any division kernel $\Phi$ and $0\le s\le1$ we
define an operator $\LL^\Phi_s$ by
%
\begin{equation}
\label{eqglobalgen} \LL^\Phi_s g(\bT_s)=\int
\widehat\Phi \bigl(s,\bT_s,\dint(c,H) \bigr) \bigl[g \bigl(
\oslash_{s,c,H}(\bT_s) \bigr)-g(\bT_s) \bigr].
\end{equation}
Here, $\oslash_{s,c,H}$ is as in the preceding remark, and $g$ is any
bounded local function on~$\BB\TT$, where \emph{local} means that
$g$ is $\cB_W$-measurable for some $W\in\PP$.

%
\begin{definition}\label{defBRTforPhi}
For a given division kernel $\Phi$, we will say that a BRT $\bP\in
\sP$ \emph{evolves according to $\Phi$} if the Markov process $\bT
=(\bT_s)_{0\le s\le1}$ in $\BB\TT$ with~distribution $\bP$
satisfies the forward equation with generators $\LL^\Phi_s$, in that
%
\begin{equation}
\label{eqglobalforwardeq}
\int_0^t \dint s\int\dint
\bP_{s} \LL_{s}^{\Phi}g = \int g\, \dint
\bP_{t}-\int g \,\dint\bP_{0}
\end{equation}
for all $t\in[0,1]$ and all bounded local functions $g$ on $\BB\TT$.
\end{definition}

Obviously, this definition refers to a BRT $\bP$ as a process evolving
in time, by saying that the Markov process with distribution $\bP$
evolves just as the local processes in Remark~\ref{remconstruction},
in that a cell $c$ in\vspace*{1pt} environment $\bT_s$ at time $s$ is split by a
bi-coloured hyperplane $H$ with instantaneous intensity $\widehat\Phi
(s,\bT_s,c,\dint H )\,\dint s$. Later we will study the spatial
aspects of $\bP$.

%
\begin{figure}[t]

\includegraphics{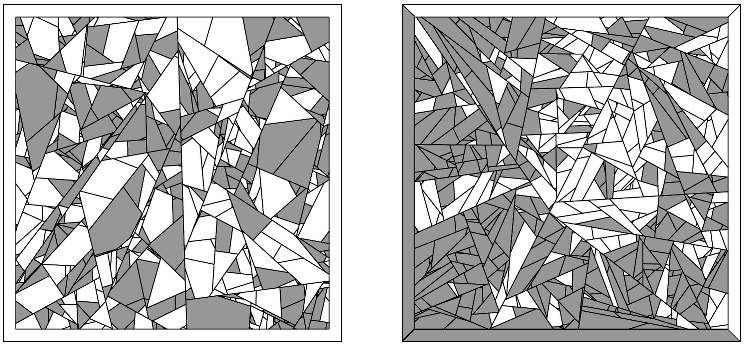}

\caption{Simulations of two BRTs with isotropic selection of lines,
two colours and the full window as single initial cell.
\emph{Left}: A STIT tessellation; colours are chosen at random.
\emph{Right}: Colour mutation and size balancing as in Example
\protect\ref{exmutations,balance,aging}, but without aging.
Here, $\varepsilon=0.025$, $\beta(\mathsf{s})=(1+\mathsf{s})/2$
and a mixed
boundary condition as indicated.}\label{Bild}
\end{figure}

\subsection{Examples of division kernels}

This section contains a few examples of division kernels; two
simulation pictures are shown in Figure~\ref{Bild}. The first is (by
now) classical and will be used as a reference model throughout the following.

%
\begin{example}[(STIT tessellations)]\label{exSTIT}
Let $\Lambda$ be a locally finite measure on the set $\HH$ of all
bi-coloured hyperplanes, which is invariant under all translations.
That is, under the identification of $H\in\HH$ with $(u,r,\sigma
^+,\sigma
^-)\in\SS^{d-1}_+\times\RR\times\Sigma^2$, $\Lambda$ can be
written in
the form
%
\begin{equation}
\label{eqLambda} \Lambda(\dint H)= \lambda(\dint u)\,\dint r \mu \bigl(u,\dint
\sigma ^+,\dint \sigma^- \bigr).
\end{equation}
Here, $\lambda$ is a measure on $\SS^{d-1}_+$,
and $\mu$ is a probability kernel from $\SS^{d-1}_+$ to $\Sigma^2$.
(The translation invariance is expressed by the fact that the
$r$-marginal is Lebesgue measure and $\mu$ does not depend on~$r$.)
A natural choice is the motion-invariant measure $\Liso$ for which
$\lambda$ is the normalised surface measure $\lambda_\iso$ on $\SS
^{d-1}_+$ and $\mu(u, \cdot) =\nu\otimes\nu$ for a reference
probability measure $\nu$ on $\Sigma$.
Then a STIT tessellation with driving measure $\Lambda$ is a BRT for the
division kernel
%
\begin{equation}
\label{DlaPhiLambda} \Lambda^{*}(s,\bT_s,c, \cdot):= \Lambda
\bigl( \cdot\cap \langle c\rangle \bigr).
\end{equation}
In the uncoloured case, this model has been introduced by Mecke, Nagel
and Wei\ss\ \cite{MNW,NW05}.
Since $\Lambda^{*}$ does not depend on the time $s$, the random
holding times
$s_i-s_{i-1}$ in Remark~\ref{remconstruction} above are exponentially distributed and can be
understood as minima over $c\in T_{s_{i-1}}$ of independent exponential
times with parameter $\Lambda(\langle c\rangle)$, which are
associated to the
presently existing cells. [In the isotropic case $\Lambda=\Liso$, the
parameter $\Lambda(\langle c\rangle)$ is precisely the mean width
of $c$.] In
other words, the tessellations evolve according to a continuous-time
branching process on $\CC_W$, $W\in\PP$, in which all cells $c$
behave independently of each other, live for an exponential time with
parameter $\Lambda(\langle c\rangle)$ and then split into two
parts according to
the conditional distribution $\Lambda( \cdot|\langle c\rangle
)$. In
particular, this implies that smaller cells live stochastically longer.

In view of this independence of the evolution in different cells, it is
clear that for each $T_0\in\TT$ there exists a unique
whole-space BRT $\bPi^\Lambda(T_0, \cdot)$, called STIT tessellation
of $\RR^d$, with initial tessellation $T_0$ and driving measure
$\Lambda$.
In fact, if the support of $\lambda$ contains a linear basis of $\RR
^d$, one can also construct a unique BRT $\bPi^{\Lambda,\infty
}=\bPi^\Lambda(T_0^\infty, \cdot)$ for the degenerate initial tessellations $T_0^\infty$
that consist of the single ``cell'' $\RR^d$ with any colour $\sigma$; see
\cite{MNW}, Theorem~1, and \cite{NW05}, Theorem~1. [By~(\ref{DlaPhiLambda}), $\bPi^{\Lambda,\infty}$ does not depend on
$\sigma$.]

Formally, $\bPi^\Lambda$ is a probability kernel from $\TT$ to $\BB
\TT
$. So, for each $P\in\sP(\TT)$,
$P\bPi^\Lambda=\int P(\dint T_0) \bPi^\Lambda(T_0, \cdot)$
is the unique
BRT for $\Lambda$ with initial distribution~$P$.
Its projections to arbitrary windows $W\in\PP$ are given by
%
\begin{equation}
\label{eqSTITlocalisation} \bigl(P\bPi^\Lambda \bigr)\circ\bpr_W^{-1}=
P_{W}\bPi^\Lambda_W
\end{equation}
for the restricted STIT kernel $\bPi^\Lambda_W(T_{W,0}, \cdot)$ from
$\TT_W$ to $\BB\TT_W$ with the restricted driving measure $\Lambda
(
\cdot\cap\langle W\rangle)$.
The abbreviation STIT stands for {st}ability under the operation of
{it}eration of tessellations. An explanation and further remarkable
properties can be found in \cite{MNW,NW05,RedenbachThaele,ST2,ST3,ST7,ST4,ST5,ST6} and \cite{TWN}.
\end{example}

A generalisation of the STIT models, which still keeps the independence
of the division process for distinct cells,
are the \emph{cell-driven BRTs}, which have division kernels of the form
%
\begin{equation}
\label{eqshape-driven} \Phi(s,\bT_s,c,\dint H) = \varphi(c,H) \Lambda(\dint
H)
\end{equation}
with a density function $\varphi(c,H)$ on $\CC\times\HH$ which vanishes
except when $H\in\langle c\rangle$. A~special case are the \emph
{shape-driven BRTs} investigated in \cite{ST4}; see also the examples therein.

The next example demonstrates the flexibility of modelling in the
present setting: it combines
an interaction between the colours of the cells with a geometric
homogenisation mechanism and an aging effect. The last feature takes
advantage of the fact that division kernels may also depend on the past.

%
\begin{example}[(Contact-induced mutations with size balancing and aging)]\label{exmutations,balance,aging}
Let the colour space be $\Sigma=\{-1,1\}$ and consider a division
kernel of the form
\[
\Phi(s,\bT_s,c,\dint H)= \varphi(c,\eta) \lambda_\iso(
\dint u)\,\dint r\, \mu \bigl(s,\bT_s,c, \dint\sigma^+ \bigr) \mu
\bigl(s, \bT_s,c, \dint\sigma^- \bigr),
\]
where $H=(\eta,\sigma^+,\sigma^-)$ with spatial part $\eta
=(u,r)\in\SS
^{d-1}_+\times\RR$ and colours \mbox{$\sigma^\pm\in\Sigma$}.
A special choice of the geometric pre-factor is
\[
\varphi(c,\eta)= \varepsilon \1_{\langle c\rangle}(\eta) +\varepsilon^{-1}
\1_{\langle\varepsilon
\star c\rangle}(\eta),
\]
for some small $\varepsilon>0$; here, $\varepsilon\star c=
m(c)+\varepsilon(c-m(c))$ is the $\varepsilon
$-retraction of $c$. This choice has the effect that the cutting
hyperplane will typically pass close to the midpoint $m(c)$ of $c$, so
that its two daughter cells have comparable size. One can further
choose the colouring rule
\[
\mu(s,\bT_s,c, \cdot)= \delta_{\col(c)} + \beta(\mathsf
{a}_{s,c,\bT_s},\mathsf{s}_{c,T_s}) \delta_{-\col(c)},
\]
where $\mathsf{a}_{s,c,\bT_s}=s-\min\{u\in[0,s]\dvtx c\in T_u\}$ is the
age of $c$ at time $s$,
\[
\mathsf{s}_{c,T_s}=\sum_{c'\in T_s\dvtx  \col(c')=-\col(c)} \Vol
_{d-1} \bigl(c\cap c' \bigr)/\Vol_{d-1}(\partial
c)
\]
is the opposite-type surface fraction (measured by the Hausdorff
measure of dimension $d-1$),
and $\beta\dvtx [0,1]^2\rightarrow(0,\infty)$ is a suitable positive function.
For instance, $\beta$ can be taken to be decreasing in $\mathsf{a}$ so
that increasing age reduces the willingness of splitting and mutating.
One can further let $\beta$ be increasing in $\mathsf{s}$. Then the
larger a cell's surface fraction is in contact with cells of opposite
type, the more the cell gets ``nervous'' and hurries to divide, and the
more likely it is that its daughter cells mutate to adapt their type to
that of the neighbours.
\end{example}

Our third example may seem somewhat exotic. It will be used in Remark
\ref{remuniqueness} to demonstrate that a BRT on the full space $\RR
^d$ is not necessarily uniquely determined by its initial distribution
and its division kernel.

%
\begin{example}[({Directional infinite-range interaction})]\label{exhor-vert}
This is an uncoloured model, for which $\Sigma$ is a
singleton.
We further confine ourselves to the planar case ${d=2}$. Let $\Lambda
_\hor
(\dint H)=\delta_{(0,1)}(\dint u)\,\dint r$ and $\Lambda_\vert(\dint
H)=\delta_{(1,0)}(\dint u)\,\dint r$ be the measures on $\HH=\SS
_+^1\times\RR$ for which all lines are horizontal, respectively, vertical.
For any cell $c\in\CC$ let $\diam_\hor(c)=\max_{x,y\in
c}|x_1-y_1|$ and $\diam_\vert(c)=\max_{x,y\in c}|x_2-y_2|$
be the horizontal and vertical diameters of $c$, where $x_i$ stands for
the $i$th coordinate of $x$. Also, let
\[
\CC_\hor= \bigl\{c\in\CC\dvtx \diam_\hor(c)>
\diam_\vert(c) \bigr\}
\]
be the set of all ``horizontal'' cells. Finally, writing $[n]$ for the
centred square of area $n^2$, let
\[
\rho_\hor(T)=\limsup_{n\to\infty} n^{-2} \bigl| \bigl
\{c\in T\cap \CC_\hor\dvtx m(c)\in[n] \bigr\} \bigr|
\]
be the upper density of horizontal cells for a tessellation $T\in\TT
$, and define $\rho_\vert(T)$ analogously. Then let
\[
\TT_\hor= \bigl\{T\in\TT\dvtx \rho_\hor(T) >
\rho_\vert(T) \bigr\}
\]
be the set of tessellations with a dominating fraction of horizontal
cells, and $\TT_\vert=\TT\setminus\TT_\hor$.
Consider the division kernel
%
\begin{equation}
\label{eqexhor-vert} \Phi(s,\bT_s,c, \cdot)= \1_{\TT_\hor}(T_s)
\Lambda_\hor \bigl( \cdot \cap\langle c\rangle \bigr)+
\1_{\TT_\vert}(T_s) \Lambda_\vert \bigl( \cdot\cap\langle
c\rangle \bigr).
\end{equation}
Since $\TT_\hor$ is invariant under translations and tail-measurable,
this $\Phi$ looks at the actual tessellation ``at infinity'' in order to
decide whether the cutting line should be horizontal or vertical.
\end{example}

\subsection{Gibbsian BRTs}

In this section, we introduce a Gibbsian perspective on~BRTs. As is
standard in the theory of Gibbs measures,
one aims at describing a macroscopic system by means of its local
conditional distributions that describe the behaviour inside a bounded
region when the remaining system is fixed. We first define such
conditional distributions in the context of BRTs. This will allow us
then to introduce Gibbsian BRTs.
Let $W\in\PP$ be a fixed window.

\subsubsection{Inner and outer projections}

Recall from (\ref{pr}) and (\ref{eqbprW}) that the projections $\pi
_W$ and $\bpr_W$ are defined by intersecting the cells with $W$, and
thus wipes off much information on the cell geometry (such as, e.g.,
the location of midpoints). To avoid this,
we introduce the ``\emph{inner}'' \emph{projection}
%
\begin{equation}
\label{eqinnerproj} \pi_W^\inn\dvtx \TT\to
\TT_W^\inn,\qquad T\mapsto T_W^\inn:=
\bigl\{c\in T\dvtx c\subset\operatorname{int}(W) \bigr\},
\end{equation}
which removes all cells which are not completely contained in the
interior of $W$.
It takes values in the set $ \TT_W^\inn$ of all possibly empty, not
necessarily connected collections of cells inside $W$ with pairwise
disjoint interiors. The counting variables $N_A$ in (\ref{eqNA}) are
even defined on $ \TT_W^\inn$ and generate a $\sigma$-field $\cB(
\TT
_W^\inn)$, for which $\pi_W^\inn$ is measurable. As the cells of
$T_W^\inn$ are even required to be contained in the interior of $W$,
$T_W^\inn$ is a measurable function of $T_W$.

In the same way, we define the \emph{inner projection}
%
\begin{equation}
\label{eqprojin} \bpr_W^\inn\dvtx \BB\TT\ni\bT\mapsto
\bT_W^\inn= \bigl(T_{W,s}^\inn
\bigr)_{0\le s\le1}
\end{equation}
on $\BB\TT$, where $T_{W,s}^\inn= \pi_W^\inn(T_s)$. Arguing as in
Remark~\ref{remBT}, one finds that
$\bT_W^\inn$ is uniquely determined by $T_{W,0}^\inn$, $\sD(\bT
_W^\inn)$, and the finite set of all
``immigration events'' $(s,c)$ with $T_{W,s}^\inn=T_{W,s-}^\inn\cup\{
c\}$. Consequently, one can generate a $\sigma$-field
on the range $\BB\TT_W^\inn$ of $\bpr_W^\inn$ by means of counting
variables similar to those in (\ref{bNA,B}),
so that $\bpr_W^\inn$ becomes measurable. Note also that $\bpr
_W^\inn=\bpr_W^\inn\circ\bpr_W$.

Complementary to the above, we also introduce an ``\emph{outer}''
\emph{projection for $W$} by
%
\begin{equation}
\label{eqouterproj} \pi_W^\out\dvtx \TT\ni T\mapsto
T_W^\out:=T\setminus T_W^\inn=
\bigl\{ c\in T\dvtx c\setminus\operatorname{int}(W)\ne\varnothing \bigr\},
\end{equation}
and a ``\emph{boundary}'' \emph{projection}
%
\begin{equation}
\label{eqboundaryproj} \pi_W^\partial\dvtx \TT\ni T\mapsto
\pi_W^\partial(T_W)= \{c\cap W\dvtx c\in T, c\cap
\partial W\ne\varnothing \}.
\end{equation}
Likewise, on the level of branching tessellations, we define
%
\begin{eqnarray}
\label{eqprojout} \bpr_W^\out \dvtx \BB\TT \ni\bT&\mapsto&\bT
_W^\out = \bigl(\pi_W^\out(T_s)
\bigr)_{0\le s\le1},
\\
\label{eqprojboundary} \bpr_W^\partial \dvtx \BB\TT \ni\bT &\mapsto& \bT
_W^\partial = \bigl(\pi_W^\partial(T_s)
\bigr)_{0\le s\le1} =\bpr_W^\out(\bT_{W}).
\end{eqnarray}
In the forest picture of Figure~\ref{Tree}, each $\bT_W^\out$ in the
range $\BB\TT_W^\out$ of $\bpr_W^\out$ corresponds to a forest of
binary trees from which all cells within $W$ are erased.
So, one can use the counting variables in (\ref{bNA,B}) to generate a
$\sigma$-field on $\BB\TT_W^\out$, and $\bpr_W^\out$
is then evidently measurable. The same applies to $\bpr_W^\partial$.
Furthermore, to keep the full information on the initial tessellation
in $\RR^d$, respectively, in $W$,
it will also be convenient to introduce the mappings
%
\begin{eqnarray}
\label{eqinitial+out} \bpr_W^{0,\out}\dvtx \bT &\mapsto& \bT_W^{0,\out
}:= \bigl(T_{W,0}^\inn,
\bT_W^\out \bigr),
\\
\label{eqinitial+partial} \bpr_W^{0,\partial}\dvtx \bT &\mapsto&\bT
_W^{0,\partial}:= \bigl(T_{W,0}^\inn,
\bT_W^\partial \bigr).
\end{eqnarray}
For each of the projections $\bpr_W^\ast$ in (\ref{eqprojin}),
(\ref{eqprojout}), (\ref{eqprojboundary}), (\ref
{eqinitial+out}) and (\ref{eqinitial+partial}), we write $\cB
_W^\ast
=\sigma(\bpr_W^\ast)$ for the $\sigma$-field on $\BB\TT$ that is
generated by this projection. By abuse of notation, we will use the
same symbol $\cB_W^\ast$ for the $\sigma$-field on the range of
$\bpr
_W^\ast$.

\subsubsection{Conditional BRTs}

Let $\bT\in\BB\TT$ any branching tessellation. Consider the
time-dependent ``inner'' window
%
\begin{equation}
\label{eqinnW} \inn_W \bigl(s,\bT_W^\partial
\bigr):= W\setminus\operatorname{int} \bigl(\cup \bigl\{c\dvtx c\in
T_{W,s}^\partial \bigr\} \bigr) = \cup \bigl\{c\dvtx c\in
T_{W,s}^\inn \bigr\},
\end{equation}
which is possibly empty and not necessarily connected. It is measurable
jointly in both arguments,
piecewise constant and right-continuous as a function of~$s$. Let
%
\begin{equation}
\label{eqjumptimes} 0<t_1=t_1 \bigl(\bT_W^\partial
\bigr) <\cdots<t_n=t_{n(\bT_W^\partial)} \bigl(\bT _W^\partial
\bigr)<1
\end{equation}
be the jump times of the path $s\mapsto\inn_W(s,\bT_W^\partial)$,
which depend measurably on $\bT_W^\partial$.
[Note that possibly $n(\bT_W^\partial)=0$. For the sake of
convenience, we also exclude the case that there is a jump at time $1$,
which occurs with probability zero.] At each $t_i$, $\bT_W^\partial$
creates a new cell $c_i$ inside $W$, namely
\[
c_i =c_i \bigl(\bT_W^\partial
\bigr):= \mathrm{cl} \bigl(\inn_W \bigl(t_i,\bT
_W^\partial \bigr)\setminus\inn_W
\bigl(t_{i-1},\bT_W^\partial \bigr) \bigr),
\]
where $t_0=0$. In other words, $\bT_{W}^\partial$ induces a process
of immigration of cells into~$W$.

%
\begin{definition}\label{defcondGibbs} Let $\Phi$ be a division
kernel and suppose that the following random process $\bS=(S_s)_{0\le
s\le1}$ with $\bS\cup\bT_{W}^\out:=(S_s\cup T_{W,s}^\out)_{0\le
s\le1}\in\BB\TT$ is well defined:
\begin{itemize}[--]
\item[--] Let $\bS_{[0,t_1)}=(S_s)_{0\le s<t_1}$ be the BRT in the
window $\inn_W(0,\bT_W^\partial)$ with time interval $[0,t_1)$,
initial tessellation $S_0=T_{W,0}^\inn$ and division kernel
\[
\Phi_W^\inn \bigl(s,\bS_s,c, \cdot|
\bT_W^\out \bigr):=\Phi \bigl(s,\bS_s\cup
\bT_{W,s}^\out,c, \cdot \bigr)
\]
for $c\in S_s, s\in[0,t_1)$.
\item[--] For $i=1,\ldots,n$ and conditional on $\bS_{t_i-}$ let
$\bS_{[t_i,t_{i+1})}=(S_s)_{t_i\le s<t_{i+1}}$ be the BRT in the
window $\inn_W(t_i,\bT_W^\partial)=c_i\cup\inn_W(t_{i-1},\bT
_W^\partial)$ with time interval $[t_i,t_{i+1})$, initial tessellation
$S_{t_i}=S_{t_i-}\cup\{c_i\}$ and division kernel
\[
\Phi_W^\inn \bigl(s,\bS_s,c, \cdot|
\bT_W^\out \bigr):=\Phi \bigl(s,\bS_s\cup
\bT_{W,s}^\out,c, \cdot \bigr)
\]
for $c\in S_s, s\in[t_i,t_{i+1})$.
Here $t_{n+1}=1$, and we finally set $S_1:=S_{1-} $.
\end{itemize}
The distribution of $\bS$ on $\BB\TT_W^\inn$ will be denoted by
$\bG_W^\Phi( \cdot|\bT_W^{0,\out})$ and is called the \emph
{conditional BRT for $\Phi$ in $W$ with initial tessellation
$T_{W,0}^\inn$ and boundary condition~$\bT_W^{\out}$}.
\end{definition}

By construction, $\bG_W^\Phi$ is a probability kernel from $(\BB\TT
_W^{0,\out},\cB_W^{0,\out})$ to
$(\BB\TT_W^\inn, \cB_W^\inn)$.

%
\begin{example}[({Conditional STIT tessellations})]\label{excondSTIT}
As in Example~\ref{exSTIT}, let $\Lambda$ be a locally finite
measure on
$\HH$
and $\Lambda^{*}$ be the associated division kernel; cf. (\ref
{DlaPhiLambda}). Then 
$\bG_W^\Lambda( \cdot|\bT_W^{0,\out}):=\bG_W^{\Lambda
^{*}}( \cdot|\bT
_W^{0,\out})$ is simply the distribution of
\[
\bigcup_{c\in T_{W,0}^\inn}\bS^{(c)}\cup\bigcup
_{i=1}^n \bS^{(i)}
\]
for independent random STIT tessellations $\bS^{(c)}$ and $\bS^{(i)}$
for $\Lambda$. Here, $\bS^{(c)}$ evolves in time $[0,1]$ from the
single-cell tessellation $S_0^{(c)}=\{c\}$ of the initial polytope
$\mathrm{sp}(c)$, whereas $\bS^{(i)}$ evolves in time $[t_i,1]$ from
the single-cell initial tessellation $S_{t_i}^{(i)}=\{c_i\}$ of the
``immigrated'' polytope $\mathrm{sp}(c_i)$ and is extended to the full
interval $[0,1]$ by setting $S^{(i)}_s=\varnothing$ for $s\in[0,t_i)$.
Since $\Lambda^{*}$ does not depend on the surrounding tessellation, it
follows that the measure $\bG_W^\Lambda( \cdot|\bT_W^{0,\out})$
depends only on~$\bT_W^{0,\partial}$.
\end{example}

Here is the natural counterpart of the concept of (macroscopic) Gibbs
measures in our setup of branching random tessellations.

%
\begin{definition}\label{defGibbs}
Let $\Phi$ be any division kernel. A BRT $\bP\in\sP$ is called a
\emph{Gibbsian BRT for $\Phi$} if, for all $W\in\PP$,
$\bG_W^\Phi$ is a regular version of its conditional probability
given $\cB_W^{0,\out}$. More explicitly, this means that
\[
\int f \,\dint\bP=\int\bP(\dint\bT)\int\bG_W^\Phi \bigl(
\dint\bS | \bT_W^{0,\out} \bigr) f \bigl(\bS\cup
\bT_{W}^\out \bigr)
\]
for all bounded measurable functions $f$ on $\BB\TT$ and all $W\in
\PP$.
\end{definition}

In contrast to Definition~\ref{defBRTforPhi} in which a BRT is
considered as a process in time, the preceding definition emphasises
the spatial aspects of a BRT, by saying that $\Phi$ describes the cell
splitting mechanism within an arbitrary local window when the evolution
of all other cells is given.

\subsection{Translation invariance}

A main focus of this paper is on BRTs that are invariant under spatial
translations. For each $x\in\RR^d$, the translation
$\vartheta_x$ by the vextor $-x$ acts:
\begin{itemize}[--]
\item[--] on cells $c\in\CC$ via $\vartheta_x\dvtx c\mapsto c-x:=(\mathrm
{sp}(c)-x,\col(c))$,
\item[--] on bi-coloured hyperplanes $H\in\HH$ via
\[
\vartheta_x\dvtx H\mapsto H-x:= \bigl(\mathrm{sp}(H)-x,\col^+(H),
\col^-(H) \bigr),
\]
\item[--] on tessellations $T\in\TT$ via
\[
\vartheta_x\dvtx T\mapsto T-x:=\{c-x\dvtx c\in T\},
\]
%
%
\item[--] on branching tessellations $\bT=(T_s)_{0\le s\le1}\in\BB\TT$ via
\[
\vartheta_x\dvtx \bT\mapsto\bT-x:=(T_s-x)_{0\le s\le1}.
\]
\end{itemize}
That is, only the spatial coordinates are shifted, but the colours
remain unchanged. Moreover, by abuse of notation
we use the same symbol $\vartheta_x$ for the translation on each
level, and we will also use it for the simultaneous translation of
pairs of objects as above.

%
\begin{definition}
A BRT $\bP\in\sP$ is called \emph{translation
invariant} if it is invariant under the action of the translation group
$\Theta=(\vartheta_x)_{x\in\RR^d}$ on $\BB\TT$, in that $\bP
\circ
\vartheta_x^{-1}=\bP$ for all $x\in\RR^d$.
We write $\sP_\Theta=\sP_\Theta(\BB\TT)$ for the set of all translation
invariant BRTs that satisfy the first-moment condition (\ref
{eqfirstmoment}), which by translation invariance is equivalent to the
requirement that the ``hitting intensity''
%
\begin{equation}
\label{eqiP} i_1(\bP):= \int\bP(\dint\bT) |T_{[1],1}|
\end{equation}
is finite. Here, $[1]:=[-1/2,1/2]^d$ stands for the centred unit cube.
\end{definition}

Translation invariance allows to investigate the behaviour of a random
tessellation ``around a typical cell'', which for convenience is located
``around the origin''. This is formalised by means of Palm calculus as
presented in \cite{KRM}, Chapter~12, and~\cite{SW}, Theorem~4.1.1.
Let $\bP\in\sP_\Theta$ be given. Then the \emph{Campbell measure} of
$\bP$ on $\BB\TT\times\CC$ is defined by
%
\begin{equation}
\label{Campbell} \bC^\bP= \int\bP(\dint\bT) \sum
_{c\in T_1} \delta_{(\bT,c)}.
\end{equation}
It captures the joint distribution of the (terminal) cells and the
complete history of their surrounding tessellation.
The Palm calculus now states that there exists a finite measure $\bP
^0$ on
$\BB\TT\times\CC_0$, the so-called \emph{Palm measure} of $\bP$,
such that the \emph{Palm formula}
%
\begin{eqnarray}
\label{Palm} && \int\dint\bC^\bP(\bT,c) f \bigl(m(c),c- m(c),
\bT-m(c) \bigr)
\nonumber\\[-8pt]\\[-8pt]
&&\qquad = \int\dint x \int\dint\bP^0(\bT,c) f(x,c,\bT)\nonumber
\end{eqnarray}
holds for any nonnegative measurable function $f$ on $\RR^d\times\CC
_0\times\TT$. Its normalised marginal on $\CC_0$ is called the \emph
{typical cell distribution}.

Later on, we will often consider the integral over time $s$ of the
Campbell measure and the Palm measure of the projected BRTs $\bP_s$,
and it will be convenient to have a shorthand notation for these
objects. So, we define the \textit{extended Campbell measure}
%
\begin{equation}
\label{eqextCampbell} \obC^{\bP} = \int_0^1\dint s \int\bP_s(\dint\bT_s) \sum
_{c\in T_s} \delta_{(s,\bT_s,c)}
\end{equation}
and the \textit{extended Palm measure}
%
\begin{equation}
\label{eqextPalm} \obP^0 = \int_0^1
\dint s \int\dint\bP^0_s(\bT_s,c) \delta
_{(s,\bT_s,c)}.
\end{equation}
For $W\in\PP$, we similarly define the \textit{extended local
Campbell measure}
%
\begin{equation}
\label{eqextlocCampbell} \obC^{\bP_W} = \int_0^1
\dint s \int\bP_{W,s}(\dint\bT_{W,s}) \sum
_{c\in T_{W,s}} \delta_{(s,\bT_{W,s},c)}.
\end{equation}
Also, we will often use the \emph{time-integrated} version of the
{Palm formula} (\ref{Palm}), where the Campbell measure $\bC^\bP$
and the Palm measure $\bP^0$ are replaced by their extended relatives
$\obC^\bP$ and $\obP^0$, respectively.
For example, combining the time-integrated Palm formula with the
first-moment condition (\ref{eqiP})
we find that the total mass of $\obP^0$ can be estimated by
%
\begin{equation}
\label{eqextPalmmass} \bigl\|\obP^0\bigr\| = \int\dint\obP(s, \bT_s) \bigl|
\bigl\{c\in T_s\dvtx m(c)\in[1] \bigr\} \bigr|\le i_1(\bP)<
\infty.
\end{equation}

We conclude this section with some comments on random, but not
branching, tessellations $P\in\sP(\TT)$. These can be considered as
BRTs by identifying the space $\TT$ with $\BB\TT_0$. In particular,
it is then clear what translation invariance means, and we can
introduce the set $\sP_\Theta(\TT)$ of all translation invariant random
tessellations $P$ that satisfy the first-moment condition
%
\begin{equation}
\label{eqi0P} i_0(P):= \int P(\dint T) |T_{[1]}| <\infty.
\end{equation}
Since $i_0(\bP\circ\bpr_0^{-1})\le i_1(\bP)$, the initial
distribution of each $\bP\in\sP_\Theta$ satisfies (\ref{eqi0P}).

\section{Results}\label{secResults}

Most of our results use a STIT tessellation as a reference model.
Therefore, we fix throughout a locally finite reference measure
$\Lambda$
on $\HH$ which is invariant under translations. Moreover, we write
$\bPi^\Lambda(T_0, \cdot)$ for the associated STIT kernel, as
introduced ibidem.

\subsection{The role of division kernels for BRTs}\label{sec31}

Definitions~\ref{defBRTforPhi} and~\ref{defGibbs} provide two
ways of describing how a BRT $\bP$ may depend on a division kernel
$\Phi$, by considering either the evolution in time or the division of
cells in space. Our first result implies that these two descriptions
are equivalent.

%
\begin{theorem}\label{thmequivalence}For each $\bP\in\sP$ and
every cell division kernel $\Phi$,
the following statements are equivalent.
\begin{longlist}[(a)]
\item[(a)] $\bP$ evolves according to $\Phi$ as specified in Definition
\ref{defBRTforPhi}.
\item[(b)] $\bP$ is Gibbsian for $\Phi$ in the sense of Definition~\ref
{defGibbs}.
\item[(c)] For all nonnegative measurable functions $f$ on $\oBT\times\CC
\times\HH$,
%
\begin{eqnarray*}
&&\int\bP(\dint\bT) \sum_{(s,c,H) \in\sD(\bT)} f(s,\bT
_{s-},c,H)
\\
&&\qquad =\int\dint\obP(s,\bT_s)\int\widehat\Phi \bigl(s,
\bT_s, \dint(c, H) \bigr) f(s,\bT_{s},c,H).
\end{eqnarray*}
\end{longlist}
\end{theorem}

If the above properties (a) to (c) hold, we will simply say that \emph
{$\bP$ admits the division kernel $\Phi$}, or that \emph{$\Phi$ is
a division kernel for $\bP$}. While statements (a) and (b) elucidate
the temporal and spatial roles of $\Phi$,
the equivalent statement (c) provides a characterisation of the ``jump
intensity measure'' of $\bP$ in terms of $\Phi$. In particular, one
finds that the division kernel of the (unconditioned) marginal process
in a local window $W$
is obtained by a natural averaging over the possible environments
outside~$W$. To state this fact, we recall that the extended measure
$\obP$ and the extended projections $\obpr_W$ have been introduced in
and after (\ref{obP}).
Further, we will need a projection that refers to the cell division
procedure. Namely, for $W\in\PP$ we introduce the projection
%
\begin{equation}
\label{eqtpr} \tpr_W\dvtx (c,H)\mapsto(c\cap W,H)
\end{equation}
on $\CC\times\HH$, which for each $T\in\TT$ maps the set
\[
\tpr_W^{-1}\Delta_W:= \bigl\{(c,H)\dvtx c\in
\CC, H\in\langle c\cap W\rangle \bigr\} 
\]
onto $\Delta_W:=  \{(c,H)\dvtx  c\in\CC_W, H\in\langle c\rangle
\}$.

\begin{corollary}\label{corPhiWfromPhi}
If\vspace*{2pt} a BRT $\bP\in\sP$ admits a cell division kernel $\Phi$, its
projection $\bP_W$ to a window $W\in\PP$
is a BRT in $W$ for the cumulative division kernel $\widehat\Phi_W$,
which is defined as a regular version of the conditional measure
\[
\widehat\Phi_W(s,\bT_{W,s},B):= \EE_{\obP}
\bigl[ \widehat\Phi \bigl( \cdot, \cdot,\tpr_W^{-1}B \bigr)
| \obpr_W=(s,\bT_{W,s}) \bigr].
\]
Here, $B$ is any measurable subset of $\Delta_W$.\vadjust{\goodbreak}
\end{corollary}

Next, we ask for conditions under which a given BRT $\bP\in\sP$
admits a division kernel~$\Phi$.
(The converse question of whether a BRT for a given division kernel
exists will be addressed in Theorem~\ref{thmexistence}.)
As we will see, this is the case whenever $\bP$ is
{l}ocally {a}bsolutely {c}ontinuous with respect to the
STIT model $P\bPi^\Lambda$ with initial distribution $P=\bP\circ
\pi
_0^{-1}$, in that
{\renewcommand{\theequation}{LAC}
\begin{equation}
\label{LAC} 
\bP_W\ll P_{W}
\bPi^\Lambda_W\qquad\mbox{for all $W\in\PP$;}
\end{equation}
}%
recall that $P_{W}\bPi_W^\Lambda=(P\bPi^\Lambda)\circ\bpr
_W^{-1}$ by (\ref{eqSTITlocalisation}).

\setcounter{equation}{1}
We note in passing that (LAC) also implies that the realisations of
$\bP$ almost surely exhibit a ``tame'' geometry. Namely, in the planar
case, they show exactly one type of vertices, the so-called $T$-vertices,
at which an endpoint of a line segment hits an inner point of another
line segment (provided this holds already for the initial
tessellation); see \cite{NW05,RedenbachThaele} and the references
cited therein.

%
\begin{theorem}\label{thmexPhi} For each $\bP\in\sP$ satisfying {(\ref{LAC})}
there exists a division kernel $\Phi$ for $\bP$. Moreover, if
$\bP$ is also invariant under translations, one can achieve that $\Phi
$ is covariant in the sense that
%
\begin{equation}
\label{eqhomogen} \widehat\Phi \bigl(s,\bT_s,\vartheta_x^{-1}
\cdot \bigr)=\widehat\Phi(s,\bT _s-x, \cdot)
\end{equation}
for all $x\in\RR^d$ and all $(s,\bT_{s})\in\oBT$.
\end{theorem}

Stated differently, the preceding theorem says that every $\bP\in\sP
$ satisfying {(\ref{LAC})} is Gibbsian for some $\Phi$. This is analogous to
similar results in standard Gibbs theory (cf.~\cite{GEO}, Theorem~2.30, or \cite{Israel}, Theorem~V.2.2a).
We note further that, by Corollary~\ref{corPhiWfromPhi}, the
covariance property (\ref{eqhomogen}) implies that also
the local division kernels can be chosen to be covariant in the sense that
%
\begin{equation}
\label{eqlocalhomogen} \widehat\Phi_{W} \bigl(s,\bT_{W,s},
\vartheta_x^{-1} \cdot \bigr)=\widehat\Phi
_{W-x}(s, \bT_{W,s}-x, \cdot)
\end{equation}
for all $x\in\RR^d$, $(s,\bT_{W,s})\in\oBT_{W}$ and $W\in\PP$.

\subsection{The inner entropy density}\label{EntrSec}

We now turn to a ``thermodynamic'' investigation of translation invariant
BRTs. Our goal in this subsection is an appropriate notion of entropy.
Recall that the relative entropy, or Kullback--Leibler divergence,
between two probability measures
$\mu$ and $\nu$ on a common measurable space is defined to be
$ \cH(\mu;\nu) = \int\log f \,\dint\mu$
if $\mu\ll\nu$ with Radon--Nikodym density $f$, and $+\infty$ otherwise.
It can also be written in the form
%
\begin{equation}
\label{relent} \cH(\mu;\nu) = \int\varrho(f)\,\dint \nu,
\end{equation}
where $\varrho$ is the nonnegative convex function
%
\begin{equation}
\label{rhofunctiondef} \varrho\dvtx a\mapsto1-a+ a\log a.
\end{equation}
The formula (\ref{relent}) readily shows that $\cH(\mu;\nu)\ge0$
with equality precisely when $\mu=\nu$.
We can also take it as the definition of relative entropy in the more
general case
when $\mu$ and $\nu$ are finite, not necessarily normalised measures.

Further, if $\cA$ is a sub-$\sigma$-field of the underlying $\sigma$-field
then the \emph{conditional relative entropy given $\cA$} is defined as
%
\begin{equation}
\label{eqcondrelent} \cH(\mu;\nu|\cA) = \int\cH \bigl(\mu_\cA( \cdot|x);
\nu_\cA ( \cdot|x) \bigr) \mu(\dint x),
\end{equation}
where $\mu_\cA( \cdot|x)$ and $\nu_\cA( \cdot|x)$ are
conditional measure kernels given $\cA$ for $\mu$ and $\nu$,
respectively (provided such kernels exist).

In our setup, we take the STIT model for $\Lambda$ as our reference measure
and introduce an ``inner'' entropy as follows.
Recall the definition (\ref{eqinitial+partial}) of $\bpr
_W^{0,\partial}$ and its associated $\sigma$-field
$\cB_W^{0,\partial}=\sigma(\bpr_W^{0,\partial})$, and Example
\ref
{excondSTIT} for the definition of the kernel $\bG_W^\Lambda$.

\begin{definition}\label{definnerentropy}
Let $\bP\in\sP$ be a BRT and $W\in\PP$. The \emph{inner entropy
of $\bP$ in $W$} is then defined by
%
\begin{equation}
\label{eqinnentr} \cH_W^\inn(\bP):= \cH \bigl(
\bP_W;P_{W,0}\bPi^\Lambda_W|\cB
_W^{0,\partial
} \bigr)=\cH \bigl(\bP_W;
\bP_W^{0,\partial} \otimes\bG_W^\Lambda \bigr).
\end{equation}
(According to physical convention we should add a minus sign, but here
we prefer to ignore this convention.)
\end{definition}

So, the attribute ``inner'' means that this entropy compares the
evolution of $\bP$ with that of the STIT model only for those
cells that are completely contained in~$W$, while the evolution of all
other cells hitting $W$ is ignored. The idea of using a conditional,
``inner'' entropy without boundary effects has been exploited before by
F\"ollmer and Snell \cite{FoeSn}
in the setup of Gibbs measures on general graphs.

Next, let $[n]:= [-n/2,n/2]^d$ denote the closed centred cube of
volume $n^d$.
For a translation invariant BRT $\bP\in\sP_\Theta$, one expects that
the limiting inner entropy per unit volume
\[
\lim_{n\to\infty}n^{-d} \cH_{[n]}^\inn(
\bP)
\]
exists, which is then called the \emph{inner entropy density} of $\bP
$ (relative to the reference STIT cutting rule $\Lambda$).
Indeed, our result is the following; see (\ref{eqextPalm}) for the
definition of the extended Palm measure $\obP^0$.

%
\begin{theorem}\label{thmentropy}
For each $\bP\in\sP_\Theta$, there exists the possibly infinite limit
\[
h^\inn(\bP):= \lim_{n\to\infty} n^{-d}
\cH_{[n]}^\inn(\bP).
\]
If this limit is finite, $\bP$ admits a translation covariant division
kernel $\Phi$, and
%
\begin{eqnarray}
\label{eqentropy} h^\inn(\bP)&=& \cH \bigl(\obP^0\otimes\Phi;
\obP^0\otimes\Lambda ^{*} \bigr)
\nonumber\\[-8pt]\\[-8pt]
&=& \int\dint\obP^0(s,\bT_s,c) \cH \bigl(\Phi(s,
\bT_s,c, \cdot); \1 _{\langle c\rangle
}\Lambda \bigr).\nonumber
\end{eqnarray}
\end{theorem}

So, the inner entropy density $h^\inn(\bP)$ is the conditional
relative entropy of its division kernel $\Phi$
with respect to $\Lambda^{*}$ when the branching\vspace*{1pt} tessellation and its
cell are
selected according to the extended Palm measure $\obP^0$.
In particular, if $h^\inn(\bP)$ is finite then the division kernel
$\Phi$ of $\bP$ admits a Radon--Nikodym density with respect to
$\Lambda^{*}$.

It is natural to expect that the relative entropy density is affine and
lower semi-continuous with compact level sets, at least under some
natural caveats. We show this for a topology that is finer than the
common weak topology, but is not metrisable.
Namely, we define the \emph{topology $\tau_\loc$ of local
convergence on} $\sP$
as the coarsest topology for which the mapping $\bP\mapsto\int f\,\dint\bP$
is continuous for every bounded local function $f$. It is then clear
that $\sP_\Theta$ is closed in~$\sP$.
Recalling the definition~(\ref{eqiP}) of the hitting intensity
$i_1(\bP)$,
we can then state the following.

%
\begin{theorem}\label{thmlevelsets}
The inner entropy density $h^\inn$ is affine and lower semi-continuous
in $\tau_\loc$. Moreover, for any two constants $0\le\beta,\gamma
<\infty$ and
every $P\in\sP_\Theta(\TT)$, the restricted level set
\[
\sP_{\Theta,P,\beta,\gamma}:= \bigl\{\bP\in\sP_{\Theta}\dvtx \bP \circ\bpr
_0^{-1}=P, i_1(\bP)\le\beta, h^\inn(
\bP)\le\gamma \bigr\}
\]
is compact and sequentially compact in $\tau_\loc$.
\end{theorem}

\subsection{Variational principle and existence}

Here, we change our perspective: rather than describing a given BRT in
terms of its division kernel $\Phi$, we will now suppose that a ``nice''
division kernel $\Psi$ is given in advance. As we will see, $\Psi$~gives rise to an ``inner energy'' functional on $\sP_\Theta$, and further
to an associated ``inner free energy'',
which in turn leads to a variational principle and an existence proof
for BRTs with division kernel $\Psi$.
Here are the conditions on $\Psi$ we need.

%
\begin{definition}\label{defmoderate}
Let us call a division
kernel $\Psi$ \emph{moderate} if there exists a measurable density
function $\psi$ on the set
\[
\bigl\{ (s,\bT_s,c, H)\dvtx 0\le s\le1, \bT_s \in\BB
\TT_s, c\in T_s, H\in{\langle c\rangle} \bigr\}
\]
satisfying
\[
\Psi(s,\bT_s,c,\dint H)= \psi(s,\bT_s,c, H)
\1_{\langle
c\rangle}(H) \Lambda(\dint H)
\]
such that the following holds for all arguments:
\begin{longlist}[(M4)]
\item[(M1)] $\psi$ is \emph{covariant} under translations, in that
\[
\psi(s,\bT_s,c, H)= \psi(s,\bT_s-x,c-x, H-x)
\]
for all $x\in\RR^d$.
\item[(M2)] $\psi$ has \emph{bounded range}, meaning that there
exists a constant $0\le\br=\br_\Psi<\infty$ such that
$\psi(s,\bT_s,c,H) =\psi(s,\bT_s',c,H)$ whenever $\bT_{c+ B_{\br
},s}=\bT_{c+ B_{\br},s}'$.
Here, $B_{\br}$ stands for the closed centred ball with radius $\br$.
\item[(M3)] $\psi$ is \emph{bounded and bounded away from zero},
that is, there exists
a constant $\kappa_\Psi<\infty$ such that $|\log\psi|\le\kappa
_\Psi$.
\item[(M4)]
$\Psi$ is \emph{approximately STIT for large cells}, which is to say
that there exists a constant $\kappa_\Psi'<\infty$ such that
\[
\int_{\langle c\rangle}\Lambda(\dint H) \bigl|\psi(s,\bT _{s},c,H)-1
\bigr| \le\kappa_\Psi'.
\]
[In view of the boundedness assumption \textup{(M3)}, this condition involves
only the cells $c$ for which $\Lambda(\langle c\rangle)$ is large.]
\end{longlist}
\end{definition}

To give some understanding of these assumptions, we set up an analogy
with the unbounded spin systems of classical statistical mechanics. A
branching tessellation $\bT_s$ at some time $s$ may be viewed as a
collection of (unbounded) ``spins'' that consist of cells together with
their prospective cutting hyperplanes and are located at the sites
$m(c)$, $c\in T_s$, of $\RR^d$. The interaction energy of a ``spin''
$(c,H)$ at time $s$ with its surrounding tessellation $\bT_s$ is given
by $-\log\psi(s,\bT_s,c, H)$. Assumption~(M1) then expresses a
natural spatial homogeneity, and (M3) the uniform boundedness of the
local energies. Assumption~(M2) stipulates that the range of interaction is
bounded---in the units of real space, not in the units of the graph of
sites which is random and difficult to handle. In particular, $ \psi
(s,\bT_s,c, H)$ may depend at least on the evolution of all cells
completely inside the $r$-neighbourhood $c+B_r$ of $c$ (which typically
contains most adjacent cells if $r$ is chosen large enough). It may
also depend on the colours of all cells that hit but are not contained
in $c+B_r$; this is because the colour remains unchanged if a cell is
intersected with a region. Finally, (M4) means that the interacting
system is close to the noninteracting reference system unless the
``spins'' are suitably confined. This type of assumption is quite common
for interacting systems of unbounded spins; we need it also here,
although it excludes the possibility that $\Psi$ is scale-invariant.

Obviously, the STIT kernel $\Psi=\Lambda^{*}$ of Example~\ref
{exSTIT} is moderate.
More generally, assumptions (M1)--(M3) hold for the cell-driven
division kernels in (\ref{eqshape-driven})
whenever the density $\varphi$ there is uniformly bounded from above and
away from zero;
(M4) can be achieved by setting $\Psi=\Lambda^{*}$ for cells with
large radius.
In Example~\ref{exmutations,balance,aging}, (M1) trivially holds,
(M2) holds for each $r>0$, and (M3) follows from the assumptions on
$\beta
$ stated there. Example~\ref{exhor-vert} violates the bounded-range
property (M2) in the most extreme way conceivable.

A moderate division kernel induces a functional on $\sP_\Theta$ which,
in analogy to the standard Gibbs theory, may be called
the (negative) \emph{inner energy in $W$ for $\Psi$}, and is defined by
%
\begin{equation}
\label{eqinnenergyW} \cU^\inn_W(\bP;\Psi)=\int\bP(\dint\bT)\sum
_{(s,c,H)\in\sD
(\bT)\dvtx  c\subset W} \log\psi(s,\bT_{s-},c,H);
\end{equation}
$\bP\in\sP_\Theta$.
(Note that in statistical mechanics the energy always appears
negatively in the exponent; so it should not be surprising that $\log
\psi$
shows up here. But we suppress the minus sign.) Likewise, there is a
term which comes from a normalisation (in our case of the distribution
of jump times),
and thus may be considered as an analog of the pressure in statistical
mechanics. In the present setup, however, this quantity is not only a
functional of $\Psi$, but also of the BRTs $\bP\in\sP_\Theta$, namely,
%
\begin{equation}
\label{eqpressureW} \qquad\cV^\inn_W(\bP;\Psi)=\int\dint\obP(s,
\bT_s)\sum_{c\in T_s\dvtx
c\subset W}\int_{\langle c\rangle}
\Lambda(\dint H) \bigl(\psi (s,\bT _{s},c,H)-1 \bigr).
\end{equation}

%
\begin{theorem}\label{thmenergydensity}
For every moderate division kernel $\Psi$ and every $\bP\in\sP
_\Theta
$ admitting a covariant division kernel $\Phi$, the following finite
limits exist and can be identified:
\begin{eqnarray*}
u^\inn(\bP;\Psi) &:=& \lim_{n\to\infty} n^{-d}
\cU _{[n]}^\inn (\bP_{[n]}; \Psi) =\int\log\psi\,\dint\obP^0 \otimes\Phi,
\\
v^\inn(\bP;\Psi) &:=&\lim_{n\to\infty} n^{-d}
\cV _{[n]}^\inn (\bP_{[n]}; \Psi) =\int(\psi-1)\,\dint \obP^0 \otimes\Lambda^{*}.
\end{eqnarray*}
In particular, $|u^\inn(\bP;\Psi)|\le\kappa_\Psi i_1(\bP)$ and
$|v^\inn(\bP;\Psi)|\le\kappa_\Psi' i_1(\bP)$.
\end{theorem}

The energy terms above can be combined with the inner entropy density
to define the \emph{inner excess free energy density of $\bP$ for
$\Psi$}, namely,
%
\begin{equation}
\label{eqfreeenergy} h^\inn(\bP;\Psi):=h^\inn(
\bP)-u^\inn(\bP;\Psi)+v^\inn(\bP;\Psi),
\end{equation}
where the right-hand side is set equal to $+\infty$ if $h^\inn(\bP
)=+\infty$. In fact, in the finite case it will turn out that
%
\begin{equation}
\label{eqrelentrdensityPsi} h^\inn(\bP;\Psi)= \int\dint\obP^0(s,
\bT_s,c) \cH \bigl(\Phi(s,\bT_s,c, \cdot); \Psi(s,
\bT_s,c, \cdot) \bigr),
\end{equation}
where $\Phi$ is a division kernel for $\bP$.
The following variational principle for BRTs is then immediate.

%
\begin{theorem}\label{thmvarprinciple}
Let $\Psi$ be any moderate division kernel. A BRT $\bP\in\sP
_\Theta$
then admits $\Psi$ as its division kernel if and only if $h^\inn(\bP;\Psi)=0$.
\end{theorem}

In particular, this can be used to prove the following result.\vadjust{\goodbreak}

%
\begin{theorem}\label{thmexistence}
For any moderate division kernel $\Psi$ and every $P\in\sP_\Theta
(\TT
)$, there exists a translation invariant BRT $\bP\in\sP_\Theta$ with
initial distribution $P$ and division kernel $\Psi$.
\end{theorem}

There is a large variety of initial random tessellations $P$ to which
this existence theorem applies. The most common examples are the
Poisson--Voronoi tessellation, the Poisson--Delaunay tessellation, and
the Poisson hyperplane tessellation, which are well known to satisfy
the moment condition (\ref{eqi0P}). Further examples are the
Delaunay tessellations that are constructed from tempered Gibbsian
point processes with tile interaction, as studied in \cite{DereudreGeorgii}.
Unfortunately, we cannot allow a start in a degenerate tessellation
with the full space $\RR^d$ as its only cell (of any colour), which
would be of major interest; cf. the discussion in Example~\ref
{exSTIT} on the STIT measure $\bPi^{\Lambda,\infty}$. However and
as already
indicated above,
we can choose the initial distribution $P$ to be the time-$\delta$
distribution of $\bPi^{\Lambda,\infty}$ for some small $\delta>0$.
Up to a time
shift, this means that there exists a BRT with degenerate start for any
moderate division kernel $\Psi$
with an initial cutoff of the form $\psi(s, \cdot, \cdot,
\cdot)=1$ for $0\le s<\delta$ and some small $\delta$.
In the special case of shape-driven tessellations as in~(\ref{eqshape-driven}), the existence of a BRT with degenerate initial
tessellation has been proved in~\cite{ST4} under regularity assumptions.

Since $h^\inn( \cdot;\Psi)$ is affine, the last two theorems
imply the following.

%
\begin{corollary}\label{corface}
For any moderate division kernel $\Psi$, the convex set $\sG_\Theta
(\Psi)$ of all translation invariant BRTs admitting $\Psi$ is a face
of $\sP_\Theta$. That is, the extremal elements of $\sG_\Theta
(\Psi)$
are in fact extremal in $\sP_\Theta$, and thereby ergodic under translations.
\end{corollary}

It is clear that for each ergodic $\bP\in\sG_\Theta(\Psi)$ its
initial distribution $P=\bP\circ\bpr_0^{-1}$ is also ergodic. The
converse holds whenever the correspondence between an initial
distribution $P\in\sP_\Theta(\TT)$ and its associated $\bP\in\sG
_\Theta(\Psi)$
is one-to-one. This, however, does not hold in general, as our
concluding remark shows.

%
\begin{remark}\label{remuniqueness}
\emph{Uniqueness and phase transition}.
It is natural to ask whether or not the convex set $\sG(P,\Phi)$ of
all BRTs with initial distribution $P\in\sP(\TT)$ and division
kernel $\Phi$ is a singleton.
In general, this is not the case. To provide an example, let $d=2$ and
consider the division kernel $\Phi$ defined in equation (\ref
{eqexhor-vert}) of Example~\ref{exhor-vert}. Let $T_\reg=\{
[1]+i\dvtx i\in\ZZ^2\}$ be the regular tessellation of $\RR^2$ into
unit squares and $P\in\sP_\Theta(\TT)$ be given by $P=\int_{[1]}\dint
x\, \delta_{T_\reg-x}$. Further, let $\bP^\hor$ be the STIT tessellation
with initial distribution $P$ and driving measure $\Lambda_\hor$ as
introduced in Example~\ref{exhor-vert}, and define $\bP^\vert$ analogously.
It is then clear that these BRTs live on the spaces $\TT_\hor$, respectively,
$\TT_\vert$ for all positive times. As a consequence, $\bP^\hor$
and $\bP^\vert$ are two distinct BRTs which both belong to $\sG
_\Theta
(\Phi)$ and have the same initial distribution $P$.

Although the infinite-range interaction of this example is somewhat
artificial, we learn that uniqueness does not hold automatically.
Instead, the phenomenon of nonuniqueness, or phase transition, which
is a central issue of statistical mechanics, shows up also in the
present setting. In analogy to standard results on Gibbs measures (cf.
\cite{GEO}, Section~8.3), we will show in Proposition~\ref
{prop1duniqueness} below that uniqueness does hold for suitable
division kernels of bounded range in one spatial dimension.
Uniqueness is also known in the noninteracting case (\ref
{eqshape-driven}) when the initial tessellation is degenerate and the
density $\varphi$ exhibits some regularity properties \cite{ST4}. We
leave it to the future to find sufficient conditions for uniqueness in
higher dimensions as well as examples of bounded-range division kernels
exhibiting phase transition. In fact, Figure~\ref{Bild} (right)
suggests that a phase transition might already occur for the (moderate)
model of Example~\ref{exmutations,balance,aging}.
\end{remark}

\section{Proofs}\label{ProofsSec}

\subsection{Some properties of local BRTs}

Before entering into the proofs of our results, we will establish some
auxiliary properties of local BRTs. First we will express the local evolution
of a BRT in a more explicit form. Throughout this section, we let $W\in
\PP_\cup$ be an arbitrary window.
For any division kernel $\Phi_W$ in $W$, we introduce the abbreviation
%
\begin{equation}
\label{eqtotalmass} \hat\phi_W(a, b;\bT)=\int_{a}^{b}
\hat\phi_W(s,\bT_s)\,\dint s,
\end{equation}
where $0\le a<b\le1$, $\bT\in\BB\TT_W$ and $\hat\phi_W(s,\bT
_s)=\widehat\Phi_W (s, \bT_s, T_s\times\langle W\rangle )$
is as in
Remark~\ref{remconstruction}.
For every $T_0\in\TT_W$, we define a measure on $\BB\TT_W$ by
%
\begin{eqnarray}\label{eqPi^PhiW}
&& \bPi^{\Phi_W}_W(T_0, \cdot)\nonumber
\\
&&\qquad= \sum_{n\ge0} \idotsint_{\{0\le s_1<\cdots<s_n\le1\}}
\dint s_1 \cdots\dint s_n \prod
_{i=1}^n\int\widehat\Phi_W
\bigl(s_i,\bT_{s_{i-1}},\dint (c_i,H_i)
\bigr)
\\
&&\quad\qquad{} \times\exp \bigl[-\hat\phi_W (0, 1;\bT)  \bigr]
\1_{\{\bpr_0(\bT
)=T_0, \sD(\bT)=\{(s_i,c_i,H_i)\dvtx 1\le i\le n\}\}} \delta_\bT,\nonumber
\end{eqnarray}
where $s_0:=0$ and the last indicator function simply means that $\bT$
is the unique branching tessellation which starts from $T_0$ and is
successively defined by the division events $(s_i,c_i,H_i)$; recall
(\ref{division}).

%
\begin{lemma}\label{lemlocalcharacterisation} Let $\Phi_W$ be a
division kernel and
$\bP_W\in\sP(\BB\TT_W)$ a BRT in $W$. Then the following
statements are equivalent.
\begin{longlist}[(a)]
\item[(a)] $\bP_W$ admits the division kernel $\Phi_W$.\vspace*{1pt}
\item[(b)] $\bPi^{\Phi_W}_W(T_0, \cdot)$ is the conditional
distribution of $\bP_W$ given $\bpr_0(\bT)=T_0$.
\item[(c)] For every nonnegative measurable function $f$ on $\oBT_W\times
\CC_W\times\langle W\rangle$,
%
\begin{eqnarray}
\label{eqPotimesPhi} &&\int\bP_W(\dint\bT) \sum
_{(s,c,H) \in\sD(\bT)} f{(s,\bT _{s-},c,H)}
\nonumber\\[-8pt]\\[-8pt]
&&\qquad =\int_0^1\dint s\int
\bP_{W,s}(\dint \bT_s)\int\widehat\Phi_W
\bigl(s,\bT_s, \dint(c,H) \bigr) f{(s,\bT_s,c,H)}.\nonumber
\end{eqnarray}
\item[(d)] $\bP_W$ has the infinitesimal generators $\LL_{W,s}^{\Phi_W}$
of (\ref{LocGen}), in that
the forward equation
%
\begin{equation}
\label{eqlocgenerator} \int g \,\dint\bP_{W,t}-\int g \,\dint\bP_{W,0}=
\int_0^t \dint s\int \dint\bP_{W,s}\,
\LL_{W,s}^{\Phi_W}g
\end{equation}
holds for all $t\in[0,1]$ and bounded measurable functions $g$ on $\BB
\TT_W$.
\end{longlist}
\end{lemma}

\begin{pf} (a) \emph{implies} (b). Recall the recursion steps from
Remark~\ref{remconstruction}.
For $i=0$, $T_0=\bT_0$ is chosen according to the distribution $\bP_{W,0}$.
For each $i\ge1$, conditionally on the first $(i-1)$ division
events, the $i$th division event $(s_i,c_i,H_i)$ for a random
tessellation $\bT$ with division rule $\Phi_W$ is chosen according to
the distribution
\[
\exp \bigl[-\hat\phi_W(s_{i-1}, s_i;\bT)
\bigr]\,\dint s_i\, \widehat\Phi _W \bigl(s_i,
\bT_{s_{i-1}},\dint(c_i,H_i) \bigr);
\]
here we have used that $\bT_s=\bT_{s_{i-1}} $ for $s\in[s_{i-1},s_i)$.
So, on the event $\{|\sD(\bT)| = n\}$, the joint distribution of the
elements of $\sD(\bT)$ is the product of these conditional measures
for $i=1,\ldots,n$, times the probability that $s_n$ is the last
division time before 1, which is
$\exp [-\hat\phi_W(s_{n}, 1;\bT) ]$.

(b) \emph{implies} (c).
Fix any initial tessellation $T_0$. On the set of all $\bT$ with fixed
number $n:=|\sD(\bT)|\ge1$ of division events, the measure $\bPi
^{\Phi_W}_W(T_0, \cdot)$ has a product structure. The elements of
$\sD(\bT)$ can be labeled with $i\in\{1,\ldots,n\}$ according to
their temporal order. For each $i$, we extract the $i$th term from the
product, omit its index $i$, and separate the terms concerning the
division events before and after time $s=s_i$. That is, we write $\sD
(\bT)=\sD'\cup\{(s,c,H)\}\cup\sD''$ and separate the respective
conditional measures. By Fubini's theorem, $s$ can be considered as
fixed. For given~$s$, $\sD'$ has the same distribution as $\sD(\bT
_{s})$, but we still have the condition that the extracted division
event $(s,c,H)$ has rank $i$ in $\sD(\bT)$. This condition disappears
by summing over~$i$ and $n\ge i$. Finally, the division events in
$\sD''$ can be integrated out because these do not enter into
$f{(s,\bT_{s-},c,H)}$, and an integration over $T_0$ gives (c).

(c) \emph{implies} (d).
Applying equation (\ref{eqPotimesPhi}) to the function
\[
f(s,\bT,c,H)=\1_{[0,t]}(s) \bigl[g \bigl(\oslash_{s,c,H}(
\bT_{s}) \bigr)-g(\bT _{s}) \bigr],
\]
we find that for each $t\in[0,1]$
%
\begin{eqnarray*}
&& \int_0^t \dint s\int\dint
\bP_{W,s}\, \LL_{W,s}^{\Phi_W}g
\\
&&\qquad = \int\bP_W(\dint\bT_W)\sum
_{(s,c,H) \in\sD(\bT_W)\dvtx  s\le t} \bigl[g(\bT_{W,s})-g(\bT_{W,s-})
\bigr]
\\
&&\qquad = \int\bP_W(\dint\bT_W) \bigl[g(
\bT_{W,t})-g( \bT_{W,0}) \bigr]
\\
&&\qquad = \int g\,\dint\bP_{W,t}-\int g\,\dint\bP_{W,0}.
\end{eqnarray*}

(d) \emph{implies} (a).
In principle, this follows from \cite{Feller40} which, however, makes
use of a time-continuity condition on $\Phi_W$. We thus indicate a
direct argument. For brevity, we omit most indices referring to $W$.
Let $0\le s<t\le1$ and $\bP_{t|\bT_s}$ be a regular version of the
conditional probability of $\bP_{W,t}$ given $\bpr_{W,s}=\bT_s$.
Using~(\ref{eqlocgenerator}) for a function $g$ of the form $g=\1_A\1
_B$ with $A\in\cB_{W,s}$ and $B\in\sigma (\bT\mapsto(T_{W,u})_{s<
u\le1} )$ and varying $A$, one readily finds that
%
\begin{equation}
\label{eqcondgen} \bP_{t|\bT_s}(B)-\delta_{\bT_s}(B)=\int
_s^t \dint u\int\dint\bP _{u|\bT_s}\,
\LL_{u}^{\Phi}\1_B
\end{equation}
for almost all $\bT_s$.
We now fix $\bT_s$ and think of each $\bT_u$ as an element of $\BB
\TT_W$ which is constant on $[u,1]$. Also, for $\bT\in\BB\TT_W$
we let $\tau(\bT)$ be the time of the first jump of $\bT$ after time
$s$, which is set equal to $\infty$ when there is no jump during $[s,1]$.
Setting $B=\{\bT\in\BB\TT_W\dvtx  \tau(\bT)>1\}$, we then find from
(\ref{eqcondgen}) that
\[
\bP_{t|\bT_s}(\tau>t)= 1-\int_s^t \dint
u\, \hat\phi(u,\bT_s) \bP _{u|\bT_s}(\tau>u)
\]
and, therefore, $\bP_{t|\bT_s}(\tau>t)=\exp[-\hat\phi(s, t;\bT
_s)]$. In other words, $\tau$ has the conditional distribution used in
Remark~\ref{remconstruction}.

Next, let $\Gamma\subset\CC\times\HH$ be measurable and
\[
\oslash_\Gamma(T_s)= \bigl\{\oslash_{c,H}(T_s)
\dvtx c\in T_s, (c,H)\in \Gamma \bigr\}.
\]
Consider
the set $B=\{\bT\in\BB\TT_W\dvtx  T_1\in\oslash_\Gamma(T_s)\}$ and let
$\tau_2(\bT)$ the time of the second jump of $\bT$ after $s$ (which
again is set equal to $\infty$ if no second jump exists). Then $\1
_B(\bT_u)=\1_{\{\tau\le u<\tau_2, T_\tau\in\oslash_\Gamma
(T_s)\}}(\bT)$
for $u>s$,
and (\ref{eqcondgen}) implies that
%
\begin{eqnarray*}
&& \bP_{t|\bT_s} \bigl(\tau\le t<\tau_2, T_\tau\in
\oslash _\Gamma(T_s) \bigr)
\\
&&\qquad = \int_s^t
\dint u\, \widehat\Phi(u,\bT_s,\Gamma) \bP_{u|\bT
_s}(\tau>u)
\\
&&\quad\qquad{} -\int_{(s,t]\times\oslash_\Gamma(T_s)} \bP_{t|\bT
_s} \bigl((\tau,
\bT_\tau)\in\dint(v,\bT_v) \bigr) \int
_v^t\dint u\, \hat\phi(u,\bT_v)
\bP_{t|\bT_v}(\tau>u).
\end{eqnarray*}
Using the explicit conditional distribution of $\tau$ derived above, we
thus find that the first term on the right-hand side of the above
equation is equal to
\[
\int_s^t \dint u\, \widehat\Phi(u,
\bT_s, \Gamma) \exp \bigl[-\hat\phi(s, u;\bT_s) \bigr],
\]
whereas the second term equals
%
\begin{eqnarray*}
&&\int_{(s,t]\times\oslash_\Gamma(T_s)} \bP_{t|\bT_s} \bigl((\tau,\bT
_\tau)\in\dint(v,\bT_v) \bigr) \bP_{t|\bT_v}(v<\tau
\le t)
\\
&&\qquad = \bP_{t|\bT_s} \bigl(\tau_2\le t, T_\tau
\in \oslash _\Gamma(T_s) \bigr).
\end{eqnarray*}
We thus arrive at the equation
\[
\bP_{t|\bT_s} \bigl(\tau\le t, T_\tau\in\oslash_\Gamma
(T_s) \bigr)=\int_s^t \dint u\, \exp
\bigl[-\hat\phi(s, u;\bT_s) \bigr] \widehat\Phi(u,\bT _s,
\Gamma ),
\]
which assures that $(\tau,\bT_\tau)$ has the correct conditional
distribution of Remark~\ref{remconstruction}.
\end{pf}

Note that the joint integrating measure on the right-hand side of
(\ref{eqPotimesPhi}) can be written in the concise form
$\obP_W\otimes\widehat\Phi_W$ or, equivalently, $\obC^{\bP_W}
\otimes\Phi$, where $\obP_W=\obP\circ\obpr_W^{-1}$, $\obP$ is
given by (\ref{obP}) and $\obC^{\bP_W}$ by (\ref
{eqextlocCampbell}). We will switch between both representations
according to convenience.

%
\begin{corollary}\label{corPhiW}
Let $\bP_W,\bQ_W\in\sP(\BB\TT_W)$ be two BRTs in $W$. Suppose
$\bQ_W$ admits a division kernel $\Psi_W$, and $\bP_W\ll\bQ_W$.
Then there exists a measurable function $\varphi_W(s,\bT_s,c,H)\ge0$
such that the measure kernel
\[
\Phi_W(s,\bT_s,c,\dint H):= \varphi_W(s,
\bT_s,c,H) \Psi_W(s,\bT _s,c,\dint H)
\]
is a division kernel for $\bP_W$.
\end{corollary}

\begin{pf}
For brevity, we introduce the measure kernel
%
\begin{equation}
\label{eqbDW} \bD_W(\bT, \cdot) = \sum
_{(s,c,H)\in\sD(\bT)}\delta_{(s,\bT
_{s-},c,H)}
\end{equation}
for $\bT\in\BB\TT_W$. The integration on the left-hand side of
(\ref{eqPotimesPhi}) is then with respect to the measure
$\bP_W\bD_W$. Since $\bP_W\ll\bQ_W$ by assumption, it follows that
$\bP_W\bD_W\ll\bQ_W\bD_W$
with a Radon--Nikodym density $f$, say. It also follows that $\obP
_W\ll\obQ_W$ with a density~$g$.
Define
\[
\varphi_W(s,\bT_s,c,H) = f(s,\bT_s,c,H)/g(s,
\bT_s)
\]
if the denominator is positive, and zero otherwise. Then we obtain,
using equation~(\ref{eqPotimesPhi}) for $(\bQ_W,\Psi_W)$ in place
of $(\bP_W,\Phi_W)$,
\[
\bP_W\bD_W = f(\bQ_W\bD_W) =
f(\obQ_W\otimes\widehat\Psi)= \obP _W\otimes(
\varphi_W\widehat\Psi_W).
\]
In view of Lemma~\ref{lemlocalcharacterisation}, this means that
$\bP_W$ admits the division kernel $\Phi_W:=\varphi_W\Psi_W$.\vadjust{\goodbreak}
\end{pf}

Finally, we look at the first-moment condition (\ref{eqfirstmoment}).

%
\begin{lemma}\label{lemmoment-bound} Let $\Phi_W$ be a division
kernel for $W$, and suppose its total mass satisfies
the uniform bound (\ref{eqbdkernel}). Then
\[
\int\bPi^{\Phi_W}_W(T_0,\dint\bT)
|T_{1}| \le e^\phi|T_0|
\]
for all initial tessellations $T_0\in\TT_W$. Moreover, for every
$\varepsilon
>0$, one can find a number $\tau<\infty$ such that
\[
\int\bPi^{\Phi_W}_W(T_0,\dint\bT)
\bigl(|T_{1}|-\tau|T_0| \bigr)_+ \le\varepsilon |T_0|
\]
for all $T_0\in\TT_W$.
\end{lemma}

\begin{pf}
Recall the description of $\bPi^{\Phi_W}_W(T_0, \cdot)$ in Remark
\ref{remconstruction}.
The algorithm there implies that, for each $i$ with $s_i\le1$, the
holding time
$s_i-s_{i-1}$ dominates an exponential time with parameter
$|T_{s_{i-1}}|\phi$, independently of the previous recursion steps.
Hence, the process $|T_s|$ is stochastically dominated by the
Furry--Yule process $Z_s\in\NN$ with birth rate $\phi$,
namely the pure birth Markov process which starts in $k=|T_0|$ and
jumps from any $j\ge1$ to $j+1$ with rate $j\phi$.
Equivalently, $Z_s$ can be described as the branching process in which
each individual, independently of all others, lives for an exponential
time with parameter $\phi$ and then splits into two offspring. In
particular, the descendance trees of each of the $k$ ancestors are
independent, and it is sufficient to look at the number of descendants
at time $s$ in each of these trees.
This number is known to have the geometric distribution with mean
$e^{\phi s}$. A proof of this can be found, for example, in \cite
{Ross}, Examples 6.4, 6.8 or Exercise 6.11.

As for the second assertion, we conclude from the convexity of the
function $a\mapsto(a-\tau)_+$ that
\[
\bigl(|T_{1}|-\tau|T_0| \bigr)_+ \le\sum
_{c\in T_0}\bigl(|T_{c,1}|-\tau \bigr)_+.
\]
Here, $|T_{c,1}|$ is the number of descendants of the initial cell $c$
at time $1$, which is stochastically dominated by the geometric random
variable $Z_1$. As \mbox{$\EE(Z_1-\tau)_+\to0$} as $\tau\to\infty$,
the result
follows immediately.
\end{pf}

\subsection{Significance and construction of global division kernels}

Here, we prove Theorems~\ref{thmequivalence} and~\ref{thmexPhi}.
We begin with the equivalence theorem (Theorem~\ref{thmequivalence}). Most work
will be necessary for deriving the Gibbs property (b) from statement~(c), the characterisation of the jump intensity measure.
To this end, we need to introduce a modification of the outer
projection for a given window $W\in\PP$, which refers to a larger but
bounded window $W'\in\PP$ rather than the full space $\RR^d$.
Namely, for
$W\subset W'\in\PP$ and any $\bT_{W'}\in\BB\TT_{W'}$ let
%
\begin{equation}
\label{eqlocalout} \bT_{W}^{W',\out} = \bigl( \bigl\{c\in
T_{W',s}\dvtx c\not\subset \operatorname{int}(W) \bigr\}
\bigr)_{0\le s\le1}
\end{equation}
be the evolution of the cells hitting $W'\setminus\operatorname{int}(W)$.
In particular, if $W'=W$ then $\bT_{W}^{W,\out}=\bT_W^\partial$.
We also set $\bT_{W}^{W',0,\out}=(T_{W,0}^\inn,\bT_{W}^{W',\out})$
and let $\cB_W^{W',0,\out}$ denote the $\sigma$-field on $\BB\TT_{W'}$
generated by the mapping $\bT_{W'}\mapsto\bT_{W}^{W',0,\out}$.

\begin{pf*}{Proof of Theorem~\ref{thmequivalence}}
We establish the circle $\mbox{(a)} \Rightarrow\mbox{(c)}
\Rightarrow\mbox{(b)} \Rightarrow\mbox{(a)}$.

(a) \emph{implies} (c).
Let $f$ be a bounded nonnegative measurable function on $\oBT\times
\CC\times\HH$, and suppose there is some $W\in\PP$ such that
(i) $f(s,\bT_s, c, H)$ is $\cB_W$-measurable as a function of $\bT$,
and (ii) $f(s,\bT_s, c, H)=0$ unless $c\subset W$ and $|\sD(\bT
_{W,s})|\le K$ for some $K<\infty$. Define
\[
g(\bT) =\sum_{(s,c,H)\in\sD(\bT)}f(s,\bT_{s-}, c, H).
\]
By assumption, $g$ is bounded and local, and $g(\bT_0)=0$ for every
$\bT$. Moreover, 
if $s$ is not a jump time of $\bT_{W,s}$ then
\[
g \bigl(\oslash_{s,c,H}(\bT_s) \bigr)-g(\bT_s)
=f(s,\bT_s,c,H)
\]
and, therefore, $\LL^\Phi_s g(\bT_s)=\int\dint\widehat\Phi(s,\bT
_s,\cdot,\cdot) f(s,\bT_s,\cdot,\cdot)$.
The forward equation thus shows that
\[
\int g\,\dint\bP= \int_0^1\dint s\int\dint
\bP_s\, \LL^\Phi_s g = \int f \dint(\obP\otimes
\widehat\Phi),
\]
which is (c) for our particular $f$. The case of general $f$ now
follows by letting $K\to\infty$ and using a monotone class argument.

(c) \emph{implies} (b).
Fix a window $W\in\PP$ and let $\bP_W^\inn( \cdot|\bT
_W^{0,\out})$ be a regular version of the conditional distribution of
$\bpr_W^\inn$ under the condition $\bpr_W^{0,\out}=\bT_W^{0,\out
}$. We need to show that this probability kernel almost surely
coincides with
$\bG_W^\Phi( \cdot|\bT_W^{0,\out})$. (In particular, this will
imply that the latter is almost surely well defined.)
Pick any two nonnegative measurable functions
$g(\bT_W^{0,\out})$ and $h(s,\bT_{W,s}^\inn,c,H)$ of the indicated
arguments. We suppose $g$ is local, in that $g(\bT_W^{0,\out})=g(\bT
_W^{W',0,\out})$ for some $W'\in\PP$ containing $W$.
Consider the integral
%
\begin{equation}
\label{eqGibbs1} \int\bP(\dint\bT) g \bigl(\bT_W^{0,\out}
\bigr) \int\bP_W^\inn \bigl(\dint\bS |\bT_W^{0,\out}
\bigr) \sum_{(s,c,H)\in\sD(\bS)}h(s,\bS_{s-},c,H).
\end{equation}
By the definition of conditional distribution, this is equal to
\[
\int\bP(\dint\bT) g \bigl(\bT_W^{0,\out} \bigr) \sum
_{(s,c,H)\in\sD(\bT
_{W}^\inn)}h \bigl(s,\bT_{W,s-}^\inn,c,H \bigr).
\]
In view of the locality assumption on $g$, the integrand actually only
depends on $\bT_{W'}$,
which is a pure jump process and therefore strongly Markov. Writing
$(s_i,c_i,H_i)$ for the $i$th division event of $\bT_{W'}$ in temporal
order, we can rewrite the last expression in the form
%
\begin{eqnarray}\label{eqGibbs2}
&& \sum_{i\ge1}\int \bP_{W'}(
\dint\bT_{W'}) g \bigl(\bT_W^{W',0,\out
} \bigr)
\1_{\{s_i<1, c_i\subset W\}}(\bT_{W'}) h \bigl(s_i,
\bT_{W,s_i-}^\inn,c_i,H_i \bigr).\hspace*{-20pt}
\end{eqnarray}
Now, both $h$ and the indicator function in the integrand are
measurable with respect to the $\sigma$-field $\cB_{W',s_i}$
of all events $A\in\cB_{W'}$ with $A\cap\{s_i\le t\}\in\cB_{W',t}$
for all $t$. By the strong Markov property, we can therefore replace
the function $g(\bT_W^{W',0,\out})$ by its conditional expectation
$g(s_i,\bT_{W',s_i})$ relative to
$\cB_{W',s_i}$. Furthermore, the process $\bT_W^{W',0,\out}$ is
itself a Markov jump process. [In fact, it can be considered
as the BRT in $W'$ for the division kernel which equals $\Phi
_{W'}(s,\bT_{W'},c, \cdot)$ if $c\not\subset W$
and is identically zero otherwise.] This means that
\[
g(s_i,\bT_{W',s_i})=g \bigl(s_i,
\bT_{W,s_i}^{W',0,\out} \bigr)=g(s_i,\bT_{W',s_i-})
\]
when $c_i\subset W$. Altogether, we find that the expression (\ref
{eqGibbs2}) is equal to
\[
\int\bP(\dint\bT) \sum_{(s,c,H)\in\sD(\bT)\dvtx  c\subset
W}g(s,
\bT_{W',s-}) h \bigl(s,\bT_{W,s-}^\inn,c,H \bigr).
\]
By statement (c), this in turn coincides with
\[
\int_0^1\dint s\int\bP_s(\dint
\bT_s) \sum_{c\in T_s\dvtx  c\subset W} \int\Phi(s,
\bT_s,c,\dint H) g(s,\bT_{W',s}) h \bigl(s,
\bT_{W,s}^\inn,c,H \bigr),
\]
which by the Markov property is equal to
\[
\int_0^1\dint s\int\bP(\dint\bT) g \bigl(
\bT_W^{0,\out} \bigr) \sum_{c\in
T_{W,s}^\inn}
\int\Phi(s,\bT_s,c,\dint H) h \bigl(s,\bT_{W,s}^\inn,c,H
\bigr).
\]
Taking conditional expectation with respect to $\bT_W^{0,\out}$ and
using the conditional division kernel $ \Phi_{W}^\inn( \cdot|\bT
_W^\out) $ from Definition~\ref{defcondGibbs}, we can rewrite this as
%
\begin{eqnarray*}
&&\int\bP(\dint\bT) g \bigl(\bT_W^{0,\out} \bigr) \int
_0^1\dint s \int\bP_{W,s}^\inn
\bigl(\dint\bS_{s}|\bT_W^{0,\out} \bigr)
\\
&&\quad{}\times \int\widehat\Phi_{W}^\inn \bigl(s,
\bS_s,c, \dint H |\bT_W^\out \bigr) h(s,\bS
_s,c,H).
\end{eqnarray*}
Since the underlying spaces are Borel, a comparison of (\ref
{eqGibbs1}) with the preceding expression
shows that, for almost all $\bT_W^{0,\out}$,
\[
\obP_{W}^\inn \bigl( \cdot|\bT_W^{0,\out}
\bigr)\otimes\widehat\Phi_{W}^\inn \bigl( \cdot|
\bT_W^\out \bigr) =\int\bP_W^\inn
\bigl(\dint\bS|\bT_W^{0,\out} \bigr) \sum
_{(s,c,H)\in\sD(\bS)}\delta_{(s,\bS_{s-}^\inn,c,H)},
\]
which corresponds to (\ref{eqPotimesPhi}). Lemma~\ref
{lemlocalcharacterisation} therefore implies that
$\bP_{W}^\inn( \cdot|\bT_W^{0,\out})$ coincides with $\bG
_W^\Phi( \cdot|\bT_W^{0,\out})$.
This completes the proof of the Gibbs property (b).

(b) \emph{implies} (a).
Let $g$ be a bounded function which is $\cB_W$-measurable for some
$W\in\PP$.
For any $n$ with $W\subset[n]$ let
%
\begin{equation}
\label{eqAn} A_n= \bigl\{\bT\in\BB\TT\dvtx c\cap W=\varnothing
\mbox{ for all }c\in T_0\mbox{ with }c\cap\partial[n]\ne
\varnothing \bigr\}.
\end{equation}
Obviously, $A_n\in\cB^\partial_{[n],0}$. Also, since for each $T\in
\TT$ the union of all cells hitting $W$ is contained in some $[n]$, we
have $A_n\uparrow\BB\TT$ as $n\to\infty$. Furthermore, if $\bT\in
A_n$ then the inner window
$\inn_{[n]}(s,\bT_{[n]}^\partial)$ [defined in (\ref{eqinnW})]
contains $W$ for all $s$.
Using Definition~\ref{defcondGibbs} and Lemma~\ref
{lemlocalcharacterisation}, we thus obtain that
\[
\int\bG_W^\Phi \bigl(\dint\bS|\bT_W^{0,\out}
\bigr) \biggl[g(\bS_t) - g(\bS_0) -\int
_0^t\dint s\, \LL_s^{ \Phi_W^\inn( \cdot|\bT_{W}^\out)} g(
\bS_s) \biggr] =0
\]
for all $0< t\le1$ and $\bT\in A_n$. Integrating this over $\int_{A_n}\bP(\dint\bT)$,
applying the Gibbs property (b) and letting $n\to\infty$ we arrive
at (a).
\end{pf*}

Before turning to the proof of Corollary~\ref{corPhiWfromPhi} it
is worthwhile to introduce a condensed notation for property (c) of
Theorem~\ref{thmequivalence}. So, we introduce the measure kernel
\[
\bD(\bT, \cdot) = \sum_{(s,c,H)\in\sD(\bT)}\delta_{(s,\bT
_{s-},c,H)}
\]
from $\BB\TT$ to $\oBT\times\CC\times\HH$, which catches the
behaviour of $\bT$ at all cell division events; it is analogous to the
kernel $\bD_W$ within a window $W$, which was defined at~(\ref{eqbDW}). Statement (c) of Theorem~\ref{thmequivalence} can then be
written in the concise form
%
\begin{equation}
\label{eqcshort} \bP\bD:=\int\bP(\dint\bT) \bD(\bT, \cdot) = \obP\otimes \widehat
\Phi.
\end{equation}

\begin{pf*}{Proof of Corollary~\ref{corPhiWfromPhi}}
Fix some $W\in\PP$, recall the definitions of $\tpr_W$ and $\Delta
_W$ at (\ref{eqtpr}), and note that
$\bD_W$ is supported on $\oBT_W\times\Delta_W$. Since
\[
\sD(\bT_W) = \bigl\{(s,c\cap W,H)\dvtx (s,c,H)\in\sD(\bT), (c,H)\in
\tpr_W^{-1}\Delta_W \bigr\},
\]
we have $\bD_W(\bT_W,A\times B) =\bD(\bT, \obpr_W^{-1}A \times
\tpr_W^{-1}B)$ for all $\bT\in\BB\TT$ and all events $A\subset
\oBT_W$ and $B\subset\Delta_W$ and, therefore,
by (\ref{eqcshort})
%
\begin{eqnarray*}
\bP_W\bD_W(A\times B) &=& \bP\bD \bigl(
\obpr_W^{-1}A \times\tpr_W^{-1}B
\bigr) = \obP\otimes\widehat\Phi \bigl(\obpr_W^{-1}A \times
\tpr_W^{-1}B \bigr)
\\
&=& \int_{\obpr_W^{-1}A} \dint\obP(s,\bT_s) \widehat\Phi
\bigl(s,\bT _s,\tpr_W^{-1}B \bigr).
\end{eqnarray*}
So, if $\widehat\Phi_W$ is defined as in the corollary then $\bP_W\bD
_W=\obP_W\otimes\widehat\Phi_W$.
Lemma~\ref{lemlocalcharacterisation} thus shows that $\bP_W$ admits
the kernel $\Phi_W$.
\end{pf*}

Finally, we turn to the construction of division kernels for BRTs
satisfying (\ref{LAC}).

\begin{pf*}{Proof of Theorem~\ref{thmexPhi}}
\emph{Part} 1: \emph{Extension of local division kernels}.
By condition (\ref{LAC}), Corollary~\ref{corPhiW} implies that for each
$W\in\PP$
there exists a cell division kernel $\Phi_W$ in $W$ such that $\bP_W$
is a BRT for $\Phi_W$. (In fact, $\Phi_W$ is absolutely continuous
with respect to $\Lambda^{*}$, but we do not need this here.) So, it merely
remains to construct a global common extension $\Phi$ of these kernels
$\Phi_W$.

By Lemma~\ref{lemlocalcharacterisation} and the preceding proof of
Corollary~\ref{corPhiWfromPhi}, we know that
%
\begin{equation}
\label{eqBTCampbell} \obP_W\otimes\widehat\Phi_W =
\bP_W\bD_W =\bP\bD\circ(\obpr _W\otimes
\tpr_W)^{-1}
\end{equation}
on $\oBT_W\times\Delta_W$.
As a consequence, the measures $\obP_W\otimes\widehat\Phi_W$ with $W\in
\PP$ are consistent in the sense that
%
\begin{equation}
\label{consistent} (\obP_{W'}\otimes\widehat\Phi_{W'})\circ(
\obpr_W\otimes\tpr_W)^{-1} =
\obP_W\otimes\widehat\Phi_W \qquad\mbox{on $
\oBT_W\times\Delta_W$}
\end{equation}
for $W\subset W'\in\PP$. To see that these measures admit a common
extension, we first localise to a fixed window $V\in\PP$. For
$W\supset V$, we write
$\obP_W\otimes\1_V\widehat\Phi_W$ for the restriction of $\obP
_W\otimes\widehat\Phi_W$ to the set $\oBT_W\times\tpr_V^{-1}\Delta_V$.
These measures have a finite total mass that does not depend on $W$.
Indeed, (\ref{eqBTCampbell}) and the first-moment condition (\ref
{eqfirstmoment})
imply that
%
\begin{eqnarray}
\label{eqkernelmass} \bigl\|\obP_W\otimes\1_V\widehat
\Phi_W\bigr\|&=& \int\bP(\dint\bT) \bigl| \bigl\{ (s,c,H) \in\sD(\bT)\dvtx
(c,H)\in \tpr_V^{-1}\Delta_V \bigr\}\bigr|
\nonumber\\[-8pt]\\[-8pt]
&\le&\int\bP(\dint\bT) |T_{V,1}|<\infty.\nonumber
\end{eqnarray}
Since all spaces under consideration are Borel spaces, we can thus
apply an abstract version of the Kolmogorov extension theorem \cite{KP}, Corollary~6.15, to obtain a finite measure on $\oBT\times\tpr
_V^{-1}\Delta_V$,
to be denoted by $\obP\otimes\1_V\widehat\Phi$, which satisfies
\[
(\obP\otimes\1_V\widehat\Phi)\circ(\obpr_W\otimes
\tpr_W)^{-1}= \obP _W\otimes\1_V
\widehat\Phi_W
\]
for all $W\in\PP$ with $W\supset V$.
Since $V$ is arbitrary and
\[
\bigcup_{V\in\PP}\tpr_V^{-1}
\Delta_V=\Delta:= \bigl\{(c,H)\in\CC \times\HH\dvtx H\in\langle c
\rangle \bigr\},
\]
the measures $\obP\otimes\1_V\widehat\Phi$ can be glued together to a
locally finite
measure $\obP\otimes\widehat\Phi$ on $\oBT\times\Delta$ satisfying
%
\begin{equation}
\label{eqconsistent2} (\obP\otimes\widehat\Phi)\circ(\obpr_W\otimes
\tpr_W)^{-1}= \obP _W\otimes\widehat
\Phi_W\qquad\mbox{on $\oBT_W\times\Delta_W$}
\end{equation}
for all $W\in\PP$. As we have indicated by the notation,
disintegration shows that this measure is indeed
the product of $\obP$ with a locally finite measure kernel $\widehat\Phi$.

We next need to show that this $\widehat\Phi$ is really a (cumulative)
division kernel.
By construction, each $\widehat\Phi(s,\bT_s, \cdot)$ is supported on
$\Delta$, which means that each $\Phi(s,\bT_s,c, \cdot)$
is supported on $\langle c\rangle$.
In fact, considering the set $\Delta(T_s)=\{(c,H)\dvtx c\in T_s, H\in
\langle
c\rangle\}$ and its complement $\neg\Delta(T_s)$, we can write
%
\begin{eqnarray*}
&& \int\dint\obP(s,\bT_s) \widehat\Phi \bigl(s,\bT_s,
\neg \Delta (T_s) \bigr)
\\
&&\qquad = \int\dint(\obP\otimes\widehat\Phi) (s,\bT_s,c,H) \lim
_{W\uparrow\RR^d} \bigl(1-\1_{\tpr_W^{-1}\Delta
(T_{W,s})}(c,H) \bigr),
\end{eqnarray*}
and the last term vanishes by (\ref{eqconsistent2}) and Fatou's
lemma. So, we can conclude that, for $\obP$-almost all $(s,\bT_s)$,
$\widehat\Phi(s,\bT_s, \cdot)$ is indeed supported on $\Delta
(T_s)$, as
required.

Finally, combining (\ref{eqBTCampbell}) and (\ref{eqconsistent2})
we find that statement (c) of Theorem~\ref{thmequivalence} holds for
all $f$ of the form
\[
f(s,\bT,c,H)=\1_{\langle c\cap W\rangle}(H) f_W(s,\bT_W,c\cap
W,H)
\]
with some $W\in\PP$ and a measurable function $f_W$. As this can be
extended to general $f$
by a monotone class argument, it follows that $\Phi$ is a division
kernel for~$\bP$.

\emph{Part} 2: \emph{Averaging over translations}.
Suppose now that $\bP$ is invariant under translations, and let $\Phi
$ be a global division kernel for $\bP$, which exists by part~1 of the proof.
For $x\in\RR^d$, let $\vartheta_x$ be the spatial\vspace*{1pt} translation by
$-x$, which acts on $\oBT$ via
$\vartheta_x\dvtx  (s,\bT_s)\mapsto(s,\bT_s-x)$ and, by hypothesis,
leaves $\obP$ invariant. As before, we use the same symbol for the translation
$\vartheta_x\dvtx  (c,H)\mapsto(c-x,H-x)$ acting on $\CC\times\HH$.
For $x\in\RR^d$ let $\widehat\Phi^x(s,\bT_s, \cdot):=
\widehat\Phi(s,\bT_s+x,\vartheta_x^{-1} \cdot)$.
We first claim that
%
\begin{equation}
\label{eqinvariant} \obP\otimes\widehat\Phi= \obP\otimes\widehat\Phi^x
\end{equation}
for all $x$. Indeed, let $A\in\cB(\oBT)$ and
$B\in\cB(\CC\times\HH)$ be arbitrarily given. Then we can write,
using the $\vartheta_x$-invariance of $\bP$ in the first step,
%
\begin{eqnarray*}
\bigl(\obP\otimes\widehat\Phi^x \bigr) (A\times B)&=& (\obP\otimes
\widehat\Phi ) \bigl(\vartheta_x^{-1}A\times
\vartheta_x^{-1}B \bigr)
\\
&=&\int\bP(\dint\bT)\sum_{(s,c,H)\in\sD(\bT)\dvtx  (c-x,H-x)\in B} \1_A(s,
\bT_{s-})
\\
&=&\int\bP(\dint\bT)\sum_{(s,c,H)\in\sD(\bT)\dvtx  (c,H)\in B} \1
_A(s, \bT_{s-})
\\
&=&(\obP\otimes\widehat\Phi) (A\times B),
\end{eqnarray*}
proving (\ref{eqinvariant}). The second and the fourth step come from
Theorem~\ref{thmequivalence}(c), and in the third step
we observed that $\sD(\bT)$ consists of the shifted elements of $\sD
(\bT-x)$ and then used again
the translation invariance of $\bP$. Equation (\ref{eqinvariant})
shows that $\widehat\Phi^x( \cdot, \cdot,B)=\widehat\Phi( \cdot, \cdot,B)$ $\obP$-almost surely for each $B$.

To obtain an everywhere covariant version of $\Phi$, we pick a
countable generator $\cG$ of $\cB(\CC\times\HH)$ which is stable
under intersections. We also let $\Gamma$ be the set of all $(s,\bT
_{s})\in\oBT$ which are such that
$\widehat\Phi(s,\bT_{s},B)=\widehat\Phi^y(s,\bT_{s},B)$ for all $B\in\cG
$ [and thus all $B\in\cB(\CC\times\HH)$] and the countably many
lattice elements $y\in\ZZ^d$. Then $\obP(\Gamma)=1$ by (\ref
{eqinvariant}). We further define
the kernel
\[
\widetilde\Phi(s,\bT_s, \cdot )=\cases{ \widehat\Phi(s,
\bT_s, \cdot), &\quad if $(s,\bT_s)\in\Gamma$,
\vspace*{3pt}\cr
\widehat\Lambda^{ *}(s, \bT_{s}, \cdot), &\quad otherwise,}
\]
where $\widehat\Lambda^{ *}$ is the cumulative STIT kernel of Example
\ref
{exSTIT}. It is then clear that $\widetilde\Phi$ is a version of
$\widehat\Phi$ which satisfies $\widetilde\Phi^y=\widetilde\Phi$ for all
$y\in\ZZ^d$.

To achieve the covariance under the full translation group, we finally define
\[
\overline\Phi= \int_{[1]} \widetilde\Phi^x\, \dint
x,
\]
where $[1]$ is the centred unit cube in $\RR^d$. Then for each $y\in
\RR^d$, we have
\[
\overline\Phi^y= \int_{[1]+y} \widetilde
\Phi^x\, \dint x=\overline\Phi
\]
because $[1]+y$ can be decomposed into finitely many pieces which are
lattice translations of corresponding pieces of $[1]$.
On the other hand, since
\[
\obP\otimes\overline\Phi=\int_{[1]} \obP\otimes\widetilde
\Phi^x\, \dint x=\int_{[1]} \obP\otimes\widehat
\Phi^x\, \dint x=\obP\otimes\widehat\Phi
\]
by (\ref{eqinvariant}), $\overline\Phi$ is also a version of $\widehat\Phi$.
\end{pf*}

We conclude this subsection with two supplements to the preceding
proofs. The first deals with the consistency properties (\ref
{consistent}), respectively, (\ref{eqconsistent2}), and the second with
a localised version of the Gibbs property.

%
\begin{remark}[({Consistency of kernel densities})]\label{remphiconsistency}
Consider two windows $W,W'\in\PP$ with $W\subset W'$ and a BRT $\bP
\in\sP$ satisfying $\bP_{W'}\ll\bP_{W',0}\bPi_{W'}^\Lambda$.
Let $\varphi
_W$ and $\varphi_{W'}$ be the $\Lambda$-densities of the division kernels
$\Phi_{W}$ and~$\Phi_{W'}$ of $\bP_{W}$ and $\bP_{W'}$, which exist
by Corollary~\ref{corPhiW}. The consistency equation (\ref
{consistent}) then means that
\[
\varphi_W(s,\bT_{W,s},c,H) = \int\bP_{W',s|\bT_{W,s}}(
\dint\bT _{W',s}) \varphi_{W'} \bigl(s,\bT_{W',s},
\pi_{W}^{-1}( c,T_{W',s}),H \bigr)
\]
for $\obP_W\otimes\Lambda^{*}_W$ almost all arguments. Here,
$\bP_{W',s|\bT_{W,s}}$ stands for a regular version of the
conditional distribution of $\bT_{W',s}$ under $\bP_s$
given $\bT_{W,s}$, and $c'=\pi_{W}^{-1}( c,T_{W',s})$ is the unique
element of $T_{W',s}$ with $c'\cap W=c$.
An analogous statement holds for $W'=\RR^d$ when $\bP$ admits a
global division kernel with a
$\Lambda$-density.
\end{remark}

%
\begin{remark}[(Conditional BRTs with finite horizon)]\label{remlocalisedGibbs}
Fix two windows $W,W'\in\PP$ with $W\subset W'$ and let $\bP_{W'}$
be a BRT in $W'$ for a division kernel
$\Phi_{W'}$. Furthermore, replace $\RR^d$ by $W'$ in Definition~\ref
{defcondGibbs} and use the conditional division kernel
\[
\Phi_W^\inn(s,\bS_s,c, \cdot|
\bT_{W'}):=\Phi_{W'} \bigl(s,\bS _s\cup
\bT_{W,s}^{W',\out},c, \cdot \bigr),
\]
to obtain a conditional BRT $\bG^{\Phi_{W'}}_W( \cdot|\bT
_{W}^{W',0,\out})$ in $W$; here we use the notation
introduced in and after (\ref{eqlocalout}).
The arguments in the proof of Theorem~\ref{thmequivalence}, (c)${}\Rightarrow{}$(b), then show that
the kernel $\bG^{\Phi_{W'}}_W$ is a regular version of the
conditional distribution of $\bpr_W^\inn$ for $\bP_{W'}$
under the condition $\cB_W^{W',0,\out}$.
\end{remark}

\subsection{On the inner entropy density}\label{secEntropyProof}

We first recall some standard properties of relative entropy. A basic
fact is
the variational formula, which states that
%
\begin{equation}
\label{eqvarform} \cH(\mu,\nu) = \sup_g \biggl[\int g\,\dint
\mu- \log\int e^g\,\dint \nu \biggr],
\end{equation}
for any two probability measures $\mu,\nu$ on a common measurable
space. Here,
the supremum extends over all bounded measurable functions on this
space; see \cite{Varadhan}, Theorem~4.1.
On the one hand, the variational formula implies the useful estimate
%
\begin{equation}
\label{eqvarform1} \int g\,\dint\mu\le\cH(\mu,\nu) + \log\int e^g\,\dint
\nu
\end{equation}
for any nonnegative measurable $g$. On the other hand, using Jensen's
inequality it follows immediately that $\cH(\mu,\nu)$ is jointly
measure convex in both arguments simultaneously. Also, it is jointly
lower semi-continuous in $(\mu,\nu)$ in the topology generated by the
integrals of bounded measurable functions. Finally, if $\mu$ and $\nu
$ are restricted to a sub-$\sigma$-field $\cA$ then relative entropy is
increasing in $\cA$. Alternative proofs of these facts can be found in
\cite{GEO}, Section~15.1, for example.
Since $\cH(a\mu;b\nu) =a\cH(\mu,\nu)+b \varrho(a/b)$ for
$a,b>0$ and
normalised $\mu,\nu$ [recall (\ref{rhofunctiondef})], the last facts
extend directly to the case of finite measures,
except that the convexity then holds in the first argument only.

Now, turning to the proof of Theorem~\ref{thmentropy} we proceed with
a series of lemmas.
Let $\bP\in\sP_\Theta$ be arbitrarily given and $P=\bP\circ\bpr
_0^{-1}$ its initial distribution. We can clearly assume that
%
\begin{equation}
\label{eqentr<infty} \liminf_{n\to\infty}n^{-d}
\cH_{[n]}^\inn(\bP) <\infty
\end{equation}
because otherwise there is nothing to show.

%
\begin{lemma}\label{<infty=>LAC} Condition (\ref{eqentr<infty})
implies condition (\ref{LAC}).
\end{lemma}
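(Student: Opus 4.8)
The plan is to deduce the local absolute continuity $\bP_W\ll P_{W,0}\bPi^\L_W$ for an arbitrary fixed window $W\in\PP$ by comparing $\bP$ with the STIT reference model inside a very large cube and then letting that cube exhaust $\RR^d$. Note that the quantitative $n^{-d}$‑normalisation plays no r\^ole here: condition \eqref{eq:entr<infty} merely yields an increasing sequence $n_k\to\infty$ with $\cH^\inn_{[n_k]}(\bP)<\infty$ for every~$k$. Writing $V_k:=[n_k]$, finiteness of $\cH^\inn_{V_k}(\bP)=\cH(\bP_{V_k};\bP^{0,\partial}_{V_k}\otimes\bG^\L_{V_k})$ (Definition~\ref{def:inner_entropy}) says precisely that $\bP_{V_k}\ll\nu_k:=\bP^{0,\partial}_{V_k}\otimes\bG^\L_{V_k}$.

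Fix $W$ and a set $A\in\cB_W$ with $(P_{W,0}\bPi^\L_W)(A)=0$; the goal is $\bP(\bpr_W\in A)=0$. For $k$ large enough that $W\subset\textup{int}(V_k)$, I would introduce the event $E_k$ that no cell of the time‑$0$ tessellation in $V_k$ meeting $W$ reaches $\partial V_k$. Since the time‑$0$ tessellation of $V_k$ is recovered from $\bpr^{0,\partial}_{V_k}$, one has $E_k\in\cB^{0,\partial}_{V_k}$; since cells only shrink along the branching process and $T_0$ is $\bP$‑a.s.\ locally finite (only finitely many of its cells meet $W$, each of finite diameter), the $E_k$ increase to a set of full $\bP$‑measure; and on $E_k$ every cell that ever meets $W$ is contained in $\textup{int}(V_k)$, so there $\bpr_W$ is a measurable function of the inner evolution $\bpr^\inn_{V_k}$ alone. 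These are the routine checks, all immediate from local finiteness and the monotonicity of cells under division.

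The heart of the argument is to identify the image of the conditional STIT kernel $\bG^\L_{V_k}(\,\cdot\,|\omega)$ under $\bpr_W$ for $\omega\in E_k$. Using the explicit description in Example~\ref{ex:cond_STIT} — a superposition of independent STIT tessellations grown from the inner initial cells of $V_k$ and from the cells immigrating through $\partial V_k$ — I would observe that on $E_k$ every immigrated cell, and hence each of its descendants, is a subset of a cell meeting $\partial V_k$ and thus never meets $W$; so the restriction to $W$ of the inner evolution is simply the superposition of the independent STIT pieces grown from those cells of $T_0$ that meet $W$. By the STIT construction of Example~\ref{ex:STIT} and the localisation property \eqref{eq:STIT_localisation}, this superposition has law $\bPi^\L_W(T_{W,0},\,\cdot\,)$, where $T_{W,0}$ is the restriction of $T_0$ to $W$; in particular it depends on $\omega$ only through $T_{W,0}$. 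Disintegrating $\nu_k$ over $\cB^{0,\partial}_{V_k}$ then gives
$\nu_k(\{\bpr_W\in A\}\cap E_k)\le\int\bPi^\L_W(T_{W,0},A)\,\bP^{0,\partial}_{V_k}(\dint\omega)=(P_{W,0}\bPi^\L_W)(A)=0$,
since $T_{W,0}$ has law $P_{W,0}$ under $\bP$. Hence $\bP(\{\bpr_W\in A\}\cap E_k)=0$ by $\bP_{V_k}\ll\nu_k$, and letting $k\to\infty$ along the increasing exhaustion $(E_k)$ yields $\bP_W(A)=0$. As $W$ and $A$ were arbitrary, (LAC) follows.

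I expect the main obstacle — and the reason one genuinely has to pass to larger and larger windows rather than work with $W$ directly — to be that $\cH^\inn_W(\bP)<\infty$ controls only the conditional law of the inner part given the initial‑plus‑boundary data $\cB^{0,\partial}_W$; it carries no information whatsoever about the law of the boundary evolution itself, which is exactly the additional ingredient needed for full local absolute continuity. The ``no long cell'' events $E_k$ are precisely the device that converts the conditional statement into an unconditional one: on $E_k$ all of $\bT$ that influences $\bpr_W$ sits inside $\textup{int}(V_k)$, where the reference dynamics is honestly STIT, while the troublesome boundary $\partial V_k$ recedes to infinity as $k\to\infty$. Getting the bookkeeping of immigrated cells and their descendants right in the third paragraph, so that the conditional law of $\bpr_W$ really collapses to $\bPi^\L_W(T_{W,0},\,\cdot\,)$ on $E_k$, is the one step that needs genuine care.
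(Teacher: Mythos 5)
Your proof is correct and follows essentially the same route as the paper: both fix $W$, introduce the event (your $E_k$, the paper's $A_n$) that no cell of $T_0$ meeting $W$ reaches the boundary of the large cube, observe that these events exhaust $\BB\TT$, note that on them the conditional STIT kernel $\bG^\L_{[n]}$ restricted to $\cB_W$ collapses to $\bPi^\L_W(T_{W,0},\,\cdot\,)$, and then use the conditional absolute continuity implied by finiteness of $\cH^\inn_{[n]}(\bP)$. The only cosmetic difference is that the paper phrases the conclusion via a null set $B$ with $P\bPi^\L(B)=0$ and notes $A_n\in\cB_{[n]}^\partial$ rather than $\cB_{[n]}^{0,\partial}$, which changes nothing.
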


\begin{pf} We fix a window $W\in\PP$ and consider the sets $A_n$
defined in (\ref{eqAn}).
Recall that for $\bT\in A_n$ the inner window
$\inn_{[n]}(s,\bT_{[n]}^\partial)$ contains $W$ for all $s$, so that
\[
\bG^\Lambda_{[n]} \bigl(B|\bT_{[n]}^{0,\partial}
\bigr)=\bPi^\Lambda_{W}(T_{W,0},B)
\]
for all $B \in\cB_{W}$.
Suppose now that $\cH_{[n]}^\inn(\bP) <\infty$ and recall the
notation of Definition~\ref{definnerentropy}.
Writing $\bP_{[n]}^\inn( \cdot|\bT_{[n]}^{0,\partial})$ for a
regular\vspace*{1pt} conditional distribution of
$\bpr_{[n]}^\inn$ under the condition $\bpr_{[n]}^{0,\partial}=\bT
_{[n]}^{0,\partial}$, we then have
\[
\bP_{[n]}^\inn \bigl( \cdot|\bT_{[n]}^{0,\partial}
\bigr)\ll\bG^\Lambda _{[n]} \bigl( \cdot|\bT_{[n]}^{0,\partial}
\bigr)\qquad\mbox{for $\bP $-almost all $\bT$.}
\]
Therefore, if $B \in\cB_{W}$ is such that $P\bPi^\Lambda(B)=0$ then
$\bPi^\Lambda_{W}(T_{W,0},B)=0$ for almost all $\bT$,
and thus $\bP_{[n]}^\inn(B|\bT_{[n]}^{0,\partial})=0$ for almost
all $\bT\in A_n$. Hence,
\[
\bP(B\cap A_n)=\int_{A_n}\bP(\dint\bT)
\bP_{[n]}^\inn \bigl(B|\bT _{[n]}^{0,\partial}
\bigr)=0.
\]
Letting $n\to\infty$ through the integers $n$ with $\cH_{[n]}^\inn
(\bP) <\infty$, we thus obtain that $\bP(B)=0$.
So, we have shown that $\bP\ll P\bPi^\Lambda$ on $\cB_{W}$, and the
proof is complete.
\end{pf}

Combining the preceding lemma with Theorem~\ref{thmexPhi}, we can
conclude that $\bP$ admits a global division kernel
$\Phi$. Hence, for each window $W\in\PP$, the conditional
distribution of $\bpr_W^\inn$ given $\cB_W^{0,\partial}$ under $\bP
$, respectively, $P\bPi^\Lambda$ are equal to
the localised conditional BRTs $\bG^{\Phi_{W}}_W( \cdot|\bT
_{W}^{0,\partial})$, respectively, $\bG^{\Lambda}_W( \cdot|\bT
_{W}^{0,\partial})$ introduced
in Remark~\ref{remlocalisedGibbs}, respectively, Example~\ref
{excondSTIT}. It follows that
%
\begin{equation}
\label{eqinnerGibbsentr} \cH_W^\inn(\bP)= \int\bP(\dint\bT) \cH
\bigl(\bG^{\Phi
_{W}}_W \bigl( \cdot|\bT_{W}^{0,\partial}
\bigr);\bG^{\Lambda}_W \bigl( \cdot |\bT _{W}^{0,\partial}
\bigr) \bigr).
\end{equation}
This expression can be specified as follows.

%
\begin{lemma}\label{lemWentropy}
Under (\ref{eqentr<infty}), we have for each $W\in\PP$
%
\begin{equation}
\label{eqinnentrW} \cH_{W}^\inn(\bP) = \int\dint
\obP_W(s,\bT_W) \sum_{c\in
T_{W,s}^\inn}
\cH \bigl(\Phi_W(s,\bT_{W,s},c, \cdot); \1_{\langle c\rangle
}
\Lambda \bigr).
\end{equation}
\end{lemma}

\begin{pf}
By Lemma~\ref{<infty=>LAC} and Corollary~\ref{corPhiW}, $\Phi_W$
is absolutely continuous
with respect to $\Lambda$; we write $\varphi_W$ for the associated
Radon--Nikodym density. Using the equivalence of Lemma
\ref{lemlocalcharacterisation}(a) and (b) separately for the
intervals between the ``immigration times'' (\ref{eqjumptimes}), we
obtain the following identity for the
Radon--Nikodym density of $\bG^{\Phi_{W}}_W( \cdot|\bT
_{W}^{0,\partial})$ relative to $\bG^{\Lambda}_W( \cdot|\bT
_{W}^{0,\partial})$:
%
\begin{eqnarray}
\label{eqlocaldensity} \log\frac{\dint\bG^{\Phi_{W}}_W( \cdot|\bT_{W}^{0,\partial
})}{\dint\bG^{\Lambda}_W( \cdot|\bT_{W}^{0,\partial})} \bigl(\bT _W^\inn
\bigr) &=& \bigl[ \hat\lambda_W^\inn \bigl(0,1;
\bT_W^\inn \bigr)-\hat\phi_W^\inn
(0,1;\bT_W) \bigr]
\nonumber\\[-8pt]\\[-8pt]
&&{} + \sum_{(s,c,H) \in\sD(\bT_W^\inn)} \log\varphi _W(s,\bT
_{W,s-},c,H),\nonumber
\end{eqnarray}
where $\bT_W\in\BB\TT_W$, $\sD(\bT_W^\inn)$ is the associated
set of division events with $c\subset W$, and similarly to (\ref
{eqtotalmass}),
\[
\hat\phi_W^\inn(0, 1;\bT_W)=\int
_{0}^{1}\dint s \sum
_{c\in
T_{W,s}^\inn}\Phi_W \bigl(s,\bT_{W,s},c,
\langle c\rangle \bigr)
\]
and $\hat\lambda_W^\inn(0, 1;\bT_W^\inn)=\int_{0}^{1}\dint s
\sum_{c\in T_{W,s}^\inn}\Lambda(\langle c\rangle) $.

Now, $\cH_{W}^\inn(\bP)$ is simply the integral of (\ref{eqlocaldensity})
over
\[
\bP_W^{0,\partial} \bigl(\dint\bT_W^{0,\partial}
\bigr) \bG^{\Phi
_{W}}_W \bigl(\dint\bT_W^\inn|
\bT_{W}^{0,\partial} \bigr)=\bP_W(\dint\bT
_W).
\]
By the equation in Lemma~\ref{lemlocalcharacterisation}(c),
integration of the last term in (\ref{eqlocaldensity}) gives the contribution
\[
\int\dint\obP_W(s,\bT_{W,s}) \sum
_{c\in T_{W,s}^\inn} \int_{\langle
c\rangle} \Phi_W(s,
\bT_{W,s},c,\dint H) \log\varphi_W(s,\bT_{W,s},c,H).
\]
In terms of the measure $\obC^{\bP_W,\inn}$ which is defined by
restricting the sum in (\ref{eqextlocCampbell})
to the cells $c\in T_{W,s}^\inn$, this can be rewritten in the concise form
$ \int\dint\obC^{\bP_W,\inn} \otimes\Lambda^{*} \varphi
_W\log\varphi_W$.
Likewise, we have
%
\begin{eqnarray*}
&&\int\bP_W(\dint\bT_W) \bigl[ \hat
\lambda_W^\inn \bigl(0,1;\bT _W^\inn
\bigr)-\hat\phi_W^\inn(0,1;\bT_W) \bigr]
\\
&&\qquad=\int\dint\obC^{\bP_W,\inn} \otimes\Lambda^{*} [1-\varphi
_W ].
\end{eqnarray*}
Consequently, the $\bP_W$-integral of (\ref{eqlocaldensity}) is
equal to
$ \int\dint\obC^{\bP_W,\inn} \otimes\Lambda^{*} \varrho(
\varphi_W)$, and~(\ref{eqinnentrW}) follows by recalling (\ref{relent}).
\end{pf}

The final step in the proof of Theorem~\ref{thmentropy} is the following.
Let $h^\inn(\bP)$ be defined by (\ref{eqentropy}).
For brevity, we write $h_n^\inn(\bP)=n^{-d} \cH_{[n]}^\inn(\bP)$.

%
\begin{lemma}\label{lementrlb} Under (\ref{eqentr<infty}),
\[
\lim_{n\to\infty}h_n^\inn(\bP)= \sup
_{n\ge1}h_n^\inn(\bP)= h^\inn (
\bP).
\]
\end{lemma}

\begin{pf} We claim first that $\liminf_{n\to\infty}h_n^\inn(\bP
)\ge
h^\inn(\bP)$.
We pick any $0<\eta<1$ and $\ell<\infty$
and restrict the sum in (\ref{eqinnentrW}) for $W=[n]$ to the cells
of $T_s$ with midpoint in
$[n\eta]$ and radius at most $\ell$.
More precisely, we let $L_n\subset\RR^d$ be such that the set $\{
[\eta]+x\dvtx x\in L_n\}$
is a tessellation of the cube $[n\eta]$. (Note that $|L_n| = n^d$.)
Also, we take any $0<\varepsilon<1-\eta$ and let $n$ be so large
that $\eta
+\ell<\varepsilon n$. Then we can write
\[
h_n^\inn(\bP)\ge n^{-d}\sum
_{x\in L_n}h_{n,\eta,\ell}(x)
\]
with
\[
h_{n,\eta,\ell}(x):=\int\dint\obP(s,\bT_{s})\sum
_{c\in T_{s}\cap
\Gamma_{\eta,\ell}(x)} \cH \bigl(\Phi_{[n]}(s,\bT_{[n],s},c,
\cdot); \1_{\langle
c\rangle}\Lambda \bigr),
\]
where $\Gamma_{\eta,\ell}(x)$ is the set of all cells $c$ satisfying
$m(c)\in[\eta]+x$ and $r(c)\le\ell$.
Now, whether or not a cell $c\in T_{[n],s}$ belongs to $\Gamma_{\eta,\ell
}(x)$ can be decided by looking at the restriction $T_{[\varepsilon n]+x,s}$.
So, using Remark~\ref{remphiconsistency} and Jensen's inequality
together with the translation invariance of $\bP$ and the covariance
equation (\ref{eqlocalhomogen}) we find for each $x\in L_n$,
%
\begin{eqnarray*}
h_{n,\eta,\ell}(x)&=&\int\dint\obP_{[\varepsilon n]+x}(s,\bT _{[\varepsilon
n]+x,s})
\\
&&{}\times \sum
_{c\in T_{s}\cap\Gamma_{\eta,\ell}(x)} \int\bP_{[n],s|\bT_{[\varepsilon n]+x,s}}(\dint
\bT_{[n],s})
\\
&&\hspace*{60pt}{} \times\int_{\langle c\rangle}\Lambda(\dint H) \varrho \bigl( \varphi
_{[n]}(s,\bT_{[n],s},c,H) \bigr)
\\
&\ge&\int\dint\obP_{[\varepsilon n]}(s,\bT_{[\varepsilon
n],s})\sum
_{c\in T_{s}\cap
\Gamma_{\eta,\ell}} \int_{\langle c\rangle}\Lambda(\dint H)
\varrho \bigl(\varphi _{[\varepsilon n]}(s,\bT_{[\varepsilon
n],s},c,H) \bigr)
\\
&=& \cH \bigl(\1_{\Gamma_{\eta,\ell}}\obC^{\bP} \otimes\Phi\mid
_{\cB_{[\varepsilon n]}}; \1_{\Gamma_{\eta,\ell}}\obC^{\bP} \otimes
\Lambda^{*}\mid _{\cB_{[\varepsilon n]}} \bigr).
\end{eqnarray*}
In the last expression, $\cB_{[\varepsilon n]}$ is identified with
the $\sigma
$-field that is generated by the projection
$\obpr_{[\varepsilon n]}\otimes\mathrm{id}$,
and $\Gamma_{\eta,\ell}:=\Gamma_{\eta,\ell}(0)$ is viewed as a
set in the
product space $\oBT\times\CC\times\HH$. In the limit as $n\to
\infty$,
Perez' continuity theorem for relative entropies (cf.~\cite{GEO}, Proposition~15.6) implies that the last relative entropy
converges to
%
\begin{eqnarray*}
&&\cH \bigl(\1_{\Gamma_{\eta,\ell}}\obC^{\bP}\otimes\Phi; \1
_{\Gamma
_{\eta,\ell}} \obC^{\bP}\otimes\Lambda^{*} \bigr)
\\
&&\qquad = \int\dint\obP(s,\bT_s)\sum_{c\in T_s\cap\Gamma_{\eta,\ell}}
\cH \bigl(\Phi(s,\bT_{s},c, \cdot); \1_{\langle c\rangle}\Lambda \bigr).
\end{eqnarray*}
By the time-integrated version of the Palm formula (\ref{Palm}) and
the shift covariance of $\Phi$ and $\Lambda$, the last integral is
equal to
\[
h_{\eta,\ell}:=\eta^d \int_{\{r(c)\le\ell\}} \dint
\obP^0(s,\bT _s,c) \cH \bigl(\Phi(s,\bT_{s},c,
\cdot); \1_{\langle c\rangle
}\Lambda \bigr).
\]
Altogether, we find that $\liminf_{n\to\infty}h_n^\inn(\bP) \ge
h_{\eta,\ell}$, and the claim follows by letting
$\eta\to1$ and $\ell\to\infty$.

It remains to show that $h_n^\inn(\bP)\le h^\inn(\bP)$. By (\ref
{eqentr<infty}) and the above, $h^\inn(\bP)<\infty$. This implies
that the kernel $\Phi$ admits a Radon--Nikodym density relative to
$\Lambda$.
Applying Remark~\ref{remphiconsistency} and Jensen's inequality as
above, we conclude from (\ref{eqinnentrW})
that
\[
h_n^\inn(\bP)\le n^{-d}\int\dint\obP(s,
\bT_s)\sum_{c\in T_s\dvtx
c\subset[n]} \cH \bigl(\Phi(s,
\bT_{s},c, \cdot); \1_{\langle
c\rangle
}\Lambda \bigr).
\]
The condition under the sum above implies that $m(c)\in[n]$. Using
again (\ref{Palm}) in its time-integrated version,
we thus find that the last expression is not larger than $h^\inn(\bP)$.
The proof is thus complete.
\end{pf}

%
\begin{remark}\label{rementropyrange}
The inner entropy of a BRT
$\bP$ in a window $W\in\PP$ can be defined by
considering the tessellations not only in $W$ but also in some
neighborhood of $W$. Namely, if $\Phi$ is a division kernel for $\bP$
and $\br>0$, one can introduce the quantity
\[
\cH_W^{\br,\inn}(\bP) = \int\bP(\dint\bT) \cH \bigl(\bG
^{\Phi_{W+B_\br}}_W \bigl( \cdot|\bT^{W+B_\br,0,\out}_W
\bigr); \bG^{\Lambda}_W \bigl( \cdot|\bT_W^{0,\partial}
\bigr) \bigr),
\]
which is called the \emph{inner entropy of $\bP$ in $W$ with horizon
$\br$}.
Here, the first of the conditional BRTs $\bG$ is as in Remark~\ref
{remlocalisedGibbs}. A glance at the preceding proof then shows that
Lemma~\ref{lementrlb} can be extended to yield
\[
\lim_{n\to\infty}n^{-d} \cH_{[n]}^{\br,\inn}(
\bP)= h^\inn(\bP).
\]
\end{remark}

Next, we turn to the proof of Theorem~\ref{thmlevelsets}, which is
split into two lemmas.

%
\begin{lemma}\label{lemaffine} The inner entropy density $h^\inn$ is affine.
\end{lemma}

\begin{pf}
As noticed after (\ref{eqvarform1}), relative entropy is a jointly
convex function of probability measures.
This shows that the inner entropies $\cH^\inn_W(\bP)=\cH(\bP_W;\bP
_W^{0,\partial}\otimes\bG_W^\Lambda)$ are convex in $\bP$,
and so is their limit $h^\inn(\bP)$.
The proof is therefore completed by showing\vspace*{2pt} that this limit is also concave.
So, let $\bP,\bP'\in\sP_\Theta$, $0<a<1$, $\widehat\bP=a\bP
+(1-a)\bP
'$, and assume without loss of generality that
$h^\inn(\widehat\bP)<\infty$. By Lemma~\ref{lementrlb}, it follows
that $h_n^\inn(\widehat\bP)<\infty$ for all $n$.
In particular, $\widehat\bP_{[n]}\ll\widehat\bP^{0,\partial}_{[n]}\otimes
\bG^{\Lambda}_{[n]}$ with a Radon--Nikodym density $g_n$.
The Radon--Nikodym theorem further implies that $\bP_{[n]}\ll\widehat\bP
_{[n]}$ and $\bP_{[n]}'\ll\widehat\bP_{[n]}$ with densities
$f_n$ and $f_n'$, respectively. It is clear that $af_n+(1-a)f_n'=1$
almost surely for $\widehat\bP_{[n]}$.
Moreover, it follows that $\bP_{[n]}^{0,\partial}=f_{n}^{0,\partial
}\widehat\bP_{[n]}^{0,\partial}$
for a suitable Radon--Nikodym density $f_{n}^{0,\partial}$. We
conclude that
$\bP_{[n]} = (f_ng_n/f_n^{0,\partial}) \bP^{0,\partial
}_{[n]}\otimes\bG^{\Lambda}_{[n]}$.
Since $f_n\le1/a$ and $\int\dint\bP_{[n]} \log f_n^{0,\partial}=
\cH(\bP_{[n]}^{0,\partial};\widehat\bP_{[n]}^{0,\partial})\ge0$,
this gives
\[
n^d h_n^\inn(\bP)=\int\dint\bP_{[n]}
\log\frac
{f_ng_n}{f_n^\partial} \le\int\dint\bP_{[n]} \log g_n + \log
\frac{1}a.
\]
Together with the analogous inequality for $\bP'$, we finally end up
with the estimate
\[
a h_n^\inn(\bP) + (1-a) h_n^\inn
\bigl(\bP' \bigr)\leq n^{-d}\int\dint \widehat
\bP_{[n]}\log g_n+o(1)= h_n^\inn(
\widehat\bP) + o(1).
\]
The result thus follows from Lemma~\ref{lementrlb} by letting ${n\to
\infty}$.
\end{pf}

As for the topological properties of $h^\inn$, we note first that its
lower semi-continuity is a direct consequence of Lemma
\ref{lementrlb} and the lower semi-continuity of relative entropy;
recall the discussion below (\ref{eqvarform1}).
Since $\bT\mapsto|T_{[1],1}|$ is the supremum of bounded local
functions, it is also evident that the hitting intensity $i_1( \cdot)$
is lower semi-continuous. It follows that the restricted level sets
$\sP_{\Theta,P,\beta,\gamma}$ (as introduced in Theorem~\ref
{thmlevelsets})
are closed. The following lemma, which can be viewed as a refinement of
Lemma~\ref{<infty=>LAC}, will imply that they are in fact compact; as
the intensity bound is not needed here, we put $\beta=\infty$.

%
\begin{lemma}\label{lemlocequicontinuous} The restricted level sets
$\sP_{\Theta,P,\infty,\gamma}$ are locally equi-continuous in the
following sense: for each $W\in\PP$ and $0\le\gamma<\infty$ and
every sequence $B_k\in\cB_{W}$ with $B_k\downarrow\varnothing$ as
$k\to\infty$, one has
\[
\lim_{k\to\infty}\sup_{\bP\in\sP_{\Theta,P,\infty,\gamma}} \bP(B_k)=0.
\]
\end{lemma}

\begin{pf}
Let $W\in\PP$ and a sequence $B_k\in\cB_W$ with $B_k\downarrow
\varnothing$ be given. Pick some $\varepsilon>0$ and consider the
events $A_n$
defined in (\ref{eqAn}). Recall that $A_n\uparrow\BB\TT$. Since
$A_n\in\cB^\partial_{[n],0}$,
$\bP(A_n)$ depends only on the initial distribution of $\bP$, which
is $P$ for all $\bP\in\sP_{\Theta,P,\infty,\gamma}$.
So, there is an $n$ with $\bP(A_n)\ge1-\varepsilon$ for all $\bP
\in\sP
_{\Theta,P,\infty,\gamma}$.

Next, each $\bP\in\sP_{\Theta,P,\infty,\gamma}$ admits some
division kernel $\Phi$, and $\cH_{[n]}^\inn(\bP)\le\eta:=
n^d\gamma$ by Lemma~\ref{lementrlb}.
Since
\[
\cH_{[n]}^\inn(\bP)=\int\bP(\dint\bT) \cH \bigl(\bG
_{[n]}^{\Phi_{[n]}} \bigl( \cdot|\bT_{[n]}^{0,\partial}
\bigr);\bG _{[n]}^{\Lambda} \bigl( \cdot|\bT_{[n]}^{0,\partial}
\bigr) \bigr)
\]
by definition and Remark~\ref{remlocalisedGibbs}, we can conclude
that the set
\[
H_n:= \bigl\{\bT\in\BB\TT\dvtx \cH \bigl(\bG_{[n]}^{\Phi_{[n]}}
\bigl( \cdot|\bT_{[n]}^{0,\partial} \bigr);\bG_{[n]}^{\Lambda}
\bigl( \cdot |\bT _{[n]}^{0,\partial} \bigr) \bigr) \le\eta/
\varepsilon \bigr\}
\]
in $\cB_{[n]}^{0,\partial}$ has measure at least $1-\varepsilon$
for $\bP$.
It follows that
\[
\bP(B_k) \le2\varepsilon+\bP(B_k\cap A_n
\cap H_n) = 2\varepsilon+\int_{A_n\cap H_n}\bP(\dint\bT)
\bG_{[n]}^{\Phi
_{[n]}} \bigl(B_k|\bT_{[n]}^{0,\partial}
\bigr)
\]
because $\inn_{[n]}(s,\bT_{[n]}^\partial)\supset W$ for all $\bT\in
A_n$ and all $s$; recall (\ref{eqinnW}).
The next step is to use the inequality (\ref{eqvarform1}). For $\bT
\in A_n\cap H_n$,
this inequality shows that
\[
\bigl(\eta/{\varepsilon^2} \bigr) \bG_{[n]}^{\Phi_{[n]}}
\bigl(B_k|\bT _{[n]}^\partial \bigr)\le (\eta/{
\varepsilon}) + \log\int\dint\bPi^\Lambda_W(T_{W,0},
\cdot) \exp \bigl[ \bigl(\eta/{\varepsilon ^2} \bigr)
\1_{B_k} \bigr]
\]
since $\bG_{[n]}^{\Lambda}( \cdot|\bT_{[n]}^\partial)=\bPi
^\Lambda
_W(T_{W,0}, \cdot)$ on $\cB_W$ when $\bT\in A_n$.
Inserting this into the previous inequality, we find
%
\begin{eqnarray*}
&&\sup_{\bP\in\sP_{\Theta,P,\infty,\gamma}} \bP(B_k)
\\
&&\qquad \le3\varepsilon+ \bigl({\varepsilon^2}/\eta \bigr) \int
P_W(\dint T_W) \log\int\dint\bPi^\Lambda_W(T_W,
\cdot ) \exp \bigl[ \bigl(\eta/{\varepsilon^2} \bigr)
\1_{B_k} \bigr].
\end{eqnarray*}
Letting $k\to\infty$, using the dominated convergence theorem, and noting
that $\varepsilon$ was chosen arbitrarily, we
arrive at the lemma.
\end{pf}

The preceding lemma verifies the conditions of Propositions~4.9~and~4.15 of~\cite{GEO}, which imply that
$\sP_{\Theta,P,\infty,\gamma}$ is relatively compact and relatively
sequentially compact within the class of all translation invariant
BRTs. However, this does not yet imply that each limit of a net in $\sP
_{\Theta,P,\infty,\gamma}$ also satisfies the first-moment condition.
(This is because $h^\inn$ is the limit of \emph{conditional}
entropies which do not allow to control the number of cells that hit
the boundary. But this number enters into the hitting intensity $i_1$.)
The simplest way to deal with this problem is to add the bound $i_1\le
\beta$ which trivially implies (\ref{eqiP}) also for all limiting
BRTs. The proof of Theorem~\ref{thmlevelsets} is therefore complete.

\subsection{Free energy density, variational principle, existence}

Throughout this section, we fix a moderate division kernel $\Psi$. Our
first item is the existence of the energy density.

\begin{pf*}{Proof of Theorem~\ref{thmenergydensity}}
Let $\bP\in\sP_\Theta$ be a BRT with a covariant division kernel
$\Phi$.
By property (c) of Theorem~\ref{thmequivalence}, the inner energy of
$\bP$ in a window $W\in\PP$ can be written in the form
%
\begin{eqnarray}
\label{eqUW^in} \qquad&&\cU^\inn_W(\bP;\Psi)
\nonumber\\[-8pt]\\[-8pt]
&&\qquad= \int\dint\obP(s, \bT_s)\sum_{c\in T_s\dvtx  c\subset W}
\int\Phi (s,\bT_s,c,\dint H) \log\psi(s,\bT_s,c,H).\nonumber
\end{eqnarray}
Since $\Phi$ and $\psi$ are covariant, the time-integrated version of
the Palm formula~(\ref{Palm}) shows that the last term can be written
in the form
\[
\int\dint\obP^0(s, \bT_s,c) \Vol (x\dvtx c+x\subset W )
\int \Phi(s,\bT_s,c,\dint H) \log\psi(s,\bT_s,c,H).
\]
Hence,
\[
n^{-d} \cU^\inn_{[n]}(\bP;\Psi) =
u^\inn(\bP;\Psi) + \delta _n(\bP;\Psi)
\]
with
\[
\bigl|\delta_n(\bP;\Psi)\bigr|\le\kappa_\Psi\int\dint
\obP^0(s, \bT _s,c) \Vol \bigl(x\in[1]\dvtx c/n+x\not
\subset[1] \bigr) \Phi \bigl(s,\bT _s,c,\langle c\rangle \bigr)
\]
by (M3). The volume term above is bounded by $1$ and tends to $0$ as
$n\to\infty$. To apply the dominated convergence theorem, we thus
need to
show that the total mass of $\obP^0\otimes\Phi$ is finite. But the
Palm formula and
(\ref{eqkernelmass}) show that this mass is at most $i_1(\bP)$.
This completes the proof of the first part of Theorem~\ref
{thmenergydensity} and implies the bound on $|u^\inn(\bP;\Psi)|$.

The proof of the second part is similar: the Palm formula gives
%
\begin{eqnarray*}
&&\cV^\inn_W(\bP;\Psi)
\\
&&\qquad= \int\dint\obP^0(s, \bT_s,c) \Vol (x\dvtx c+x
\subset W ) \int_{\langle c\rangle}\Lambda(\dint H) \bigl(\psi(s,\bT
_{s},c,H)-1 \bigr)
\end{eqnarray*}
and thus $n^{-d} \cV^\inn_{[n]}(\bP;\Psi) = v^\inn(\bP;\Psi) +
\delta_n'(\bP;\Psi)$ with a remainder term $\delta_n'$ which, by
assumption (M4), is bounded in modulus by $\kappa_\Psi'$ times
\[
\int\dint\obP^0(s, \bT_s,c) \Vol \bigl(x\in[1]\dvtx
c/n+x\not \subset [1] \bigr).
\]
By (\ref{eqextPalmmass}) and the dominated convergence theorem,
this bound vanishes in the limit $n\to\infty$. The proof of Theorem
\ref
{thmenergydensity} is therefore complete.
\end{pf*}

%
\begin{remark}\label{remenergydensity}
Exploiting Theorem~\ref{thmequivalence}(c) and the Palm formula
(\ref{Palm}) in the same way as
in the first part of the preceding proof, one finds that the energy
density can be written in the alternative form
\[
u^\inn(\bP;\Psi)=\int\bP(\dint\bT)\sum_{(s,c,H)\in\sD(\bT
)\dvtx  m(c)\in[1]}
\log\psi(s,\bT_{s-},c,H),
\]
in which the division kernel of $\bP$ does not appear. In particular,
it follows that $u^\inn( \cdot;\Psi)$ is affine.
\end{remark}

Turning to the proof of the variational principle, Theorem~\ref
{thmvarprinciple}, we introduce an \emph{inner relative entropy of a
BRT $\bP$ in a window $W\in\PP$ with horizon $\br=\br_\Psi$
relative to $\Psi$} as follows: if $\bP$ admits a division kernel
$\Phi$ we set, using the notation of Remark~\ref{remlocalisedGibbs},
%
\begin{eqnarray}
\label{eqinnrelentr} \qquad &&\cH_W^{\br,\inn}(\bP;\Psi)
\nonumber\\[-8pt]\\[-8pt]
&&\qquad = \int\bP(\dint\bT) \cH \bigl(\bG^{\Phi_{W+B_\br}}_W \bigl(
\cdot| \bT^{W+B_\br,0,\out}_W \bigr); \bG^{\Psi_{W+B_\br}}_W
\bigl( \cdot|\bT^{W+B_\br,0,\out}_W \bigr) \bigr);\nonumber
\end{eqnarray}
otherwise we set $\cH_W^{\br,\inn}(\bP;\Psi)=\infty$.
(Compare this definition with Remark~\ref{rementropyrange}, where
$\Psi=\Lambda^{*}$.)
By the bounded-range property (M2) of $\Psi$ and Corollary~\ref
{corPhiWfromPhi}, the conditional BRT $\bG^{\Psi_{W+B_\br}}_W$
in (\ref{eqinnrelentr})
actually coincides with $\bG^{\Psi}_W$. We then have the following
convergence to the quantity $h^\inn(\bP;\Psi)$
in (\ref{eqfreeenergy}).

%
\begin{corollary}\label{correlentrhorizon}
Let $\Psi$ be a moderate division kernel and $\br=\br_\Psi$ its
range. Then
\[
h^\inn(\bP;\Psi)=\lim_{n\to\infty}n^{-d}
\cH_{[n]}^{\br,\inn
}(\bP;\Psi)
\]
for all $\bP\in\sP_\Theta$. The limit is finite if and only if
$h^\inn
(\bP)<\infty$, and then equation~(\ref{eqrelentrdensityPsi}) holds.
\end{corollary}

\begin{pf}
An analog of equation (\ref{eqlocaldensity}) gives for each $n$ the identity
%
\begin{equation}
\label{eqHn^r,in} \cH_{[n]}^{\br,\inn}(\bP;\Psi)=
\cH_{[n]}^{\br,\inn}(\bP)-\cU ^\inn_{[n]}(\bP;
\Psi)+\cV^\inn_{[n]}(\bP;\Psi),
\end{equation}
which is a counterpart to (\ref{eqfreeenergy}). Also, the estimates
in the proof of Theorem~\ref{thmenergydensity} show that the second
and third term on the right-hand side are bounded in modulus by a
finite constant times $n^d$.
The convergence result thus follows directly from Remark~\ref
{rementropyrange} and Theorem~\ref{thmenergydensity}
(together with Lemma~\ref{<infty=>LAC} and Theorem~\ref{thmexPhi}).

Next, suppose that $h^\inn(\bP)<\infty$ and let $\varphi$ and
$\psi$
be the Radon--Nikodym densities of $\Phi$ and $\Psi$ with respect to
$\Lambda^{*}$. Inserting
the explicit expressions for all quantities, we then obtain
%
\begin{eqnarray*}
&& h^\inn(\bP)-u^\inn(\bP;\Psi)+v^\inn(\bP;\Psi)
\\
&&\qquad = \int\dint \obP^0 \otimes\Lambda^{*} [-\varphi+
\varphi \log\varphi-\varphi\log\psi+ \psi ]
\\
&&\qquad =\int\dint\obP^0 \otimes\Lambda^{*} \psi\varrho(
\varphi /\psi) =\int\dint\obP^0\, \cH(\Phi;\Psi),
\end{eqnarray*}
which is (\ref{eqrelentrdensityPsi}).
\end{pf}

The variational principle, Theorem~\ref{thmvarprinciple}, follows
directly from equation (\ref{eqrelentrdensityPsi}) and thus from
the preceding corollary.

Next we address the existence problem for BRTs with given division
kernel, as stated in Theorem~\ref{thmexistence}.
We still keep a moderate $\Psi$ fixed and let $\br=\br_\Psi$ be its
range. We also fix an initial distribution $P\in\sP_\Theta(\TT)$.
We will construct a BRT $\bP$ with initial distribution $\bpr_0(\bP
)=P$ and division kernel $\Psi$ as a cluster point of some
approximating measures $\bP^{n,\av} $.

Specifically, for any $n$ we let $\bar n=n+\br$ and consider the
shifted cubes $[n]_i=[n]+\bar n i$, $i\in\ZZ^d$, which are separated
by a grid of corridors of width $\br$. Let $[n]_\bullet=\bigcup_{i\in\ZZ^d}[n]_i$ be their union.
We introduce a BRT $\bP^{n}$ for which the cells that hit the
corridors between the boxes $[n]_i$
evolve according to $\Lambda^{*}$ and, conditioned on this STIT
evolution, the
cells inside these boxes evolve independently according to $\Psi$.
(This is inspired by the familiar construction of independent
repetitions in disjoint blocks,
which is often used in large deviation theory; see \cite{GEO}, (15.52), for example. Using the STIT process in the corridors
between the blocks, we avoid an artificial cutting of cells at the
block boundaries.)
Formally, we introduce the projection
\[
\bpr_{[n]_\bullet}^{0,\out}\dvtx \bT\mapsto\bT_{[n]_\bullet}^{0,\out}:=
\biggl(\bigcup_{i\in\ZZ^d}T_{[n]_i,0}^\inn,
\bigcap_{i\in\ZZ^d}\bT_{[n]_i}^\out
\biggr),
\]
and define
%
\begin{equation}
\label{eqP^n} \bP^{n} = \bigl(P\bPi^\Lambda
\bigr)_{[n]_\bullet}^{0,\out}\otimes \bigotimes
_{i\in\ZZ^d} \bG^\Psi_{[n]_i}.
\end{equation}
More explicitly, $\bP^{n}$ is defined by its integrals
\[
\int f \dint\bP^{n} =\int P\bPi^\Lambda(\dint\bT) \prod
_{i\in
\ZZ
^d}\int\bG^\Psi_{[n]_i} \bigl(
\dint\bS_i|\bT_{[n]_i}^{0,\out} \bigr) f \biggl(
\bT_{[n]_\bullet}^{0,\out}\cup\bigcup_i
\bS _i \biggr)
\]
for measurable functions $f\ge0$ on $\BB\TT$. By the bounded-range
property (M2), the conditional BRTs
$\bG^\Psi_{[n]_i}(\cdot|\bT_{[n]_i}^{0,\out})$
 depend only on $\bT_{[n]_\bullet}^{0,\out}$, so
that $\bP^{n}$ is\vspace*{2pt} well defined.
It is easily seen that $\bP^{n}$ is a BRT with initial distribution
$P$ and division kernel
%
\begin{equation}
\label{eqPsi^n} \Psi^n(s,\bT,c, \cdot)=\cases{ \Psi(s,\bT,c, \cdot),
&\quad if $c\subset[n]_i$ for some $i\in \ZZ^d$,
\vspace*{3pt}\cr
\Lambda \bigl(\langle c\rangle\cap\cdot \bigr), &\quad otherwise.}
\end{equation}
To achieve translation invariance, we introduce the average
%
\begin{equation}
\label{eqP^n,av} \bP^{n,\av} = \bar n ^{-d}\int
_{[\bar n ]}\dint x\, \bP^{n}\circ \vartheta_x^{-1}.
\end{equation}
The next two lemmas show that the BRTs $\bP^{n,\av}$ belong to a
restricted level set of the inner entropy density,
and thus have a cluster point.

%
\begin{lemma}\label{lemuniformmomentbound}
\textup{(a)}~There exists a constant $\beta<\infty$ such that\newline\mbox
{$i_1(\bP^{n,\av})\le\beta$} for all $n$.

\textup{(b)}~For every $\varepsilon>0$, there exists some $\tau<\infty$
such that
\[
\int\bP^{n,\av}(\dint\bT) |T_{[1],1}| \1_{\{|T_{[1],1}|\ge
\tau\}
}\le
\varepsilon\qquad\mbox{for all }n.
\]
\end{lemma}

\begin{pf}Let $\kappa=\kappa_\Psi$. Since $\psi\le e^\kappa$ by
(M3), it
follows that each kernel $\Psi^n$ also has a $\Lambda$-density $\psi^n$
satisfying $\psi^n\le e^\kappa$ for all $n$. With the help of Remark
\ref
{remphiconsistency}, we can further conclude that
this bound remains true after localisation to a window $W\in\PP$
(relative to $\bP^n$); that is,
the localised kernel $\Psi^n_W$ has a $\Lambda$-density $\psi^n_W$ with
$\psi^n_W\le e^\kappa$. In particular, if $W=[1]+x$ is a translate
of the
unit cube, then
%
\begin{equation}
\label{eqalpha} \Psi^n_W \bigl(s,\bT_{W,s},c,
\langle c\rangle \bigr)\le e^\kappa\Lambda \bigl( \bigl\langle[1] \bigr
\rangle \bigr)=:\alpha <\infty
\end{equation}
for all possible arguments.
In view of Lemma~\ref{lemmoment-bound}, it follows that
%
\begin{equation}
\label{equniformmomentbound} \int\bP^n(\dint\bT) |T_{[1]+x,1}| \le
\beta:=i_0(P) e^\alpha,
\end{equation}
and statement (a) follows by averaging over $x$.

To prove (b), we still let $W=[1]+x$ and define $\varepsilon_1
=\varepsilon/4i_0(P)$.
By Lemma~\ref{lemmoment-bound}, there exists a number $\tau_1$ with
\[
\sup_n\int\bP^n(\dint\bT) \bigl(|T_{W,1}|-
\tau_1|T_{W,0}| \bigr)_+ \le\varepsilon_1
i_0(P) =\varepsilon/4.
\]
For any $\tau_2$ we then find (by distinguishing whether or not $\tau
_1|T_{W,0}|\le\tau_2$) that
\[
\int\bP^n(\dint\bT) \bigl(|T_{W,1}|-\tau_2 \bigr)_+\le
\varepsilon/4+ e^\alpha \int P(\dint T) |T_{[1]}|
\1_{\{|T_{[1]}|>\tau_2/\tau_1\}},
\]
which is at most $\varepsilon/2$ for suitable choice of $\tau_2$.
Setting $\tau
=2\tau_2$ and using that
\[
|T_{W,1}|\le2 \bigl(|T_{W,1}|-\tau_2 \bigr)_+\qquad
\mbox{on }\bigl\{ |T_{W,1}|\ge\tau\bigr\},
\]
we then see that
\[
\int\bP^{n}(\dint\bT) |T_{[1]+x,1}| \1_{\{|T_{[1]+x,1}|\ge\tau
\}
}\le
\varepsilon
\]
for all $n$. Statement (b) thus follows by taking the average over
$x\in[\bar n]$.
\end{pf}

%
\begin{lemma}\label{lemPsi-entropy->0}
$h^\inn(\bP^{n,\av};\Psi)\to0$ as $n\to\infty$.
\end{lemma}

\begin{pf}Fix any $n$ and let $\bP^n$ and $\Psi^n$ be given by (\ref
{eqP^n}) and (\ref{eqPsi^n}). Consider the inner relative entropy of
$\bP^{n,\av}$ relative to $\Psi$ in a large cube $W=[k]$ and with
horizon $\br=\br_\Psi$, as defined in
(\ref{eqinnrelentr}). In concise notation, (\ref{eqinnrelentr}) reads
\[
\cH_W^{\br,\inn}(\bP;\Psi) =\cH \bigl(\bP_{W+B_\br}; \bP
^{W+B_\br,0,\out}_W\otimes\bG^{\Psi}_W \bigr).
\]
As relative entropy is jointly measure convex, we have
\[
\cH_{W}^{\br,\inn} \bigl(\bP^{n,\av};\Psi \bigr) \le\bar n
^{-d}\int_{[\bar
n ]}\dint x\, \cH_{W+x}^{\br,\inn}
\bigl(\bP^{n};\Psi \bigr).
\]
To estimate this further, we note that $\bP^n$ has the division kernel
$\Psi^n$.
A combination of (\ref{eqHn^r,in}), (\ref{eqUW^in}), (\ref
{eqpressureW}) and an analog of (\ref{eqinnentrW})
thus gives the formula
\[
\cH_{W+x}^{\br,\inn} \bigl(\bP^{n};\Psi \bigr)= \int
\dint \obP^n(s,\bT_s) \sum_{c\in T_s\dvtx  c\subset W+x}
\cH \bigl(\Psi^n; \Psi|s,\bT_{s},c \bigr),
\]
where $\cH(\Psi^n; \Psi|s,\bT_{s},c)=\cH (\Psi^n(s,\bT
_{s},c, \cdot); \Psi(s,\bT_{s},c, \cdot) )$ for brevity.
We can further use that $W+x\subset[k+\bar n]$ when $x\in[\bar n]$.
Altogether, we obtain
\[
\cH_{W}^{\br,\inn} \bigl(\bP^{n,\av};\Psi \bigr) \le\int
\dint\obP^n(s,\bT _s) \sum_{c\in T_s\dvtx  c\subset[k+\bar n]}
\cH \bigl(\Psi^n; \Psi|s,\bT_{s},c \bigr).
\]

Next, it is clear from (\ref{eqPsi^n}) that $\cH(\Psi^n; \Psi|
\cdot, \cdot,c)=0$ when $c\subset[n]_i$ for some~$i$.
On the other hand, for any cell $c$ hitting the corridors between the
boxes $[n]_i$ we have $\cH(\Psi^n; \Psi| \cdot, \cdot,c)=\cH
(\Lambda^{*}; \Psi| \cdot, \cdot,c)$, which is bounded by a constant.
Indeed, the function $\varrho(a)$ defined in (\ref{rhofunctiondef}) is
bounded by a multiple of $|a-1|$ as long as $a\le e^{\kappa_\Psi}$.
Assumptions (M3) and (M4) therefore imply that
\[
\cH \bigl(\Lambda^{*}; \Psi| \cdot, \cdot,c \bigr) =\int
_{\langle c\rangle}\Lambda(\dint H) \psi( \cdot, \cdot,c,H) \varrho
\bigl(1/\psi( \cdot, \cdot,c,H) \bigr) \le\tilde \kappa_\Psi
\]
for some constant $\tilde\kappa_\Psi<\infty$ and all $c$ hitting
the corridors.

Now let $k=(\ell-1)\bar n$ for some integer $\ell$ and $L_\ell$ be
such that $\{[1]+x\dvtx  x\in L_\ell\}$ is a tessellation
of $[\ell\bar n]\setminus[n]_\bullet$. The preceding estimates then
show that
%
\begin{eqnarray*}
\cH_{[k]}^{\br,\inn} \bigl(\bP^{n,\av};\Psi \bigr) &\le&
\sum_{x\in L_\ell} \int\dint\obP^n(s,
\bT_s) \sum_{c\in T_s\dvtx  c\cap([1]+x)\ne
\varnothing} \cH \bigl(
\Psi^n; \Psi|s,\bT_{s},c \bigr)
\\
&\le&\tilde\kappa_\Psi\sum_{x\in L_\ell}\int\dint
\obP^n(s,\bT _s) |T_{[1]+x,1}| \le\tilde
\kappa_\Psi\beta |L_\ell|.
\end{eqnarray*}
The last inequality comes from (\ref{equniformmomentbound}).
Letting $\ell\to\infty$ and applying Corollary~\ref{correlentrhorizon},
we finally see that
\[
h^{\inn} \bigl(\bP^{n,\av};\Psi \bigr) \le\tilde
\kappa_\Psi\beta\lim_{\ell\to\infty
}\frac{(\ell\bar n)^d-\ell^d n^d}{((\ell-1)\bar n)^d} = \tilde
\kappa_\Psi\beta \bigl(1-({n}/{\bar n})^d \bigr).
\]
This proves the lemma.
\end{pf}

Combining equation (\ref{eqfreeenergy}) with the last lemma and the
bounds in Theorem~\ref{thmenergydensity} and Lemma~\ref
{lemuniformmomentbound}(a), one finds that
\[
h^\inn \bigl(\bP^{n,\av} \bigr)\le \bigl(\kappa_\Psi+ \kappa_\Psi' \bigr)\beta +1=:\gamma <\infty
\]
when $n$ is large enough. That is, the measures $\bP^{n,\av}$
eventually belong to the sequentially compact level set
$\sP_{\Theta, P,\beta,\gamma}$ of Theorem~\ref{thmlevelsets}. This
means that a subsequence converges in $\tau_\loc$ to some $\bP$ in
this set. We need to show that $\bP$ has the \mbox{division} kernel $\Psi$.
In view of Theorem~\ref{thmvarprinciple}, this will follow
once we have shown that $h^\inn(\bP;\Psi)=0$. By the last lemma, it
is therefore sufficient to verify that $h^\inn( \cdot;\Psi)$
is lower semi-continuous on the closure of the sequence $\{\bP^{n,\av
}\dvtx  n\ge1\}$.
In view of equation (\ref{eqfreeenergy}) and Theorem~\ref{thmlevelsets}, this follows from the next lemma, which completes the
proof of Theorem~\ref{thmexistence}.

%
\begin{lemma}\label{lemenergycont}
The functionals $u^\inn( \cdot;\Psi)$ and $v^\inn( \cdot;\Psi)$ are continuous on the closure $\sC$ of the sequence
$\{\bP^{n,\av}\dvtx  n\ge1\}$.
\end{lemma}

\begin{pf}
First, we observe that the estimate in Lemma~\ref
{lemuniformmomentbound}(b) holds not only for all $\bP^{n,\av}$,
but even for all $\bP\in\sC$. This is because the integral there is
a lower semi-continuous function of the integrating measure.
We further know from Lemma~\ref{<infty=>LAC} that each $\bP\in\sC$
satisfies (\ref{LAC}).
Hence, Theorem~\ref{thmenergydensity} and Remark~\ref
{remenergydensity} can be applied.

It follows that $u^\inn(\bP;\Psi)= \int u \,\dint\bP$ for the function
\[
u(\bT) = \sum_{(s,c,H)\in\sD(\bT)\dvtx  m(c)\in[1]} \log\psi(s,\bT
_{s-},c,H)
\]
on $\BB\TT$, which in general is neither bounded nor local. We will
therefore replace $u$ by a truncated version
\[
u_{\tau,\ell}(\bT) = \1_{\{|\bT_{[1],1}| \le\tau\}} \sum_{(s,c,H)\in
\sD(\bT)\dvtx  m(c)\in[1], r(c)\le\ell}
\log\psi(s,\bT_{s-},c,H),
\]
for suitable numbers $\tau$ and $\ell$; $r(c)$ is again the radius of
$c$. The function $u_{\tau,\ell}$ is bounded in modulus by $\kappa
_\Psi\tau
$ and also local because of (M2). It differs from $u$ by at most
$\kappa
_\Psi(\delta_\tau+\delta_\ell)$ with the error functions
\[
\delta_\tau(\bT)= \1_{\{|\bT_{[1],1}| > \tau\}} |\bT_{[1],1}|,\qquad
\delta_\ell(\bT)= \sum_{(s,c,H)\in\sD(\bT)\dvtx  m(c)\in
[1]}
\1_{\{r(c)>\ell\}}.
\]
As noticed at the beginning of this proof, we have $\sup_{\bP\in\sC
}\int\delta_\tau\, \dint\bP\to0$ as $\tau\to\infty$. On the
other hand,
the function $\delta_\ell$ is not larger than
\[
\delta_\ell'(\bT) = \sum_{c_0\in T_0\dvtx  c_0\cap[1]\ne\varnothing,
r(c_0)>\ell}
|T_{c_0\cap[1],1}|,
\]
and Lemma~\ref{lemmoment-bound} gives the estimate
\[
\sup_{\bP\in\sC}\int\delta_\ell' \,\dint\bP
\le e^\alpha\int P(\dint T) \sum_{c_0\in T_0\dvtx  c_0\cap[1]\ne\varnothing}
\1_{\{
r(c_0)>\ell\}}
\]
for the constant $\alpha$ in (\ref{eqalpha}) because each $\bP\in
\sC$
has initial distribution $P$.
This bound does not depend on $n$ and tends to $0$ as $\ell\to\infty$
because $i_0(P)<\infty$. We have thus shown
that the restriction of $u^\inn( \cdot;\Psi)$ to $\sC$ is the
uniform limit of the functions $\bP\mapsto\int u_{\tau,\ell}
\,\dint
\bP$,
which are continuous in $\tau_\loc$.

The analogous result for $v^\inn(\bP^{n,\av};\Psi)$ is achieved in
a similar way by truncating the function
\[
v(\bT)= \int_0^1\dint s \sum
_{c\in T_s\dvtx  m(c)\in[1]}\int_{\langle
c\rangle}\Lambda(\dint H) \bigl(
\psi(s,\bT_{s},c,H)-1 \bigr)
\]
and using (M4).
\end{pf}

As the proof of Theorem~\ref{thmexistence} is now complete, we turn
to its corollary.

\begin{pf*}{Proof of Corollary~\ref{corface}}
Suppose $\bP\in\sG_\Theta(\Psi)$
is not extremal in $\sP_\Theta$. Then $\bP= a \bP^1+(1-a)\bP^2$ for
some $0<a<1$ and two distinct BRTs $ \bP^1, \bP^2\in\sP_\Theta$. By
Theorem~\ref{thmlevelsets}, Remark~\ref{remenergydensity} and
Theorem~\ref{thmvarprinciple}, it follows that
\[
0=h^\inn(\bP;\Psi) = a h^\inn \bigl(\bP^1;\Psi
\bigr)+(1-a) h^\inn \bigl(\bP ^2;\Psi \bigr),
\]
so that $ \bP^1, \bP^2$ both belong to $\sG_\Theta(\Psi)$. Hence,
$\bP$ is not extremal in $\sG_\Theta(\Psi)$.
\end{pf*}

Our final observation concerns the uniqueness problem discussed in
Remark~\ref{remuniqueness}. We will exploit the fact that, in one
space dimension,
we always have that $\sum_{c\in T_W}\Lambda(\langle c\rangle
)=\Lambda(\langle W\rangle)$
when $W\in\PP$ and $T_W\in\TT_W$.
Consider the following variants of conditions (M3) and (M4):

\begin{longlist}[(M4$'$)]
\item[(M3$'$)] $\Psi$ is \emph{STIT-bounded}, in that $\Psi\le K_\Psi
\Lambda^{*}$ for some
constant $K_\Psi<\infty$.
\item[(M4$'$)]
$\Psi$ is \emph{STIT for large cells}, in that $\Psi( \cdot,
\cdot,c, \cdot)=\Lambda(\langle c\rangle\cap \cdot
)$ whenever
$\diam(c)\ge r_\Psi'$ for some constant $r_\Psi'<\infty$.
\end{longlist}

%
\begin{proposition}\label{prop1duniqueness}
Suppose that the space dimension is $d=1$. Let $P\in\sP(\TT)$ and
$\Psi$ be a division kernel satisfying
\textup{(M2)}, \textup{(M3$'$)} and \textup{(M4$'$)}.
Then there exists at most one BRT for $\Psi$ with initial distribution $P$.
\end{proposition}

\begin{pf}
Suppose there exist two distinct BRTs $\bP$, $\bP'$ for $\Psi$ with
the same initial distribution~$P$. Consider the difference measure $\bP
^\delta=\bP-\bP'$ and fix an interval $[k]\in\PP$. Let $g$ be
$\cB_{[k]}$-measurable with $|g|\le1$.
Using property (a) of Theorem~\ref{thmequivalence}, we obtain for
each $0<t\le1$ the identity
\[
\int g\,\dint\bP^\delta_t=\int_0^t
\dint s\int\bP^\delta _s(\dint\bT_s)
\LL_s^\Psi g(\bT_s)
\]
with
\[
\LL_s^\Psi g(\bT_s)=\sum
_{c\in T_s\dvtx  c\cap[k]\ne\varnothing}\int_{\langle c\cap[k]\rangle} \Psi(s,
\bT_s,c,\dint H) \bigl[g \bigl(\oslash_{s,c,H}(
\bT_s) \bigr)-g(\bT_s) \bigr].
\]
Now, (M2) and (M4$'$) imply that $\LL_s^\Psi g(\bT_s)$ depends only on
$\bT_{[k+r],s}$ with $r=2(r_\Psi+r_\Psi')$.
On the other hand, using (M3$'$) and the additivity of $c\mapsto\Lambda
(\langle
c\rangle)$ we find that
\[
\bigl|\LL_s^\Psi g(\bT_s)\bigr|\le2 K_\Psi\sum
_{c\ni T_s\dvtx c\cap[k]\neq
\varnothing}\Lambda \bigl( \bigl\langle c\cap[k] \bigr
\rangle \bigr)=2 K_\Psi\Lambda \bigl( \bigl\langle[k] \bigr\rangle
\bigr)=:\alpha k.
\]
The total variation norm $\delta_{k}(t):=\|\bP^\delta_{[k],t}\|$
thus satisfies the inequality
%
\begin{equation}
\label{eqGronwall} \delta_k(t)\le\alpha k\int_0^t
\delta_{k+r}(s)\,\dint s
\end{equation}
of Gronwall type. (Note that $\delta_k$ is increasing and, therefore,
measurable.)
Since $\delta_{k+nr}(s)\le2$, we obtain by $n$-fold iteration
\[
\delta_k(t)\le2 \alpha^n (k+nr)^n
t^n/n! \le2 e^k \bigl(\alpha te^r
\bigr)^n
\]
and thus, in the limit as $n\to\infty$, $\delta_k(t)=0$ for all $t<
\varepsilon:=1/(\alpha e^r)$ and all $k$. Inserting this into (\ref{eqGronwall})
and repeating the estimate, we obtain that $\delta_k(t)=0$ for all
$t< 2\varepsilon$ and all $k$. Continuing in this way, we finally find
that $\delta_k(1)=0$ for all $k$, which means that $\bP=\bP'$.
\end{pf}

\section*{Acknowledgements}
We would like to thank Claudia Redenbach for providing the simulation
pictures of Figure~\ref{Bild}, and an anonymous referee for helpful
hints and suggestions.




\printaddresses


\begin{thebibliography}{31}
\bibitem{ArakSurgailis89}
\begin{barticle}[mr]
\bauthor{\bsnm{Arak},~\bfnm{T.}\binits{T.}} \AND
\bauthor{\bsnm{Surgailis},~\bfnm{D.}\binits{D.}}
(\byear{1989}).
\btitle{Markov fields with polygonal realizations}.
\bjournal{Probab. Theory Related Fields}
\bvolume{80}
\bpages{543--579}.
\bid{doi={10.1007/BF00318906}, issn={0178-8051}, mr={0980687}}
\end{barticle}
\bptok{imsref}%
\endbibitem

\bibitem{ArakSurgailis91}
\begin{barticle}[mr]
\bauthor{\bsnm{Arak},~\bfnm{T.}\binits{T.}} \AND
\bauthor{\bsnm{Surgailis},~\bfnm{D.}\binits{D.}}
(\byear{1991}).
\btitle{Consistent polygonal fields}.
\bjournal{Probab. Theory Related Fields}
\bvolume{89}
\bpages{319--346}.
\bid{doi={10.1007/BF01198790}, issn={0178-8051}, mr={1113222}}
\end{barticle}
\bptok{imsref}%
\endbibitem

\bibitem{BertinBilliotDrouilhet}
\begin{barticle}[mr]
\bauthor{\bsnm{Bertin},~\bfnm{Etienne}\binits{E.}},
\bauthor{\bsnm{Billiot},~\bfnm{Jean-Michel}\binits{J.-M.}} \AND
\bauthor{\bsnm{Drouilhet},~\bfnm{R{\'e}my}\binits{R.}}
(\byear{1999}).
\btitle{Existence of {D}elaunay pairwise {G}ibbs point process with superstable component}.
\bjournal{J. Stat. Phys.}
\bvolume{95}
\bpages{719--744}.
\bid{doi={10.1023/A:1004551527790}, issn={0022-4715}, mr={1700922}}
\end{barticle}
\bptok{imsref}%
\endbibitem

\bibitem{Bertoin}
\begin{bbook}[mr]
\bauthor{\bsnm{Bertoin},~\bfnm{Jean}\binits{J.}}
(\byear{2006}).
\btitle{Random Fragmentation and Coagulation Processes}.
\bpublisher{Cambridge Univ. Press},
\blocation{Cambridge}.
\bid{doi={10.1017/CBO9780511617768}, mr={2253162}}
\end{bbook}
\bptok{imsref}%
\endbibitem

\bibitem{CattiauxRZ}
\begin{barticle}[mr]
\bauthor{\bsnm{Cattiaux},~\bfnm{P.}\binits{P.}},
\bauthor{\bsnm{Roelly},~\bfnm{S.}\binits{S.}} \AND
\bauthor{\bsnm{Zessin},~\bfnm{H.}\binits{H.}}
(\byear{1996}).
\btitle{Une approche gibbsienne des diffusions browniennes infini-dimensionnelles}.
\bjournal{Probab. Theory Related Fields}
\bvolume{104}
\bpages{147--179}.
\bid{doi={10.1007/BF01247836}, issn={0178-8051}, mr={1373374}}
\end{barticle}
\bptok{imsref}%
\endbibitem

\bibitem{DaiPra}
\begin{barticle}[mr]
\bauthor{\bsnm{Dai Pra},~\bfnm{Paolo}\binits{P.}}
(\byear{1993}).
\btitle{Large deviations and stationary measures for interacting particle systems}.
\bjournal{Stochastic Process. Appl.}
\bvolume{48}
\bpages{9--30}.
\bid{doi={10.1016/0304-4149(93)90105-D}, issn={0304-4149}, mr={1237166}}
\end{barticle}
\bptok{imsref}%
\endbibitem

\bibitem{DaiPraRZ}
\begin{barticle}[mr]
\bauthor{\bsnm{Dai Pra},~\bfnm{Paolo}\binits{P.}},
\bauthor{\bsnm{Roelly},~\bfnm{Sylvie}\binits{S.}} \AND
\bauthor{\bsnm{Zessin},~\bfnm{Hans}\binits{H.}}
(\byear{2002}).
\btitle{A {G}ibbs variational principle in space-time for infinite-dimensional diffusions}.
\bjournal{Probab. Theory Related Fields}
\bvolume{122}
\bpages{289--315}.
\bid{doi={10.1007/s004400100170}, issn={0178-8051}, mr={1894070}}
\end{barticle}
\bptok{imsref}%
\endbibitem

\bibitem{DereudreDrouilhetGeorgii}
\begin{barticle}[mr]
\bauthor{\bsnm{Dereudre},~\bfnm{David}\binits{D.}},
\bauthor{\bsnm{Drouilhet},~\bfnm{Remy}\binits{R.}} \AND
\bauthor{\bsnm{Georgii},~\bfnm{Hans-Otto}\binits{H.-O.}}
(\byear{2012}).
\btitle{Existence of {G}ibbsian point processes with geometry-dependent interactions}.
\bjournal{Probab. Theory Related Fields}
\bvolume{153}
\bpages{643--670}.
\bid{doi={10.1007/s00440-011-0356-5}, issn={0178-8051}, mr={2948688}}
\end{barticle}
\bptok{imsref}%
\endbibitem

\bibitem{DereudreGeorgii}
\begin{barticle}[mr]
\bauthor{\bsnm{Dereudre},~\bfnm{David}\binits{D.}} \AND
\bauthor{\bsnm{Georgii},~\bfnm{Hans-Otto}\binits{H.-O.}}
(\byear{2009}).
\btitle{Variational characterisation of {G}ibbs measures with {D}elaunay triangle interaction}.
\bjournal{Electron. J. Probab.}
\bvolume{14}
\bpages{2438--2462}.
\bid{doi={10.1214/EJP.v14-713}, issn={1083-6489}, mr={2563247}}
\end{barticle}
\bptok{imsref}%
\endbibitem

\bibitem{Deuschel}
\begin{barticle}[mr]
\bauthor{\bsnm{Deuschel},~\bfnm{J.-D.}\binits{J.-D.}}
(\byear{1986}).
\btitle{Nonlinear smoothing of infinite-dimensional diffusion processes}.
\bjournal{Stochastics}
\bvolume{19}
\bpages{237--261}.
\bid{doi={10.1080/17442508608833427}, issn={0090-9491}, mr={0872463}}
\end{barticle}
\bptok{imsref}%
\endbibitem

\bibitem{Feller40}
\begin{barticle}[mr]
\bauthor{\bsnm{Feller},~\bfnm{Willy}\binits{W.}}
(\byear{1940}).
\btitle{On the integro-differential equations of purely discontinuous {M}arkoff processes}.
\bjournal{Trans. Amer. Math. Soc.}
\bvolume{48}
\bpages{488--515}.
\bid{issn={0002-9947}, mr={0002697}}
\end{barticle}
\bptok{imsref}%
\endbibitem

\bibitem{FoeSn}
\begin{barticle}[mr]
\bauthor{\bsnm{F{\"o}llmer},~\bfnm{H.}\binits{H.}} \AND
\bauthor{\bsnm{Snell},~\bfnm{J.~L.}\binits{J.~L.}}
(\byear{1977}).
\btitle{An ``inner'' variational principle for {M}arkov fields on a graph}.
\bjournal{Z. Wahrsch. Verw. Gebiete}
\bvolume{39}
\bpages{187--195}.
\bid{mr={0445642}}
\end{barticle}
\bptok{imsref}%
\endbibitem

\bibitem{GEO}
\begin{bbook}[mr]
\bauthor{\bsnm{Georgii},~\bfnm{Hans-Otto}\binits{H.-O.}}
(\byear{2011}).
\btitle{Gibbs Measures and Phase Transitions},
\bedition{2nd} ed.
\bpublisher{de Gruyter},
\blocation{Berlin}.
\bid{doi={10.1515/9783110250329}, mr={2807681}}
\end{bbook}
\bptok{imsref}%
\endbibitem

\bibitem{Israel}
\begin{bbook}[mr]
\bauthor{\bsnm{Israel},~\bfnm{Robert~B.}\binits{R.~B.}}
(\byear{1979}).
\btitle{Convexity in the Theory of Lattice Gases}.
\bpublisher{Princeton Univ. Press},
\blocation{Princeton, NJ}.
\bid{mr={0517873}}
\end{bbook}
\bptok{imsref}%
\endbibitem

\bibitem{KRM}
\begin{bbook}[mr]
\bauthor{\bsnm{Kallenberg},~\bfnm{Olav}\binits{O.}}
(\byear{1983}).
\btitle{Random Measures},
\bedition{3rd} ed.
\bpublisher{Akademie Verlag},
\blocation{Berlin}.
\bid{mr={0818219}}
\end{bbook}
\bptok{imsref}%
\endbibitem

\bibitem{KP}
\begin{bbook}[mr]
\bauthor{\bsnm{Kallenberg},~\bfnm{Olav}\binits{O.}}
(\byear{2002}).
\btitle{Foundations of Modern Probability},
\bedition{2nd} ed.
\bpublisher{Springer},
\blocation{ New York}.
\bid{mr={1876169}}
\end{bbook}
\bptok{imsref}%
\endbibitem

\bibitem{MNW}
\begin{barticle}[mr]
\bauthor{\bsnm{Mecke},~\bfnm{J.}\binits{J.}},
\bauthor{\bsnm{Nagel},~\bfnm{W.}\binits{W.}} \AND
\bauthor{\bsnm{Weiss},~\bfnm{V.}\binits{V.}}
(\byear{2008}).
\btitle{A global construction of homogeneous random planar tessellations that are stable under iteration}.
\bjournal{Stochastics}
\bvolume{80}
\bpages{51--67}.
\bid{doi={10.1080/17442500701605403}, issn={1744-2508}, mr={2384821}}
\end{barticle}
\bptok{imsref}%
\endbibitem

\bibitem{NW05}
\begin{barticle}[mr]
\bauthor{\bsnm{Nagel},~\bfnm{Werner}\binits{W.}} \AND
\bauthor{\bsnm{Weiss},~\bfnm{Viola}\binits{V.}}
(\byear{2005}).
\btitle{Crack {STIT} tessellations: Characterization of stationary random tessellations stable with respect to iteration}.
\bjournal{Adv. in Appl. Probab.}
\bvolume{37}
\bpages{859--883}.
\bid{doi={10.1239/aap/1134587744}, issn={0001-8678}, mr={2193987}}
\end{barticle}
\bptok{imsref}%
\endbibitem

\bibitem{OkaBooSugChi00}
\begin{bbook}[mr]
\bauthor{\bsnm{Okabe},~\bfnm{Atsuyuki}\binits{A.}},
\bauthor{\bsnm{Boots},~\bfnm{Barry}\binits{B.}},
\bauthor{\bsnm{Sugihara},~\bfnm{Kokichi}\binits{K.}} \AND
\bauthor{\bsnm{Chiu},~\bfnm{Sung~Nok}\binits{S.~N.}}
(\byear{2000}).
\btitle{Spatial Tessellations: Concepts and Applications of {V}oronoi Diagrams}.
\bpublisher{Wiley},
\blocation{Chichester}.
\bid{mr={1770006}}
\end{bbook}
\bptok{imsref}%
\endbibitem

\bibitem{RedenbachThaele}
\begin{barticle}[mr]
\bauthor{\bsnm{Redenbach},~\bfnm{Claudia}\binits{C.}} \AND
\bauthor{\bsnm{Th{\"a}le},~\bfnm{Christoph}\binits{C.}}
(\byear{2013}).
\btitle{On the arrangement of cells in planar {STIT} and {P}oisson line tessellations}.
\bjournal{Methodol. Comput. Appl. Probab.}
\bvolume{15}
\bpages{643--654}.
\bid{issn={1387-5841}, mr={3085884}}
\end{barticle}
\bptok{imsref}%
\endbibitem

\bibitem{Ross}
\begin{bbook}[auto:STB|2014/02/12|14:17:21]
\bauthor{\bsnm{Ross},~\bfnm{S.~M.}\binits{S.~M.}}
(\byear{2003}).
\btitle{Introduction to Probability Models},
\bedition{8th} ed.
\bpublisher{Academic Press},
\blocation{Amsterdam}.
\end{bbook}
\bptok{imsref}%
\endbibitem

\bibitem{SW}
\begin{bbook}[mr]
\bauthor{\bsnm{Schneider},~\bfnm{Rolf}\binits{R.}} \AND
\bauthor{\bsnm{Weil},~\bfnm{Wolfgang}\binits{W.}}
(\byear{2008}).
\btitle{Stochastic and Integral Geometry}.
\bpublisher{Springer},
\blocation{Berlin}.
\bid{doi={10.1007/978-3-540-78859-1}, mr={2455326}}
\end{bbook}
\bptok{imsref}%
\endbibitem

\bibitem{ST2}
\begin{barticle}[mr]
\bauthor{\bsnm{Schreiber},~\bfnm{Tomasz}\binits{T.}} \AND
\bauthor{\bsnm{Th{\"a}le},~\bfnm{Christoph}\binits{C.}}
(\byear{2010}).
\btitle{Second-order properties and central limit theory for the vertex process
of iteration infinitely divisible and iteration stable random tessellations in the plane}.
\bjournal{Adv. in Appl. Probab.}
\bvolume{42}
\bpages{913--935}.
\bid{doi={10.1239/aap/1293113144}, issn={0001-8678}, mr={2796670}}
\end{barticle}
\bptok{imsref}%
\endbibitem

\bibitem{ST3}
\begin{barticle}[mr]
\bauthor{\bsnm{Schreiber},~\bfnm{Tomasz}\binits{T.}} \AND
\bauthor{\bsnm{Th{\"a}le},~\bfnm{Christoph}\binits{C.}}
(\byear{2011}).
\btitle{Intrinsic volumes of the maximal polytope process in higher dimensional {STIT} tessellations}.
\bjournal{Stochastic Process. Appl.}
\bvolume{121}
\bpages{989--1012}.
\bid{doi={10.1016/j.spa.2011.01.001}, issn={0304-4149}, mr={2775104}}
\end{barticle}
\bptok{imsref}%
\endbibitem

\bibitem{ST7}
\begin{barticle}[mr]
\bauthor{\bsnm{Schreiber},~\bfnm{Tomasz}\binits{T.}} \AND
\bauthor{\bsnm{Th{\"a}le},~\bfnm{Christoph}\binits{C.}}
(\byear{2012}).
\btitle{Second-order theory for iteration stable tessellations}.
\bjournal{Probab. Math. Statist.}
\bvolume{32}
\bpages{281--300}.
\bid{issn={0208-4147}, mr={3021459}}
\bptnote{check year}%
\end{barticle}
\bptok{imsref}%
\endbibitem

\bibitem{ST4}
\begin{barticle}[mr]
\bauthor{\bsnm{Schreiber},~\bfnm{Tomasz}\binits{T.}} \AND
\bauthor{\bsnm{Th{\"a}le},~\bfnm{Christoph}\binits{C.}}
(\byear{2013}).
\btitle{Shape-driven nested {M}arkov tessellations}.
\bjournal{Stochastics}
\bvolume{85}
\bpages{510--531}.
\bid{doi={10.1080/17442508.2011.654344}, issn={1744-2508}, mr={3176472}}
\end{barticle}
\bptok{imsref}%
\endbibitem

\bibitem{ST5}
\begin{barticle}[mr]
\bauthor{\bsnm{Schreiber},~\bfnm{Tomasz}\binits{T.}} \AND
\bauthor{\bsnm{Th{\"a}le},~\bfnm{Christoph}\binits{C.}}
(\byear{2013}).
\btitle{Limit theorems for iteration stable tessellations}.
\bjournal{Ann. Probab.}
\bvolume{41}
\bpages{2261--2278}.
\bid{doi={10.1214/11-AOP718}, issn={0091-1798}, mr={3098072}}
\end{barticle}
\bptok{imsref}%
\endbibitem

\bibitem{ST6}
\begin{barticle}[mr]
\bauthor{\bsnm{Schreiber},~\bfnm{Tomasz}\binits{T.}} \AND
\bauthor{\bsnm{Th{\"a}le},~\bfnm{Christoph}\binits{C.}}
(\byear{2013}).
\btitle{Geometry of iteration stable tessellations: Connection with {P}oisson hyperplanes}.
\bjournal{Bernoulli}
\bvolume{19}
\bpages{1637--1654}.
\bid{doi={10.3150/12-BEJ424}, issn={1350-7265}, mr={3129028}}
\end{barticle}
\bptok{imsref}%
\endbibitem

\bibitem{SKM}
\begin{bbook}[auto:STB|2014/02/12|14:17:21]
\bauthor{\bsnm{Stoyan},~\bfnm{D.}\binits{D.}},
\bauthor{\bsnm{Kendall},~\bfnm{D.~G.}\binits{D.~G.}} \AND
\bauthor{\bsnm{Mecke},~\bfnm{J.}\binits{J.}}
(\byear{1995}).
\btitle{Stochastic Geometry},
\bedition{2nd} ed.
\bpublisher{Wiley},
\blocation{Chichester}.
\end{bbook}
\bptok{imsref}%
\endbibitem

\bibitem{TWN}
\begin{barticle}[mr]
\bauthor{\bsnm{Th{\"a}le},~\bfnm{Christoph}\binits{C.}},
\bauthor{\bsnm{Weiss},~\bfnm{Viola}\binits{V.}} \AND
\bauthor{\bsnm{Nagel},~\bfnm{Werner}\binits{W.}}
(\byear{2012}).
\btitle{Spatial {STIT} tessellations: Distributional results for {I}-segments}.
\bjournal{Adv. in Appl. Probab.}
\bvolume{44}
\bpages{635--654}.
\bid{doi={10.1239/aap/1346955258}, issn={0001-8678}, mr={3024603}}
\end{barticle}
\bptok{imsref}%
\endbibitem

\bibitem{Varadhan}
\begin{bincollection}[mr]
\bauthor{\bsnm{Varadhan},~\bfnm{Srinivasa~R.~S.}\binits{S.~R.~S.}}
(\byear{1988}).
\btitle{Large deviations and applications}.
In \bbooktitle{\'{E}cole D'\'{E}t\'e de {P}robabilit\'es de {S}aint-{F}lour {XV}--{XVII}, 1985--87}.
\bseries{Lecture Notes in Math.}
\bvolume{1362}
\bpages{1--49}.
\bpublisher{Springer},
\blocation{Berlin}.
\bid{doi={10.1007/BFb0086178}, mr={0983371}}
\bptnote{check year}%
\end{bincollection}
\bptok{imsref}%
\endbibitem

\end{thebibliography}
\end{document}